\documentclass[reqno]{amsart}
\usepackage[T1]{fontenc}
\usepackage{amssymb}
\usepackage{amsthm}
\usepackage{xcolor}
\usepackage{mathtools}
\usepackage{thmtools}
\usepackage[breaklinks,colorlinks,linkcolor=blue,citecolor=teal]{hyperref}

\newcommand{\psp}[1]{\sp{(#1)}} % Parenthesized superscript
\newcommand{\norm}[2][y]{\if#1y\left\fi\lVert#2\if#1y\right\fi\rVert} % Norm
\newcommand{\abs}[2][y]{\if#1y\left\fi\lvert#2\if#1y\right\fi\rvert} % Absolute value
\newcommand{\Real}{\mathbb{R}}
\renewcommand{\natural}{\mathbb{N}} % Natural numbers
\newcommand{\LL}{\mathrm{L}}
\newcommand{\HH}{\mathrm{H}}
\newcommand{\CC}{\mathrm{C}}
\newcommand{\dd}[1][y]{\if#1y\,\fi{\mathrm d}} % Differential
\renewcommand{\div}{\mathrm{div}} % Divergence
\newcommand{\poly}{\Pi}
\DeclareMathOperator{\proj}{proj}
\DeclareMathOperator{\Sym}{Sym}
\DeclareMathOperator{\Skew}{Skew}
\newcommand{\sphere}{\mathbb{S}}

\declaretheorem[name=Theorem,numberwithin=section]{thm}
\declaretheorem[name=Lemma,sibling=thm]{lemma}
\declaretheorem[name=Proposition,sibling=thm]{proposition}
\declaretheorem[name=Corollary,sibling=thm]{corollary}
\declaretheorem[name=Remark,sibling=thm,style=remark]{remark}
\declaretheorem[name=Definition,sibling=thm,style=definition]{definition}

\numberwithin{equation}{section}

\begin{document}

\title[Orthogonal polynomial projection error in Dunkl--Sobolev norms]{Orthogonal polynomial projection error in Dunkl--Sobolev norms in the ball}

\author{Gonzalo A. Benavides}
\address{CI\textsuperscript{2}MA and Departamento de Ingenier\'ia Matem\'atica, Universidad de Concepci\'on, Casilla 160-C, Concepci\'on, Chile}
\email{gobenavides@udec.cl}

\author{Leonardo E. Figueroa}
\address[Corresponding author]{CI\textsuperscript{2}MA and Departamento de Ingenier\'ia Matem\'atica, Universidad de Concepci\'on, Casilla 160-C, Concepci\'on, Chile}
\email{lfiguero@ing-mat.udec.cl}

\begin{abstract}
We study approximation properties of weighted $\LL^2$-orthogonal projectors onto spaces of polynomials of bounded degree in the Euclidean unit ball, where the weight is of the reflection-invariant form $(1-\norm{x}^2)^\alpha \prod_{i=1}^d \abs{x_i}^{\gamma_i}$, $\alpha, \gamma_1, \dotsc, \gamma_d > -1$.
Said properties are measured in Dunkl--Sobolev-type norms in which the same weighted $\LL^2$ norm is used to control all the involved differential-difference Dunkl operators, such as those appearing in the Sturm--Liouville characterization of similarly weighted $\LL^2$-orthogonal polynomials, as opposed to the partial derivatives of Sobolev-type norms.
The method of proof relies on spaces instead of bases of orthogonal polynomials, which greatly simplifies the exposition.
\end{abstract}
\maketitle

\noindent
{\bf Key words}: Orthogonal projection, Unit ball, Orthogonal polynomials, Reflection-invariant weight, Dunkl operator.

\smallskip\noindent
{\bf Mathematics subject classifications (2010)}: 41A25, 41A10, 46E35, 33C52.

\thanks{G.A.~Benavides and L.E.~Figueroa acknowledge partial support from CONICYT-Chile through project Fondecyt Regular 1181957.}

\section{Introduction}\label{sec:introduction}

Let $B^d$ denote the unit ball of $\Real^d$, $\alpha > -1$ and let the weight function $W_\alpha \colon B^d \to \Real$ be defined by $W_\alpha(x) = (1-\norm{x}^2)^\alpha$ with $\norm{\cdot}$ being the Euclidean norm.
Let $\LL^2_\alpha$ denote the weighted Lebesgue space $\LL^2(B^d, W_\alpha) := \{ W_\alpha^{-1/2} f \mid f \in \LL^2(B^d) \}$, whose natural squared norm is $\norm{u}_\alpha^2 := \int_{B^d} \abs{u}^2 W_\alpha$.
In \cite{Figueroa:2017a} one of the authors proved that the orthogonal projector $S^\alpha_N$ mapping $\LL^2_\alpha$ onto $\poly^d_N$ (the space of $d$-variate polynomials of degree less than or equal to $N$) satisfies the bound
\begin{equation}\label{old-result}
(\forall\,u\in\HH^l_\alpha) \quad \norm{u - S^\alpha_N(u)}_{\alpha;1}
\leq C \, N^{3/2-l} \norm{u}_{\alpha;l},
\end{equation}
where $C > 0$ depends on $\alpha$ and the integer $l \geq 1$ only, and, for every integer $m \geq 1$, $\HH^\alpha_m$ denotes the weighted Sobolev space whose natural squared norm is $\norm{u}_{\alpha;m} := \sum_{k=0}^m \norm{ \nabla^k u}_\alpha^2$ (here $\nabla^k$ is the $k$-fold gradient).

The purpose of this work is proving an analogue of \eqref{old-result} for a class of reflection-invariant weights involving, fittingly, differential-difference Dunkl operators \cite[Sec.~6.4]{DunklXu:2014} instead of partial derivatives.
In order to state this analogue we introduce now the rest of the minimal necessary notation.
Given $\alpha > -1$ and $\gamma=(\gamma_1,\dotsc,\gamma_d) \in (-1,\infty)^d$, let the weight function $W_{\alpha,\gamma} \colon B^d \to \Real$ be defined by
\begin{equation*}
W_{\alpha,\gamma}(x) := (1-\norm{x}^2)^\alpha \prod_{i=1}^d \abs{x_i}^{\gamma_i}.
\end{equation*}
We denote by $\LL^2_{\alpha,\gamma}$ the weighted Lebesgue space $\LL^2(B^d,W_{\alpha,\gamma})$, whose natural inner product and squared norm are $\langle u,v\rangle_{\alpha,\gamma}:=\int_{B^d} u \, v \, W_{\alpha,\gamma}$ and $\norm{u}_{\alpha,\gamma}^2 := \int_{B^d} \abs{u}^2 W_{\alpha,\gamma}$, respectively.
Let $S\sp{\alpha,\gamma}_N$ be the orthogonal projector mapping $\LL^2_{\alpha,\gamma}$ onto $\poly^d_N$.
For $j \in \{1, \dotsc, d\}$ the Dunkl operator $\mathcal{D}\psp{\gamma}_j$ is defined by
\begin{equation*}
\mathcal{D}\psp{\gamma}_j u(x) := \partial_j u(x) + \frac{\gamma_j}{2} \left( u(x) - u(x_1, \dotsc, \overbrace{-x_j}^{\smash{\text{$j$-th entry}}}, \dotsc, x_d) \right).
\end{equation*}
Given an integer $m \geq 0$, we define the \emph{Dunkl--Sobolev space} $\HH^m_{\alpha,\gamma}$ as the topological completion of $\CC^m(\overline{B^d})$ with respect to the norm $\norm{u}_{\alpha,\gamma;m} := \sum_{k=0}^m \norm{ (\mathcal{D}\psp{\gamma})^k u}_{\alpha,\gamma}^2$, where $(\mathcal{D}\psp{\gamma})^k$ is the $k$-fold Dunkl gradient constructed in terms of the Dunkl operators (we reintroduce the Dunkl operators and Dunkl--Sobolev spaces in their proper context in \eqref{parameter-raising-operator} and \autoref{def:DunklSobolevSpace}, respectively).
Our main result is
\begin{thm}\label{thm:main}
For all integers $1 \leq r \leq l$, $\alpha \in (-1,\infty)$ and $\gamma \in (-1,\infty)^d$, there exists $C = C(\alpha,\gamma,l,r) > 0$ such that
\begin{equation*}
(\forall \, u \in \HH^l_{\alpha,\gamma}) \quad \norm{u - S\sp{\alpha,\gamma}_N (u)}_{\alpha,\gamma;r} \leq C \, N^{-1/2 + 2r - l} \norm{u}_{\alpha,\gamma;l}.
\end{equation*}
\end{thm}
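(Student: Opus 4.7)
The plan is to adapt the spaces-based strategy of \cite{Figueroa:2017a} to the reflection-invariant Dunkl setting. Let $\mathcal{V}^d_n$ denote the $\langle\cdot,\cdot\rangle_{\alpha,\gamma}$-orthogonal complement of $\poly^d_{n-1}$ in $\poly^d_n$, and decompose $u = \sum_{n \geq 0} u_n$ with $u_n \in \mathcal{V}^d_n$; then $u - S\psp{\alpha,\gamma}_N u = \sum_{n > N} u_n$ is a tail of mutually $\langle\cdot,\cdot\rangle_{\alpha,\gamma}$-orthogonal pieces, and it suffices to control the $r$-fold Dunkl gradient of this tail in $\LL^2_{\alpha,\gamma}$ in terms of $\norm{u}_{\alpha,\gamma;l}$.

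The engine of the proof will be the parameter-raising calculus signalled by \eqref{parameter-raising-operator}, together with the Sturm--Liouville characterization of $\mathcal{V}^d_n(W_{\alpha,\gamma})$ as an eigenspace of a second-order differential-difference Dunkl operator with eigenvalues of order $n^2$. The operators $\mathcal{D}\psp{\gamma}_j$ lower polynomial degree by one, and iterating $k$ of them effectively shifts the weight parameter: $(\mathcal{D}\psp{\gamma})^k$ maps $\mathcal{V}^d_n(W_{\alpha,\gamma})$ into polynomials whose natural orthogonality lives in $W_{\alpha+k,\gamma}$. This yields the two-sided spectral estimate $\norm{(\mathcal{D}\psp{\gamma})^k u_n}_{\alpha+k,\gamma}^2 \asymp n^{2k} \norm{u_n}_{\alpha,\gamma}^2$ together with mutual $\langle\cdot,\cdot\rangle_{\alpha+k,\gamma}$-orthogonality of $\{(\mathcal{D}\psp{\gamma})^k u_n\}_n$, which, combined with the free one-sided comparison $\norm{\cdot}_{\alpha+k,\gamma} \leq \norm{\cdot}_{\alpha,\gamma}$ (valid on $B^d$ for $k \geq 0$), will let us bound $\sum_n n^{2l}\norm{u_n}_{\alpha,\gamma}^2 \lesssim \norm{u}_{\alpha,\gamma;l}^2$.

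Assembling these pieces, one obtains $\norm{u - S\psp{\alpha,\gamma}_N u}_{\alpha,\gamma}^2 \lesssim N^{-2l}\norm{u}_{\alpha,\gamma;l}^2$ as a warm-up ($r = 0$, not stated in the theorem), and more importantly, $\norm{(\mathcal{D}\psp{\gamma})^r(u - S\psp{\alpha,\gamma}_N u)}_{\alpha+r,\gamma}^2 \lesssim \sum_{n > N} n^{2r}\norm{u_n}_{\alpha,\gamma}^2 \lesssim N^{2(2r - l)}\norm{u}_{\alpha,\gamma;l}^2$ in the shifted weight. The final step is then to recover the sharper $W_{\alpha,\gamma}$-norm of the $r$-fold Dunkl gradient of the tail at the cost of a single factor $N^{-1/2}$, which is the origin of the $-1/2$ in the exponent of the main theorem; this will be achieved by a Markov--Bernstein-type inverse inequality applied per-component, combined with a careful summation that exploits the partial-orthogonality structure inherited from the parameter-raising.

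The main technical obstacle I anticipate is the parameter-raising calculus itself in the Dunkl setting: the reflection components of $\mathcal{D}\psp{\gamma}_j$ break some symmetries available for ordinary partial derivatives on the ball, so integration by parts against $W_{\alpha,\gamma}$, the precise form of the adjoint of $\mathcal{D}\psp{\gamma}_j$ in $\langle\cdot,\cdot\rangle_{\alpha,\gamma}$, and the spectrum of the associated Sturm--Liouville operator must all be worked out while carefully tracking the parity decomposition induced by the axis reflections. A secondary obstacle is calibrating the Markov--Bernstein loss to be exactly $N^{1/2}$ (rather than $N^{r/2}$) in the final downgrading step, uniformly in $\gamma$, which will require singling out a one-shot weight-comparison identity rather than iterating a step-by-step one.
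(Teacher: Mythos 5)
Your plan assembles the right ingredients (the Sturm--Liouville eigenstructure of $\mathcal{V}\sp{\alpha,\gamma}_n$, the parameter-raising/degree-lowering property of $\mathcal{D}\psp{\gamma}_j$, the single-degree comparison between $\norm{\cdot}_{\alpha,\gamma}$ and $\norm{\cdot}_{\alpha+1,\gamma}$, and a Markov-type inverse inequality), but the assembly has two genuine gaps. First, the ``two-sided spectral estimate'' $\norm[n]{(\mathcal{D}\psp{\gamma})^k u_n}_{\alpha+k,\gamma}^2 \asymp n^{2k}\norm{u_n}_{\alpha,\gamma}^2$ is false in the lower-bound direction for $d \geq 2$: testing the weak Sturm--Liouville identity with $p_n = q = u_n$ gives
\begin{equation*}
\norm[n]{\mathcal{D}\psp{\gamma} u_n}_{\alpha+1,\gamma}^2 = n(n+2\lambda^{\alpha,\gamma})\norm{u_n}_{\alpha,\gamma}^2 - \sum_{1\le i<j\le d}\norm[n]{\mathcal{D}\psp{\gamma}_{i,j}u_n}_{\alpha,\gamma}^2 + (\text{reflection terms}),
\end{equation*}
and the angular terms can absorb essentially all of $n^{2}\norm{u_n}_{\alpha,\gamma}^2$ (e.g.\ for elements with dominant spherical-harmonic content, where $\norm[n]{\mathcal{D}\psp{\gamma}u_n}_{\alpha+1,\gamma}^2$ is only of order $n\norm{u_n}_{\alpha,\gamma}^2$), so only the upper bound survives. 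Consequently $\sum_n n^{2l}\norm{u_n}_{\alpha,\gamma}^2 \lesssim \norm{u}_{\alpha,\gamma;l}^2$ cannot be obtained your way; the paper derives it (\autoref{lem:regularitySummability}) from the identity $\proj\sp{\alpha,\gamma}_n(\mathcal{L}\sp{\alpha,\gamma}u) = n(n+2\lambda^{\alpha,\gamma})\proj\sp{\alpha,\gamma}_n(u)$ and the boundedness of $\mathcal{L}\sp{\alpha,\gamma}\colon\HH^m_{\alpha,\gamma}\to\HH^{m-2}_{\alpha,\gamma}$, which sidesteps any lower bound on individual Dunkl gradients.

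Second, and more seriously, the final ``downgrade'' from $\norm{\cdot}_{\alpha+r,\gamma}$ back to $\norm{\cdot}_{\alpha,\gamma}$ over the whole tail is exactly where your outline has no working mechanism. The pieces $(\mathcal{D}\psp{\gamma})^r u_n$ lie in (tensor powers of) $\mathcal{V}\sp{\alpha+r,\gamma}_{n-r}$ and are mutually orthogonal only in $\LL^2_{\alpha+r,\gamma}$, not in $\LL^2_{\alpha,\gamma}$; the per-degree comparison costs a factor growing like $n^{r/2}$, not $n^{1/2}$ (and even establishing $n^{r/2}$ takes care, since $\mathcal{V}\sp{\alpha+r,\gamma}_k\not\subset\mathcal{V}\sp{\alpha+r-1,\gamma}_k$, so the one-step identity of \autoref{approx-ineq}\ref{extendedOrthogonality} does not simply iterate). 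Summing the tail by the triangle inequality then misses the stated power of $N$. Your intermediate exponents are also internally inconsistent: your own chain gives $\sum_{n>N}n^{2r}\norm{u_n}^2_{\alpha,\gamma}\lesssim N^{2(r-l)}\norm{u}^2_{\alpha,\gamma;l}$, not $N^{2(2r-l)}$, and passing from the former to $N^{2(2r-l)-1}$ would require a factor $N^{2r-1}$, not $N^{-1}$. The paper's way out is a telescoping commutator identity: combining \autoref{pro:id-shift} and \autoref{pro:diff-shift}, the sum defining $\mathcal{D}\psp{\gamma}_j S\sp{\alpha,\gamma}_n(u) - S\sp{\alpha,\gamma}_n(\mathcal{D}\psp{\gamma}_j u)$ collapses to just two terms, $\proj\sp{\alpha+1,\gamma}_{n-1}\circ\proj\sp{\alpha,\gamma}_{n+1}(\mathcal{D}\psp{\gamma}_ju) - \proj\sp{\alpha+1,\gamma}_{n}\circ\proj\sp{\alpha,\gamma}_{n}(\mathcal{D}\psp{\gamma}_ju)$, each supported in a \emph{single} space $\mathcal{V}\sp{\alpha+1,\gamma}_k$, so the norm comparison is invoked exactly once and costs exactly a factor of order $\sqrt{n}$; combining this with the $\LL^2_{\alpha,\gamma}$ rate for $\mathcal{D}\psp{\gamma}_ju$ and then iterating with the Markov inequality yields $n^{2r-1/2-l}$. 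Without this (or an equivalent) cancellation over the tail, your approach does not reach the stated exponent.
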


This work builds upon a lineage of works which proved results analogous to \autoref{thm:main}, all of which correspond, in our notation, to cases with $\gamma = 0$, so the involved weights lack interior singularities and the Dunkl operators reduce to partial derivatives.
In \cite[Th.~2.2 and Th.~2.4]{CQ:1982} our main result was proved in dimension $d = 1$ when the $\alpha = -1/2$ (Chebyshev case) and when $\alpha = 0$ (Legendre case); see also the streamlined proofs for these cases at \cite[Ch.~5]{CHQZ-I}.
In \cite[Th,~2.6]{Guo:2000a}, the one-dimensional case was proved for general $\alpha$ (Gegenbauer case).
In \cite[Th.~2.6]{Xu:2018}, the one-dimensional case with general asymmetric $(1-x)^\alpha(1+x)^\beta$ weight (Jacobi case) was proved.
In \cite[Th.~3.11]{Figueroa:2017b}, \autoref{thm:main} was extended to dimension $d = 2$ for general $\alpha$.
Finally, in \cite[Th.~1.1]{Figueroa:2017a}, a new technique of proof, based on orthogonal polynomial \emph{spaces} instead of orthogonal polynomial \emph{bases} (thus circumventing the need for spectral differentiation formulas, which by \cite{Figueroa:2017b} had made the necessary algebraic manipulation very long in comparison) allowed for extending the result to arbitrary dimension for general $\alpha$.

In the $\gamma = 0$ cases cited above, the analogues of \autoref{thm:main} are results of provably non-optimal polynomial approximation with respect to the power on $N$, caused by the mismatch between the orthogonality that defines the projection operator $S\sp{\alpha,\gamma}_N$ ---which can be characterized as a generalized Fourier series truncation operator; cf.\ \eqref{spanning-consequence}--- and the Hilbert norm in which the error is measured (see the references provided in \cite[Sec.~1]{Figueroa:2017a} for optimal polynomial approximation results).
The same mismatch occurs in this work, so we expect \autoref{thm:main} to be non-optimal too; however, we cannot be sure because we are not aware of best approximation results for the general $\gamma$ case.

The study of approximation results involving weights such as $W_{\alpha,\gamma}$ is interesting, first, as an archetype of weights of interior singularities, as its highly symmetric form allows for sourcing useful results from the theory of reflection-invariant orthogonal polynomials \cite[Ch.~6 and Ch.~7]{DunklXu:2014}.
Secondly, there is an intimate connection between orthogonal polynomials in the ball with respect to $W_{\alpha,\gamma}$ and orthogonal polynomials in the simplex with respect to weights that are products of powers of signed distances to their faces \cite[Subsec.~8.2]{DunklXu:2014}; as this reference attests, when mapping orthogonal polynomials in the ball to orthogonal polynomials in the simplex, only the fully reflection-symmetric of the former participate, and for these the Dunkl operators reduce to partial derivatives.
Thirdly, we fully expect that the techniques and partial results introduced below, not least the adaptation of the orthogonal polynomials spaces-based techniques of \cite{Figueroa:2017a} to the present situation, will prove useful again in the pursuit of further approximation results.

Our main result involves weighted Dunkl--Sobolev spaces instead of the better understood weighted Sobolev spaces because it is in terms of the former that the contours of the argument in \cite{Figueroa:2017a} can be reproduced.
This is readily apparent because the characterization of $\LL^2_{\alpha,\gamma}$-orthogonal polynomials as eigenfunctions of Sturm--Liouville-type operators occurs in terms of Dunkl operators \cite[Th.~8.1.3]{DunklXu:2014}; said characterization is essential for our way of inferring approximation rates out of the regularity of the function being approximated.

We remark that $r = 0$ (i.e., approximation error measured in $\LL^2_{\alpha,\gamma}$) lies outside of the scope of \autoref{thm:main};
indeed, in this case, the provably optimal power on $N$ is $-l$ (cf.\ \autoref{cor:L2-truncation-error} below), outside of the pattern set by our main result.

The outline of this article is as follows.
We finish this introductory \autoref{sec:introduction} introducing some additional notation.
In \autoref{sec:Dunkl} we introduce the reflections, Dunkl operators and main Dunkl--Sobolev spaces that participate in this work.
In \autoref{sec:OPS} we introduce orthogonal polynomials spaces and their interaction with Dunkl operators and certain generalizations thereof.
In \autoref{sec:SL-approx} we put the differential-difference Sturm--Liouville operator the abovementioned orthogonal polynomial spaces are eigenspaces in a suitable weak form, prove preliminary approximation results upon it and prove our main result.
At last, in \autoref{sec:sharpness} we prove the sharpness of our main result for special values of its Dunkl--Sobolev regularity parameters and give a brief conclusion.

Given, $j \in \{1, \dotsc, d\}$, let $e_j \in \Real^d$ be Cartesian unit vector in the $j$-th direction; i.e., $(e_j)_i$ is $1$ if $i = j$ and $0$ otherwise.
We will denote the Euclidean norm by $\norm{\cdot}$.
We will denote the space of $d$-variate polynomials by $\poly^d$; we have already introduced its subspace $\poly^d_N$ consisting of polynomials of total degree less than or equal to $N$.
We will adopt the convention that, for $N < 0$, $\poly^d_N = \{ 0 \}$.

Given an open $\Omega \subset \Real^d$ we will denote the integral of functions $f \colon \Omega \to \Real$ with respect to the Lebesgue measure simply by $\int_\Omega f(x) \dd x$.
We will denote by $\sigma_{d-1}$ the surface measure of $\sphere^{d-1}$, the unit sphere of $\Real^d$ \cite[Ex.~3.10.82]{Bogachev}.
For all Lebesgue-integrable $f$,
\begin{equation}\label{polarIntegral}
\int_{\Real^d} f(x) \dd x = \int_0^\infty \int_{\sphere^{d-1}} f(r y) \, r^{d-1} \dd\sigma_{d-1}(y) \dd r.
\end{equation}

We denote by $\natural$ the set of strictly positive integers and $\natural_0 := \{0\} \cup \natural$.
Members of $[\natural_0]^d$ will be called multi-indices and for every multi-index $a \in [\natural_0]^d$, point $x \in \Real^d$ and regular enough real-valued function $f$ defined on some open set of $\Real^d$ we shall write $\abs{a} = \sum_{i=1}^d a_i$, $x^a = \prod_{i=1}^d x_i^{a_i}$ and $\partial_a f = \partial^{\abs{a}} f/(\partial x_1^{a_1} \dotsm \partial x_d^{a_d})$.

Setting $a_i = 1$, $p_i = 2$, $t_1 = 0$, $t_2 = 1$, $\alpha_i = \gamma_i + 1$ and $f(u) = (1-u)^\alpha$ in \cite[Th.~1.8.5]{AAR:1999} it readily follows that
\begin{equation}\label{one-norm}
\int_{B^d} W_{\alpha,\gamma}(x) \dd x = \frac{\prod_{i=1}^d \Gamma\left( \frac{\gamma_i+1}{2} \right)}{\Gamma\left( \frac{d+\sum_{i=1}^d \gamma_i}{2} \right)} \, \mathrm{B}\left(\frac{d}{2} + \frac{1}{2} \sum_{i=1}^d \gamma_i , \alpha+1 \right),
\end{equation}
where $\Gamma$ and $\mathrm{B}$ denote Gamma and Beta functions respectively;
these functions being finite for positive arguments, it follows that the constraints $\alpha > -1$ and $\gamma \in (-1,\infty)^d$ are precisely those that ensure that the above integral is finite.
As a consequence of \eqref{one-norm}, $\LL^\infty(B^d) \subset \LL^2_{\alpha,\gamma}$.
In particular, every polynomial, being a bounded function in $B^d$, belongs to $\LL^2_{\alpha,\gamma}$.

We finish this introductory section noting that we mostly omit the dimension $d$ from the notation of e.g., function spaces, in order to avoid cluttering and because all of our arguments work independently of the dimension.

\section{Dunkl operators and weighted Dunkl--Sobolev spaces}\label{sec:Dunkl}

Given $j \in \{1, \dotsc, d\}$ let $\sigma_j \colon B^d \to B^d$ be the reflection defined by
\begin{equation}\label{reflection}
(\forall \, x \in B^d) \quad \sigma_j x := x - 2 \, x_j \, e_j;
\end{equation}
that is, $\sigma_j$ flips the sign of the $j$-th component of its argument.
The group generated by $\{\sigma_j \mid 1 \leq j \leq d\}$ with function composition as the group operation is (isomorphic to) the Coxeter group $\mathbb{Z}_2^d$ \cite[Sec.~7.5]{DunklXu:2014}.
Given a scalar-, vector- or tensor-valued function $f$ on $B^d$, we shall write $\sigma_j^* := f \circ \sigma_j$.
We will say that $f$ is $\sigma_j$-even (resp.\ $\sigma_j$-odd) if $\sigma_j^* f = f$ (resp.\ $\sigma_j^* f = -f$) almost everywhere.
On defining
\begin{equation}\label{sym-and-skew}
\Sym_j(f) := \frac{f + \sigma_j^* f}{2} \qquad\text{and}\qquad \Skew_j(f) := \frac{f - \sigma_j^* f}{2},
\end{equation}
every such $f$ admits
\begin{equation}\label{parity-decomposition}
f = \Sym_j(f) + \Sym_j(f)
\end{equation}
as its unique decomposition into a $\sigma_j$-even and a $\sigma_j$-odd part.
For every $i, j \in \{1, \dotsc, d\}$, $\sigma_i$ and $\sigma_j$ commute.
Therefore, so do the operator pairs $(\sigma_i^*, \sigma_j^*)$, $(\Sym_i, \Sym_j)$ and $(\Sym_i, \Skew_j)$.
It follows that
\begin{equation}\label{twofold-parity-decomposition}
f = \Sym_i(\Sym_j(f)) + \Sym_i(\Skew_j(f)) + \Skew_i(\Sym_j(f)) + \Skew_i(\Skew_j(f)).
\end{equation}
is the only decomposition of $f$ into all four combinations of $\sigma_i$- and $\sigma_j$-parity.
Following \cite[Def.~6.4.4]{DunklXu:2014}, we further introduce the operators $\rho_j$ by
\begin{equation}\label{rho}
\rho_j(f)(x) := \frac{f(x) - f(\sigma_j x)}{x_j}
= \frac{2 \, \Skew_j(f)(x)}{x_j},
\end{equation}
where, whenever $x_j = 0$, the ratio must be interpreted as the corresponding limit; namely, $2 \, \partial_j f(x)$.
The following variant of Hadamard's lemma (cf.\ \cite[Sec.~3.20]{Petrovski:1966}) encapsulates the properties of the $\rho_j$ operators we shall need later.

\begin{proposition}\label{pro:Hadamard}
Let $j \in \{1, \dotsc, d\}$ and $f \in \CC^r(\overline{B^d})$, $r \geq 1$.
Then, $\rho_j(f) \in \CC^{r-1}(\overline{B^d})$ and, for all multi-indices $a$ with $0 \leq \abs{a} \leq r-1$,
\begin{equation}\label{HadamardFactorBound}
\norm{\partial_a \rho_j(f)}_{\infty} \leq 2 \norm{\partial_a \partial_j f}_{\infty}.
\end{equation}
If $f$ happens to be a polynomial of degree $n$, $\rho_j(f)$ is also a polynomial of degree at most $n-1$.
\begin{proof}
Throughout this proof, for all $z \in \overline{B^d}$ we set $z' = (z_1, \dotsc, z_{d-1})$ and $z'' = (z_1, \dotsc, z_{d-2})$ so that $z = (z', z_d) = (z'', z_{d-1}, z_d)$.
Also, given a function $h \colon \overline{B^d} \to \Real$ we denote its modulus of continuity by $\omega(\,\cdot\,;h)$; that is, for all $t \in [0, \infty]$, $\omega(t;h) := \sup\left\{ \abs{h(x) - h(y)} \mid x,y \in \overline{B^d}, \ \abs{x-y}\leq t \right\}$.
We also assume, without loss of generality, that $j = d$.

Given $k \in \natural_0$ let the integral operator $H_k$ be defined by
\begin{equation}\label{HadamardAuxiliaryOperator}
H_k(h)(x) := \int_{-1}^1 s^k \, h(x', s\,x_d) \dd s.
\end{equation}
First, let us note that
\begin{equation}\label{proto-Hadamard-continuity}
(\forall\,h \in \CC(\overline{B^d})) \quad H_k(h) \in \CC(\overline{B^d}).
\end{equation}
Indeed, let $h \in \CC(\overline{B^d})$.
Then, for all $x, y \in \overline{B^d}$,
\begin{equation*}
\abs{H_k(h)(x) - H_k(h)(y)}
\leq \int_{-1}^1 \abs{ h(x', s\,x_d) - h(y', s\,x_d) } \dd s
\leq 2 \, \omega(\abs{x-y}; h).
\end{equation*}
Thus, $0 \leq \omega(\,\cdot\,;H_k(h)) \leq 2 \, \omega(\,\cdot\,;h)$ so $H_k(h)$ inherits the uniform continuity of $h$, which, in turn, stems from the fact that $\overline{B^d}$ is compact.
Also, directly from the definition \eqref{HadamardAuxiliaryOperator},
\begin{equation}\label{HAOONB}
(\forall\,h \in \CC(\overline{B^d})) \quad \norm{H_k(h)}_\infty \leq 2 \norm{h}_\infty.
\end{equation}
Next, we note that, as a consequence of the Fundamental Theorem of Calculus, for all $i \in \{1, \dotsc, d\}$,
\begin{equation}\label{FDUC}
(\forall\,h\in\CC^1(\overline{B^d})) \quad \abs{\frac{h(x+\eta\,e_i) - h(x)}{\eta} - \partial_i h(x)} \leq \omega(\abs{\eta}; \partial_i h).
\end{equation}
Further, we affirm that
\begin{equation}\label{proto-Hadamard-differentiability}
(\forall\,h \in \CC^1(\overline{B^d})) \quad \partial_i H_k(h)
= \begin{cases} H_k(\partial_i h) & \text{if } i \neq d,\\ H_{k+1}(\partial_d h) & \text{if } i = d. \end{cases}
\end{equation}
Indeed, let $h \in \CC^1(\overline{B^d})$.
Let $i \in \{1, \dotsc, d-1\}$; without loss of generality we can assume that $i = d-1$.
Then,
\begin{multline*}
\abs{\frac{H_k(h)(x+\eta e_{d-1}) - H_k(h)(x)}{\eta} - H_k(\partial_{d-1} h)(x)}\\
\leq \int_{-1}^1 \abs{\frac{h(x'', x_{d-1} + \eta, s\,x_d) - h(x'', x_{d-1}, s\,x_d)}{\eta} - \partial_{d-1} h(x', s\,x_d)} \dd s
\xrightarrow{\eta \to 0} 0
\end{multline*}
because, per \eqref{FDUC}, the last integrand tends to $0$ as $\eta$ tends to $0$ uniformly with respect to $s$.
If $i = d$,
\begin{multline*}
\abs{\frac{H_k(h)(x+\eta e_d) - H_k(h)(x)}{\eta} - H_{k+1}(\partial_d h)(x)}\\
\leq \int_{-1}^1 \abs{\frac{h(x', s(x_d + \eta)) - h(x', s\,x_d)}{\eta} - s \, \partial_d h(x', s\,x_d)} \dd s\\
\leq \int_{-1}^1 \abs{\frac{h(x', s \, x_d + s\, \eta) - h(x', s\,x_d)}{s\,\eta} - \partial_d h(x', s\,x_d)} \dd s \xrightarrow{\eta \to 0} 0,
\end{multline*}
again by \eqref{FDUC} and the fact that $\abs{s\,\eta} \leq \abs{\eta}$.
Thus we have justified \eqref{proto-Hadamard-differentiability}.

Let $f \in \CC^r(\overline{B^d})$.
Then, $\rho_d(f) = H_0(\partial_d f)$.
Indeed, if $x_d = 0$, $\rho_d(f)(x) = 2\,\partial_d f(x)$ and $H_0(\partial_d f)(x)$ obviously coincide.
If $x_d \neq 0$, by the Fundamental Theorem of Calculus and the definition in \eqref{rho},
\begin{equation*}
\rho_d(f)(x) = \frac{1}{x_d} \int_{-x_d}^{x_d} \partial_d f(x', t) \dd t
= \int_{-1}^1 \partial_d f(x', s \, x_d) \dd s
= H_0(\partial_d f)(x).
\end{equation*}
With $\rho_d f$ characterized in this way, its membership in $\CC^{r-1}(\overline{B^d})$ and the bound \eqref{HadamardFactorBound} stem from \eqref{proto-Hadamard-continuity}, \eqref{HAOONB} and \eqref{proto-Hadamard-differentiability}.

Let us note that if $h$ happens to be the monomial $h(x) = \prod_{i=1}^d x_i^{\alpha_i}$, $\alpha_1, \dotsc, \alpha_d \in \natural_0$, a direct computation shows that $H_0(h) = \frac{1-(-1)^{\alpha_d+1}}{\alpha_d+1} h$.
Thus, $H_0$ maps polynomials to polynomials of at most the same total degree.
Hence, if $f$ is a polynomial of total degree $n$, $\rho_d(f) = H_0(\partial_d f)$ is a polynomial of total degree at most $n-1$.
\end{proof}
\end{proposition}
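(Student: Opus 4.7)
The plan is to reduce everything to a single integral representation, namely
\begin{equation*}
\rho_j(f)(x) = \int_{-1}^{1} \partial_j f(x_1,\dotsc,x_{j-1}, s\,x_j, x_{j+1}, \dotsc, x_d) \dd s.
\end{equation*}
Assuming without loss of generality that $j = d$, this identity is an immediate consequence of the Fundamental Theorem of Calculus when $x_d \neq 0$ (write $f(x)-f(\sigma_d x) = \int_{-x_d}^{x_d} \partial_d f(x',t)\dd t$ and change variables $t = s\,x_d$), and it reduces to the tautology $2\,\partial_d f(x) = \int_{-1}^1 \partial_d f(x',0)\dd s$ when $x_d = 0$, matching the convention adopted in the definition \eqref{rho}. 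So the integral representation holds on all of $\overline{B^d}$ and in particular eliminates the apparent singularity at $x_d = 0$ by absorbing it into the parametrisation.

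Next I would introduce the auxiliary family of operators
\begin{equation*}
H_k(h)(x) := \int_{-1}^{1} s^k\, h(x', s\,x_d) \dd s,
\end{equation*}
so that $\rho_d(f) = H_0(\partial_d f)$. Two properties of $H_k$ suffice. First, $H_k$ maps $\CC(\overline{B^d})$ into itself with $\norm{H_k(h)}_\infty \leq 2 \norm{h}_\infty$: continuity transfers because $\overline{B^d}$ is compact, so $h$ is uniformly continuous, and the bound is trivial. Second, differentiation under the integral sign (justified by the standard $\varepsilon$--$\delta$ argument using uniform continuity of $\partial_i h$ on $\overline{B^d}$) yields
\begin{equation*}
\partial_i H_k(h) = \begin{cases} H_k(\partial_i h), & i \neq d,\\ H_{k+1}(\partial_d h), & i = d, \end{cases}
\end{equation*}
for $h \in \CC^1(\overline{B^d})$; the key point in the $i=d$ case is that the chain rule pulls out the factor $s$ that bumps the index. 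Iterating this, for any multi-index $a$ with $\abs{a} \leq r-1$, $\partial_a \rho_d(f) = \partial_a H_0(\partial_d f) = H_{a_d}(\partial_a \partial_d f)$, which lies in $\CC(\overline{B^d})$ and satisfies the claimed supremum bound \eqref{HadamardFactorBound}.

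For the polynomial assertion, it is enough to check $H_0$ on a monomial $x^\beta$: the integral evaluates to a numerical constant times $x^\beta$, so $H_0$ preserves polynomial degree. Therefore if $f$ has degree $n$, $\partial_d f$ has degree at most $n-1$ and $\rho_d(f) = H_0(\partial_d f)$ also has degree at most $n-1$.

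The only technical obstacle is the uniform-in-$s$ convergence needed to differentiate under the integral when $i = d$, because the increment $\eta$ gets scaled to $s\,\eta$ inside $h$; one handles this by writing the difference quotient in terms of $s\,\eta$ and invoking the uniform continuity of $\partial_d h$, noting that $\abs{s\,\eta} \leq \abs{\eta}$. Everything else is bookkeeping.
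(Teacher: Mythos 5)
Your proposal is correct and follows essentially the same route as the paper: the same auxiliary operators $H_k$, the same integral representation $\rho_d(f) = H_0(\partial_d f)$, the same differentiation-under-the-integral argument (including the $\abs{s\,\eta} \leq \abs{\eta}$ observation for the $i=d$ case), and the same monomial check for the polynomial assertion. The explicit iterated identity $\partial_a \rho_d(f) = H_{a_d}(\partial_a \partial_d f)$ is a nice way of making precise what the paper leaves implicit, but it is the same argument.
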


Given any $\gamma \in \Real^d$, the map that to each $e_j$ and $-e_j$, $j \in \{1, \dotsc, d\}$ associates $\gamma_j$ is $\mathbb{Z}_2^d$ invariant, so it is a multiplicity function in the sense of \cite[Def.~6.4.1]{DunklXu:2014}.
The Dunkl operators associated with (the multiplicity function induced by) $\gamma$ \cite[Def.~6.4.2]{DunklXu:2014} are
\begin{equation}\label{parameter-raising-operator}
(\forall\,j\in\{1,\dotsc,d\}) \quad \mathcal{D}\psp{\gamma}_j q(x)
:= \partial_j q(x) + \frac{\gamma_j}{2} \rho_j(q)(x)
\stackrel{\eqref{rho}}{=} \partial_j q(x)+\frac{\gamma_j}{2} \frac{q(x)-q(\sigma_j x)}{x_j}.
\end{equation}
Through \autoref{pro:Hadamard} the Dunkl operators inherit from the standard partial derivatives the inclusions
\begin{equation}\label{Dunkl-inclusions}
\mathcal{D}\psp{\gamma}_j \left( \CC^m(\overline{B^d}) \right) \subseteq \CC^{m-1}(\overline{B^d})
\quad\text{and}\quad
\mathcal{D}\psp{\gamma}_j \left( \poly^d_m \right) \subseteq \poly^d_{m-1}
\end{equation}
for $m \in \natural$ and $m \in \natural_0$, respectively.

The following commutation relations are particularizations of Prop.~6.4.3, Th.~6.4.9 and Prop.~6.4.10 \cite{DunklXu:2014}, respectively:
\begin{equation}\label{Dunkl-reflection-commutator}
\mathcal{D}\psp{\gamma}_j \sigma_i^* = \begin{cases} \sigma_i^* \mathcal{D}\psp{\gamma}_j & \text{if } i \neq j,\\ -\sigma_j^* \mathcal{D}\psp{\gamma}_j & \text{if } i = j, \end{cases}
\end{equation}
\begin{equation}\label{Dunkl-commutator}
\mathcal{D}\psp{\gamma}_i \mathcal{D}\psp{\gamma}_j = \mathcal{D}\psp{\gamma}_j \mathcal{D}\psp{\gamma}_i,
\end{equation}
\begin{equation}\label{Dunkl-xi-commutator}
\mathcal{D}\psp{\gamma}_j(x_i q) = \begin{cases} x_i \mathcal{D}\psp{\gamma}_j q & \text{if } i \neq j,\\ x_j \mathcal{D}\psp{\gamma}_j q + q + \gamma_j \sigma_j^* q & \text{if } i = j.\end{cases}
\end{equation}
Note that in \eqref{Dunkl-xi-commutator} and in the sequel we commit the common abuse of notation of denoting maps of the form $x \mapsto x_i q(x)$ simply as $x_i q$.
Some consequences of \eqref{Dunkl-reflection-commutator} are
\begin{equation}\label{Dunkl-sym-skew}
\mathcal{D}\psp{\gamma}_j \Sym_i = \begin{cases} \Sym_i \mathcal{D}\psp{\gamma}_j & \text{if } i \neq j, \\ \Skew_i \mathcal{D}\psp{\gamma}_j & \text{if } i = j \end{cases}
\quad\text{and}\quad
\mathcal{D}\psp{\gamma}_j \Skew_i = \begin{cases} \Skew_i \mathcal{D}\psp{\gamma}_j & \text{if } i \neq j, \\ \Sym_i \mathcal{D}\psp{\gamma}_j & \text{if } i = j. \end{cases}
\end{equation}
Also, as
\begin{equation}\label{reflection-xi-commutator}
x_j \sigma_i^* q = \begin{cases} \sigma_i^*(x_j q) & \text{if } i \neq j,\\ -\sigma_j^*(x_j q) & \text{if } i = j, \end{cases}
\end{equation}
we further have
\begin{equation}\label{xi-sym-skew}
x_j \Sym_i q = \begin{cases} \Sym_i(x_j q) & \text{if } i \neq j, \\ \Skew_i(x_j q) & \text{if } i = j \end{cases}
\quad\text{and}\quad
x_j \Skew_i q = \begin{cases} \Skew_i(x_j q) & \text{if } i \neq j, \\ \Sym_i(x_j q) & \text{if } i = j. \end{cases}
\end{equation}

Because of the commutation property \eqref{Dunkl-commutator}, we can unambiguously use the multi-index notation to express compositions of Dunkl operators; hence, given $a \in [\natural_0]^d$, we shall write $\mathcal{D}\psp{\gamma}_a := (\mathcal{D}\psp{\gamma}_1)^{a_1} \circ \dotsm \circ (\mathcal{D}\psp{\gamma}_d)^{a_d}$.
We can now compactly express the following consequence of \autoref{pro:Hadamard}:
For all multi-indices $a \in [\natural_0]^d$ and $f \in \CC^{\abs{a}}(\overline{B^d})$,
\begin{equation}\label{Dunkl-bounded-by-derivatives}
\norm{D\psp{\gamma}_a f}_\infty \leq \prod_{i=1}^d (1+\abs{\gamma_i})^{a_i} \, \norm{\partial_a f}_\infty.
\end{equation}

We define the Dunkl gradient by $\mathcal{D}\psp{\gamma} f := \sum_{j=1}^d \mathcal{D}\psp{\gamma}_j(f) \, e_j$.
Given $m \in \natural_0$ we define the Sobolev-type inner product $\langle \cdot, \cdot \rangle_{\alpha,\gamma;m} \colon \CC^m(\overline{B^d}) \times \CC^m(\overline{B^d}) \to \Real$ by
\begin{equation}\label{DunklSobolevInnerProduct}
(\forall \, p,q \in \CC^m(\overline{B^d})) \quad \langle p, q \rangle_{\alpha,\gamma;m}
:= \sum_{k=0}^m \left\langle (\mathcal{D}\psp{\gamma})^k p, (\mathcal{D}\psp{\gamma})^k q \right\rangle_{\alpha,\gamma},
\end{equation}
where $(\mathcal{D}\psp{\gamma})^k$ is the $k$-fold Dunkl gradient.
Using the multi-index notation, this inner product can also be expressed as $(p,q) \mapsto \sum_{k=0}^m \sum_{\abs{a}=k} \binom{k}{a} \langle \mathcal{D}\psp{\gamma}_a p, \mathcal{D}\psp{\gamma}_a q \rangle_{\alpha,\gamma}$ (here $\binom{k}{a} = \frac{k!}{a_1! \dotsm a_d!}$ is the number of times $\mathcal{D}\psp{\gamma}_a p$ with $\abs{a} = k$ appears in the $k$-dimensional array-valued $(\mathcal{D}\psp{\gamma})^k p$) and is of course bounded from above and below by positive-constant multiples of $(p,q) \mapsto \sum_{\abs{a} \leq m} \langle \mathcal{D}\psp{\gamma}_a p, \mathcal{D}\psp{\gamma}_a q \rangle_{\alpha,\gamma}$.

We define now in some detail the function spaces involved in our main result \autoref{thm:main}.

\begin{definition}\label{def:DunklSobolevSpace}
Given $m \in \natural_0$, we define $\HH^m_{\alpha,\gamma}$ as the topological completion of $(\CC^m(\overline{B^d}), \norm{\cdot}_{\alpha,\gamma;m})$.

That is, up to isometry, $\HH^m_{\alpha,\gamma}$ is the space of equivalence classes of Cauchy sequences of $(\CC^m(\overline{B^d}), \norm{\cdot}_{\alpha,\gamma;m})$ with respect to the equivalence relation $\sim$ defined by $(x_n)_{n\in\natural} \sim (y_n)_{n\in\natural} \iff \lim_{n\to\infty} \norm{x_n-y_n}_{\alpha,\gamma;m} = 0$, equipped with the metric $(x,y) \mapsto \lim_{n \to \infty} \norm{x_n - y_n}_{\alpha,\gamma;m}$, where $(x_n)_{n\in\natural}$ and $(y_n)_{n\in\natural}$ are any representatives of the equivalence classes $x$ and $y$, respectively, which makes it a complete metric space.
Identifying each $f \in \CC^m(\overline{B^d})$ with the equivalence class of the constant sequence $(f)_{n\in\natural}$, $\CC^m(\overline{B^d})$ is a dense subset of $\HH^m_{\alpha,\gamma}$ \cite[Th.~III.33.VII]{Kuratowski:1966}, \cite[Th.~4.3.19]{Engelking:1989}.

It is easily checked that the map $(x,y) \mapsto \lim_{n \to \infty} \langle x_n, y_n \rangle_{\alpha,\gamma;m}$, where again $(x_n)_{n\in\natural}$ and $(y_n)_{n\in\natural}$ are any representatives of the equivalence classes $x$ and $y$, respectively, is a well defined inner product that induces the above metric, whence $\HH^m_{\alpha,\gamma}$ is a Hilbert space.
We denote that inner product by $\langle \cdot, \cdot \rangle_{\alpha,\gamma;m}$ as well.
\end{definition}

\begin{proposition}\label{pro:polynomials-dense}
Polynomials are dense in $\HH^m_{\alpha,\gamma}$.
\begin{proof}
Let $f \in \HH^m_{\alpha,\gamma}$ and $\epsilon > 0$.
By the characterization of $\HH^m_{\alpha,\gamma}$ as a topological completion in \autoref{def:DunklSobolevSpace}, there exists $g \in \CC^m(\overline{B^d})$ such that $\norm{f-g}_{\alpha,\gamma;m} < \epsilon/2$.
Now, $g$ can be extended to a $\CC^m(\Real^d)$ function $\tilde g$ \cite{Whitney:1934d}, which, by smooth truncation if necessary, can be assumed to have its support contained in the ball $B(0,2)$.
By \cite[Cor.~3]{EvardJafari:1994d}, there exists a polynomial $p$ such that
\begin{equation*}
\sum_{\abs{a} \leq m} \sup_{B^d} \abs{\partial_a g - \partial_a p}
= \sum_{\abs{a} \leq m} \sup_{B^d} \abs{\partial_a \tilde g - \partial_a p} < \frac{\epsilon}{2 \, c_{d,m}},
\end{equation*}
where $c_{d,m} = \norm{1}_{\alpha,\gamma}^{1/2} \, \max_{\abs{\alpha} \leq m} \binom{\abs{a}}{a}^{1/2} \, \max_{\abs{\alpha} \leq m} \prod_{i=1}^d (1+\abs{\gamma_i})^{\alpha_i}$ (this constant is finite on account of \eqref{one-norm}).
Thus, by \eqref{Dunkl-bounded-by-derivatives} and the definition \eqref{DunklSobolevInnerProduct},
\begin{multline*}
	\norm{g-p}_{\alpha,\gamma;m}
	\leq \norm{1}_{\alpha,\gamma}^{1/2} \max_{\abs{\alpha} \leq m} \binom{\abs{a}}{a}^{1/2} \left( \sum_{\abs{\alpha} \leq m} \norm{D\psp{\gamma}_a g - D\psp{\gamma}_a p}_\infty^2 \right)^{1/2}\\
	\leq \norm{1}_{\alpha,\gamma}^{1/2} \max_{\abs{\alpha} \leq m} \binom{\abs{a}}{a}^{1/2} \max_{\abs{\alpha} \leq m} \prod_{i=1}^d (1+\abs{\gamma_i})^{\alpha_i} \sum_{\abs{\alpha} \leq m} \norm{\partial_a g - \partial_a p}_\infty
	< \frac{\epsilon}{2}.
\end{multline*}
\end{proof}
\end{proposition}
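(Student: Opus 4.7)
The plan is to reduce to the classical problem of simultaneous polynomial approximation in the $\CC^m$-norm and then absorb everything via the elementary estimates already at our disposal. The density of $\CC^m(\overline{B^d})$ in $\HH^m_{\alpha,\gamma}$ is built into \autoref{def:DunklSobolevSpace}, so, given $f\in\HH^m_{\alpha,\gamma}$ and $\epsilon>0$, I would first pick $g\in\CC^m(\overline{B^d})$ with $\norm{f-g}_{\alpha,\gamma;m}<\epsilon/2$ and then focus entirely on approximating $g$ by a polynomial in the Dunkl--Sobolev norm.

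Next I would show that, on $\CC^m(\overline{B^d})$, the $\norm{\cdot}_{\alpha,\gamma;m}$ norm is dominated by the standard $\CC^m$-norm up to a constant depending only on $d,m,\alpha,\gamma$. This is where the results already assembled in the paper do the work: the pointwise inequality \eqref{Dunkl-bounded-by-derivatives} bounds $\norm{\mathcal{D}\psp{\gamma}_a h}_\infty$ by $\norm{\partial_a h}_\infty$ times a factor depending only on $\gamma$, and the finiteness of $\norm{1}_{\alpha,\gamma}$ from \eqref{one-norm} gives the continuous inclusion $\LL^\infty(B^d)\hookrightarrow\LL^2_{\alpha,\gamma}$. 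Combining these with the multi-index expansion of $\langle\cdot,\cdot\rangle_{\alpha,\gamma;m}$ discussed right after \eqref{DunklSobolevInnerProduct} yields
\begin{equation*}
\norm{h}_{\alpha,\gamma;m} \leq c_{d,m,\alpha,\gamma} \sum_{\abs{a}\leq m}\norm{\partial_a h}_\infty \qquad \forall h\in\CC^m(\overline{B^d}).
\end{equation*}

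With this reduction in hand, it is enough to produce a polynomial $p$ such that $\sum_{\abs{a}\leq m}\norm{\partial_a(g-p)}_\infty$ is arbitrarily small. This is a classical $\CC^m$-approximation statement on the compact set $\overline{B^d}$; I would handle it in two steps. First, use the Whitney extension theorem to extend $g$ to some $\tilde g\in\CC^m(\Real^d)$, then multiply by a $\CC^\infty$ cut-off to ensure compact support without altering $\tilde g$ on $\overline{B^d}$. Second, apply a standard simultaneous polynomial approximation result for compactly supported $\CC^m$ functions (for instance the Evard--Jafari theorem, or a direct convolution-with-a-polynomial-kernel argument) to obtain a polynomial $p$ with $\sum_{\abs{a}\leq m}\sup_{B^d}\abs{\partial_a g-\partial_a p}<\epsilon/(2c_{d,m,\alpha,\gamma})$. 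The triangle inequality then gives $\norm{f-p}_{\alpha,\gamma;m}<\epsilon$.

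The only genuinely non-trivial ingredient is the simultaneous $\CC^m$-approximation of $g$ by polynomials, which is an off-the-shelf result; the main conceptual point, and the reason this proposition is not immediate, is precisely that Dunkl operators are non-local, so one must justify that replacing partial derivatives by Dunkl operators in the target norm does not break the standard approximation scheme. The bound \eqref{Dunkl-bounded-by-derivatives}, a direct corollary of the Hadamard-type \autoref{pro:Hadamard}, is exactly what removes this obstacle.
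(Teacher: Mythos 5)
Your proposal is correct and follows essentially the same route as the paper: reduce to a $\CC^m(\overline{B^d})$ function by the completion definition, dominate $\norm{\cdot}_{\alpha,\gamma;m}$ by the $\CC^m$-norm via \eqref{Dunkl-bounded-by-derivatives} and the finiteness of $\norm{1}_{\alpha,\gamma}$, and then invoke Whitney extension plus simultaneous polynomial approximation (Evard--Jafari) in the $\CC^m$-norm. The paper's proof is exactly this argument with the constant written out explicitly.
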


\begin{remark}\label{rem:no-distributional-Dunkl}
We define our Dunkl--Sobolev spaces as topological completions of strongly differentiable functions with respect to the chosen norm; that is, `H' spaces in the nomenclature of Meyers \& Serrin \cite{MS}.
One might also define Dunkl--Sobolev spaces intrinsically, as spaces of (classes of equivalence of) $\LL^2_{\alpha,\gamma}$ functions whose Dunkl operators up to a certain order still belong to $\LL^2_{\alpha,\gamma}$; i.e., `W' spaces in the nomenclature of \cite{MS}.
To the latter end distributional generalizations of the Dunkl operators (see, e.g., \cite[Th.~4.4]{Trimeche:2001}) might be required to properly define their action on non-differentiable functions.
However, we do not know if such `W' spaces would be appropriate substitutes for (perhaps even identical to) our `H' spaces.
\end{remark}

\section{Orthogonal polynomial spaces}\label{sec:OPS}

Let $\mathcal{V}\sp{\alpha,\gamma}_k$ be the space of orthogonal polynomials of degree $k$ with respect to the weight $W_{\alpha,\gamma}$; i.e.,
\begin{equation}\label{OPS}
\mathcal{V}\sp{\alpha,\gamma}_k := \left\{ p \in \poly^d_k \mid (\forall\,q \in \poly^d_{k-1})\ \langle p, q\rangle_{\alpha,\gamma} = 0 \right\}.
\end{equation}

If $k < 0$ we adopt the convention $\poly^d_k = \{0\}$ and so $\mathcal{V}\sp{\alpha,\gamma}_k = \{0\}$.
As $W_{\alpha,\gamma}$ is centrally symmetric, it transpires from \cite[Th.~3.3.11]{DunklXu:2014} that, for all $k \in \natural_0 = \{0, 1, 2, \dotsc\}$, there holds the following parity relation:
\begin{equation}\label{parity}
(\forall \, p_k \in \mathcal{V}\sp{\alpha,\gamma}_k)\ (\forall \, x \in B^d) \quad p_k(-x) = (-1)^k p_k(x).
\end{equation}
Let $\proj\sp{\alpha,\gamma}_k$ denote the orthogonal projection from $\LL^2_{\alpha,\gamma}$ onto $\mathcal{V}\sp{\alpha,\gamma}_k$.
From \cite[Th.~3.2.18]{DunklXu:2014}, $\poly^d_n = \bigoplus_{k=0}^n \mathcal{V}\sp{\alpha,\gamma}_k$ and $\LL^2_{\alpha,\gamma} = \bigoplus_{k=0}^\infty \mathcal{V}\sp{\alpha,\gamma}_k$, whence
\begin{equation}\label{spanning-consequence}
(\forall\,n\in\natural_0) \quad S\sp{\alpha,\gamma}_n = \sum_{k=0}^n \proj\sp{\alpha,\gamma}_k
\quad\text{and}\quad
(\forall\,u\in \LL^2_{\alpha,\gamma}) \ u = \sum_{k=0}^\infty \proj\sp{\alpha,\gamma}_k(u).
\end{equation}
We mention in passing that we will denote the entrywise application of $S\sp{\alpha,\gamma}_n$ to $\LL^2_{\alpha,\gamma}$ vectors and higher-order tensors by $S\sp{\alpha,\gamma}_n$ as well.
Parseval's identity takes the form
\begin{equation}\label{parseval-L2}
\left( \forall \, u \in \LL_{\alpha,\gamma}^2 \right) \quad \norm{u}_{\alpha,\gamma}^2 = \sum_{k = 0}^\infty \norm{ \proj\sp{\alpha,\gamma}_k (u) }_{\alpha,\gamma}^2.
\end{equation}

The following proposition, analogous to \cite[Prop.~3.1]{Figueroa:2017a}, collects relations between orthogonal polynomial spaces and projectors onto them that do not involve Dunkl operators.

\begin{proposition}\label{pro:id-shift}
Let $\alpha \in (-1,\infty)$ and $\gamma \in (-1,\infty)^d$.
\begin{enumerate}
\item\label{it:weighted-id-downshift} Let $p_k \in \mathcal{V}\sp{\alpha+1,\gamma}_k$.
Then, $(1-\norm{\cdot}^2) p_k \in \mathcal{V}\sp{\alpha,\gamma}_k \oplus \mathcal{V}\sp{\alpha,\gamma}_{k+2}$.
\item\label{it:unnamed} Let $q_k \in \mathcal{V}\sp{\alpha,\gamma}_k$.
Then, $q_k = \proj\sp{\alpha+1,\gamma}_{k-2}(q_k) + \proj\sp{\alpha+1,\gamma}_k(q_k)$.
\item\label{it:proto-id-shift} Let $u \in \LL^2_{\alpha,\gamma}$.
Then, $\proj\sp{\alpha+1,\gamma}_k(u) = \proj\sp{\alpha+1,\gamma}_k\left( \proj\sp{\alpha,\gamma}_k(u) + \proj\sp{\alpha,\gamma}_{k+2}(u) \right)$.
\item\label{it:id-shift} Let $u \in \LL^2_{\alpha,\gamma}$.
Then,
\begin{equation*}
\proj\sp{\alpha+1,\gamma}_k(u)
= \proj\sp{\alpha,\gamma}_k(u) + \proj\sp{\alpha+1,\gamma}_k \circ \proj\sp{\alpha,\gamma}_{k+2}(u) - \proj\sp{\alpha+1,\gamma}_{k-2} \circ \proj\sp{\alpha,\gamma}_k(u).
\end{equation*}
\end{enumerate}
\begin{proof}
Given $q \in \poly^d_{k-1}$, $\langle (1-\norm{\cdot}^2) p_k, q\rangle_{\alpha,\gamma} = \langle p_k , q \rangle_{\alpha+1,\gamma} = 0$ by definition \eqref{OPS}.
Also, by the parity relation \eqref{parity}, $(1-\norm{\cdot}^2) p_k \perp_{\alpha,\gamma} \mathcal{V}\sp{\alpha,\gamma}_{k+1}$.
Therefore part \ref{it:weighted-id-downshift} stems from \eqref{spanning-consequence}.
An analogous argument accounts for part \ref{it:unnamed}.
Part \ref{it:proto-id-shift} comes from the fact that given $p_k \in \mathcal{V}\sp{\alpha+1,\gamma}_k$,
\begin{multline*}
\langle \proj\sp{\alpha+1,\gamma}_k(u),p_k \rangle_{\alpha+1,\gamma} = \langle u , p_k \rangle_{\alpha+1,\gamma} = \langle u , (1-\norm{\cdot}^2) p_k \rangle_{\alpha,\gamma}\\
\stackrel{\text{\ref{it:weighted-id-downshift}}}{=} \langle \proj\sp{\alpha,\gamma}_k(u) + \proj\sp{\alpha,\gamma}_{k+2}(u), (1-\norm{\cdot}^2)p_k \rangle_{\alpha,\gamma} = \langle \proj\sp{\alpha,\gamma}_k(u) + \proj\sp{\alpha,\gamma}_{k+2}(u), p_k \rangle_{\alpha+1,\gamma}.
\end{multline*}
Part \ref{it:id-shift} is obtained from adding and substracting $\proj\sp{\alpha+1,\gamma}_{k-2}(\proj\sp{\alpha,\gamma}_k(u))$ to the right hand side of part \ref{it:proto-id-shift} and using part \ref{it:unnamed}.
\end{proof}
\end{proposition}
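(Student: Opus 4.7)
The plan is to establish the four parts in order, using as basic tools the weight identity $(1-\norm{x}^2) W_{\alpha,\gamma}(x) = W_{\alpha+1,\gamma}(x)$, the parity relation \eqref{parity}, and the $\LL^2_{\alpha,\gamma}$-orthogonal decomposition $\poly^d_n = \bigoplus_{j=0}^n \mathcal{V}\sp{\alpha,\gamma}_j$ recalled in \eqref{spanning-consequence}. Parts \ref{it:weighted-id-downshift} and \ref{it:unnamed} are the structural heart of the statement---they say that multiplication by $1-\norm{\cdot}^2$ connects the decompositions associated with $W_{\alpha,\gamma}$ and $W_{\alpha+1,\gamma}$ in a controlled way---while \ref{it:proto-id-shift} and \ref{it:id-shift} are algebraic consequences.

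For \ref{it:weighted-id-downshift}, given $p_k \in \mathcal{V}\sp{\alpha+1,\gamma}_k$, the polynomial $(1-\norm{\cdot}^2) p_k$ belongs to $\poly^d_{k+2} = \bigoplus_{j=0}^{k+2} \mathcal{V}\sp{\alpha,\gamma}_j$, so I only need to kill its components in $\mathcal{V}\sp{\alpha,\gamma}_j$ for $j \notin \{k,k+2\}$. For $j \leq k-1$, the weight identity gives $\langle(1-\norm{\cdot}^2)p_k, q\rangle_{\alpha,\gamma} = \langle p_k, q\rangle_{\alpha+1,\gamma} = 0$ for every $q \in \poly^d_{k-1} \supseteq \mathcal{V}\sp{\alpha,\gamma}_j$. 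The remaining case $j = k+1$ is handled by parity: by \eqref{parity}, $p_k$ and hence $(1-\norm{\cdot}^2)p_k$ have parity $(-1)^k$, while $\mathcal{V}\sp{\alpha,\gamma}_{k+1}$ has parity $(-1)^{k+1}$, so the central symmetry of $W_{\alpha,\gamma}$ forces the inner product to vanish. Part \ref{it:unnamed} is proved by the dual argument: in the expansion $q_k = \sum_{j=0}^k \proj\sp{\alpha+1,\gamma}_j(q_k)$, for $j \leq k-3$ and any $p_j \in \mathcal{V}\sp{\alpha+1,\gamma}_j$ the weight identity yields $\langle q_k, p_j\rangle_{\alpha+1,\gamma} = \langle q_k, (1-\norm{\cdot}^2)p_j\rangle_{\alpha,\gamma} = 0$ since $(1-\norm{\cdot}^2)p_j \in \poly^d_{k-1}$, and the $j = k-1$ term vanishes by parity.

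With these in hand, \ref{it:proto-id-shift} follows by testing $\proj\sp{\alpha+1,\gamma}_k(u)$ against an arbitrary $p_k \in \mathcal{V}\sp{\alpha+1,\gamma}_k$: the weight identity rewrites $\langle u, p_k\rangle_{\alpha+1,\gamma}$ as $\langle u, (1-\norm{\cdot}^2)p_k\rangle_{\alpha,\gamma}$, and part \ref{it:weighted-id-downshift} implies $(1-\norm{\cdot}^2)p_k \in \mathcal{V}\sp{\alpha,\gamma}_k \oplus \mathcal{V}\sp{\alpha,\gamma}_{k+2}$, so $u$ may be replaced inside that pairing by $\proj\sp{\alpha,\gamma}_k(u) + \proj\sp{\alpha,\gamma}_{k+2}(u)$ without changing its value; reversing the weight shift then gives the claim. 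For \ref{it:id-shift}, expand \ref{it:proto-id-shift} linearly and apply \ref{it:unnamed} with $q_k := \proj\sp{\alpha,\gamma}_k(u)$ to rewrite $\proj\sp{\alpha+1,\gamma}_k(\proj\sp{\alpha,\gamma}_k(u))$ as $\proj\sp{\alpha,\gamma}_k(u) - \proj\sp{\alpha+1,\gamma}_{k-2}(\proj\sp{\alpha,\gamma}_k(u))$, and substitute.

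No step is genuinely hard: the proposition amounts to careful bookkeeping about how the weight shift $\alpha \to \alpha+1$ reshuffles orthogonal complements. The only pitfall is the low-$k$ boundary cases, where $\poly^d_{k-2} = \{0\}$ and consequently $\mathcal{V}\sp{\alpha,\gamma}_{k-2} = \{0\}$ and $\proj\sp{\alpha+1,\gamma}_{k-2} = 0$, which must be applied consistently per the convention stated after \eqref{OPS}.
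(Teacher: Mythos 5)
Your proposal is correct and follows essentially the same route as the paper: the weight identity $(1-\norm{\cdot}^2)W_{\alpha,\gamma}=W_{\alpha+1,\gamma}$ plus the parity relation \eqref{parity} for parts \ref{it:weighted-id-downshift} and \ref{it:unnamed}, testing against $\mathcal{V}\sp{\alpha+1,\gamma}_k$ for part \ref{it:proto-id-shift}, and the add-and-subtract manipulation via part \ref{it:unnamed} for part \ref{it:id-shift}. You also correctly spell out the dual argument for part \ref{it:unnamed}, which the paper only sketches as ``analogous.''
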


\begin{proposition}\label{pro:flip}
Let $\alpha \in (-1,\infty)$ and $\gamma \in (-1,\infty)^d$.
\begin{enumerate}
\item\label{it:odd-has-zero-mean} Let $f \in \LL^2_{\alpha,\gamma}$ be $\sigma_j$-odd.
Then, $\int_{B^d} f(x) \, W_{\alpha,\gamma}(x) \dd x = 0$.
\item\label{it:V-flip-invariance} Given $k \in \natural_0$, $j \in \{1, \dotsc, d\}$ and $p_k \in \mathcal{V}\sp{\alpha,\gamma}_k$, $p_k \circ \sigma_j \in \mathcal{V}\sp{\alpha,\gamma}_k$ as well.
\end{enumerate}
\begin{proof}
Because of the invariance of the Lebesgue measure with respect to reflections, $\int_{B^d} f(x) \, W_{\alpha,\gamma}(x) \dd x = \int_{B^d} f(\sigma_j x) \, W_{\alpha,\gamma}(\sigma_j(x)) \dd x$.
As $W_{\alpha,\gamma}$ is $\sigma_j$-invariant, part \ref{it:odd-has-zero-mean} follows.

Part \ref{it:V-flip-invariance} is proven similarly, using additionally the fact that the composition with $\sigma_j$ preserves the degree of a polynomial.
\end{proof}
\end{proposition}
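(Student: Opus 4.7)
The plan is to prove both parts by the same device: the change of variables $x \mapsto \sigma_j x$, exploiting that $B^d$ is $\sigma_j$-invariant, that Lebesgue measure is preserved under $\sigma_j$ (a linear isometry with determinant $-1$), and that the weight $W_{\alpha,\gamma}$ is itself $\sigma_j$-invariant, as is visible from $\abs{x_i}$ and $\norm{x}$ being insensitive to flipping the sign of $x_j$.

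For part \ref{it:odd-has-zero-mean}, I would simply substitute $x \mapsto \sigma_j x$ in $\int_{B^d} f(x) W_{\alpha,\gamma}(x) \dd x$; the invariance of the domain, the Lebesgue measure and the weight reduces this to $\int_{B^d} f(\sigma_j x) W_{\alpha,\gamma}(x) \dd x$, and then the $\sigma_j$-odd hypothesis on $f$ turns the integrand into $-f(x) W_{\alpha,\gamma}(x)$. The integral therefore equals its own negative, so vanishes.

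For part \ref{it:V-flip-invariance}, I would first note that $\sigma_j$ being a linear bijection of $\Real^d$, composition with it preserves the space $\poly^d_n$ for every $n$; in particular $p_k \circ \sigma_j \in \poly^d_k$ and $q \circ \sigma_j \in \poly^d_{k-1}$ whenever $q \in \poly^d_{k-1}$. Then, for any such $q$, the same change of variables yields
\begin{equation*}
\langle p_k \circ \sigma_j, q \rangle_{\alpha,\gamma} = \int_{B^d} p_k(\sigma_j x) q(x) W_{\alpha,\gamma}(x) \dd x = \int_{B^d} p_k(y) q(\sigma_j y) W_{\alpha,\gamma}(y) \dd y = \langle p_k, q \circ \sigma_j \rangle_{\alpha,\gamma} = 0
\end{equation*}
by the orthogonality of $p_k$ to $\poly^d_{k-1}$. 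Hence $p_k \circ \sigma_j \in \mathcal{V}\sp{\alpha,\gamma}_k$, as required.

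No step offers genuine difficulty; both parts are immediate consequences of the $\sigma_j$-symmetry of the setup. An alternative route for part \ref{it:V-flip-invariance} would be to decompose $p_k = \Sym_j(p_k) + \Skew_j(p_k)$ and then deduce from part \ref{it:odd-has-zero-mean} (applied to products against a suitably $\sigma_j$-parity-split $q \in \poly^d_{k-1}$) that both summands belong to $\mathcal{V}\sp{\alpha,\gamma}_k$ separately, so that $p_k \circ \sigma_j = \Sym_j(p_k) - \Skew_j(p_k)$ does too. I nonetheless prefer the direct change-of-variables argument for its brevity.
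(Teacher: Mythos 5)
Your proof is correct and follows essentially the same route as the paper's: the change of variables $x \mapsto \sigma_j x$ combined with the $\sigma_j$-invariance of $B^d$, the Lebesgue measure and $W_{\alpha,\gamma}$, plus, for part \ref{it:V-flip-invariance}, the degree-preservation of composition with $\sigma_j$. Your write-up merely makes explicit the details the paper leaves to the reader in its ``proven similarly'' remark.
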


Given any $\alpha \in \Real$ and $\gamma \in \Real^d$ we introduce the differential-difference operators $\mathcal{D}\psp{\alpha,\gamma;\star}_j$, $j \in \{1, \dotsc, d\}$, by
\begin{multline}\label{parameter-lowering-operator}
\mathcal{D}\psp{\alpha,\gamma;\star}_j q(x) := -(1-\norm{x}^2)^{-\alpha} \, \mathcal{D}\psp{\gamma}_j \! \left( (1-\norm{x}^2)^{\alpha+1} q(x) \right)\\
= -(1-\norm{x}^2) \, \mathcal{D}\psp{\gamma}_j q(x) + 2(\alpha+1)x_j q(x).
\end{multline}
From the inclusions in \eqref{Dunkl-inclusions} they inherit
\begin{equation}\label{Dunkl-star-inclusions}
\mathcal{D}\psp{\alpha,\gamma;\star}_j \left( \CC^m(\overline{B^d}) \right) \subseteq \CC^{m-1}(\overline{B^d})
\quad\text{and}\quad
\mathcal{D}\psp{\alpha,\gamma;\star}_j \left( \poly^d_m \right) \subseteq \poly^d_{m+1}
\end{equation}
for $m \in \natural$ and $m \in \natural_0$, respectively.
Also, from \eqref{Dunkl-sym-skew} and \eqref{xi-sym-skew},
\begin{multline}\label{Dunkl-star-sym-skew}
\mathcal{D}\psp{\alpha,\gamma;\star}_j \Sym_i = \begin{cases} \Sym_i \mathcal{D}\psp{\alpha,\gamma;\star}_j & \text{if } i \neq j, \\ \Skew_i \mathcal{D}\psp{\alpha,\gamma;\star}_j & \text{if } i = j \end{cases}
\quad\text{and}\\
\mathcal{D}\psp{\alpha,\gamma;\star}_j \Skew_i = \begin{cases} \Skew_i \mathcal{D}\psp{\alpha,\gamma;\star}_j & \text{if } i \neq j, \\ \Sym_i \mathcal{D}\psp{\alpha,\gamma;\star}_j & \text{if } i = j. \end{cases}
\end{multline}
As its notation suggests, the $\mathcal{D}\psp{\alpha,\gamma;\star}_j$ operator is indeed adjoint to the Dunkl operator $\mathcal{D}\psp{\gamma}_j$, to the extent allowed by the first part of the following proposition, analogous to \cite[Prop.~3.2]{Figueroa:2017a}, that also goes on to show that $\mathcal{D}\psp{\alpha,\gamma;\star}_j$ is a parameter-lowering and degree-raising operator, that $\mathcal{D}\psp{\gamma}_j$ is a parameter-raising and degree-lowering operator and a useful commutation relation between projections onto orthogonal polynomials spaces and a Dunkl operator.

\begin{proposition}\label{pro:diff-shift}
Let $\alpha \in (-1,\infty)$, $\gamma \in (-1,\infty)^d$ and $j \in \{1,\dotsc,d\}$.
\begin{enumerate}
\item\label{it:adjoint-relation} Let $p, q \in \CC^1(\overline{B^d})$.
Then, $\langle \mathcal{D}\psp{\gamma}_j p,q\rangle_{\alpha+1,\gamma}=\langle p,\mathcal{D}\psp{\alpha,\gamma;\star}_j q \rangle_{\alpha,\gamma}$.
\item\label{it:parameter-lowering} Let $r_k \in \mathcal{V}_{k}^{\alpha+1,\gamma}$.
Then, $\mathcal{D}\psp{\alpha,\gamma;\star}_j r_k \in \mathcal{V}_{k+1}^{\alpha,\gamma}$.
\item\label{it:parameter-raising} Let $p_k \in \mathcal V_k^{\alpha,\gamma}$. Then, $\mathcal{D}\psp{\gamma}_j p_k \in \mathcal V_{k-1}^{\alpha+1,\gamma}$.
\item\label{diff-shift} Let $u \in \CC^1(\overline{B^d})$.
Then, $\mathcal{D}\psp{\gamma}_j\proj\sp{\alpha,\gamma}_k(u) = \proj\sp{\alpha+1,\gamma}_{k-1}(\mathcal{D}\psp{\gamma}_j u)$.
\end{enumerate}
\begin{proof}
As both $\mathcal{D}\psp{\gamma}_j$ and $\mathcal{D}\psp{\alpha,\gamma;\star}_j$ flip $\sigma_j$-symmetry into $\sigma_j$-antisymmetry and vice versa (cf.\ \eqref{Dunkl-sym-skew} and \eqref{Dunkl-star-sym-skew}), per part \ref{it:odd-has-zero-mean} of \autoref{pro:flip}, it is enough to prove \ref{it:adjoint-relation} in the special cases where $p$ and $q$ are either $\sigma_j$-even and $\sigma_j$-odd or $\sigma_j$-odd and $\sigma_j$-even, respectively.
Let us define, for $\delta, \varepsilon > 0$, the set $X_{\delta,\varepsilon} := \{ x \in B^d \mid \abs{x_j} > \delta \ \wedge\ (\forall\,i \in \{1,\dotsc,d\}) \setminus \{j\} \ \abs{x_i} > \varepsilon \}$.
By integration by parts,
\begin{multline}\label{proper-IBP-adj}
\int_{X_{\delta,\varepsilon}} \partial_j p(x) q(x) W_{\alpha+1,\gamma}(x) \dd x\\
= \underbrace{\int_{\partial X_{\delta,\varepsilon}} p(x) q(x) W_{\alpha+1,\gamma}(x) \nu_j(x) \dd S(x)}_{:= b_{\delta,\varepsilon}} - \int_{X_{\delta,\varepsilon}} p(x) \partial_j(q(x) W_{\alpha+1,\gamma}(x)) \dd x,
\end{multline}
where $\nu$ is the outher normal vector field defined almost anywhere (with respect to the surface measure) on $\partial X_{\delta,\varepsilon}$.
Now, for every $x \in X_{\delta,\varepsilon}$, by direct computation
\begin{multline}\label{expansion-adj}
\partial_j(q(x) W_{\alpha+1,\gamma}(x))\\
= \left(\partial_j q(x) (1-\norm{x}^2) - 2(\alpha+1) x_j q(x)\right) W_{\alpha,\gamma}(x) + \frac{\gamma_j}{x_j} q(x) W_{\alpha+1,\gamma}(x).
\end{multline}
From the definition \eqref{parameter-raising-operator} of $\mathcal{D}\psp{\gamma}_j$, \eqref{proper-IBP-adj} and \eqref{expansion-adj},
\begin{multline}\label{IBP-adj}
\int_{X_{\delta,\varepsilon}} \mathcal{D}\psp{\gamma}_j p(x) \, q(x) \, W_{\alpha+1,\gamma}(x) \dd x\\
= b_{\delta,\varepsilon}
- \int_{X_{\delta,\varepsilon}} p(x) \left( \partial_j q(x) \, (1-\norm{x}^2) - 2(\alpha+1) x_j \, q(x) \right) W_{\alpha,\gamma}(x) \dd x\\
-\frac{\gamma_j}{2} \int_{X_{\delta,\varepsilon}} \frac{p(x)+p(\sigma_j x)}{x_j} q(x) \, W_{\alpha+1,\gamma}(x) \dd x.
\end{multline}
As $X_{\delta,\varepsilon}$ and $W_{\alpha,\gamma}$ are $\sigma_j$-invariant, a simple computation shows that
\begin{equation*}
\int_{X_{\delta,\varepsilon}} \frac{p(x) + p(\sigma_j x)}{x_j} q(x) \, W_{\alpha+1,\gamma}(x) \dd x = \int_{X_{\delta,\varepsilon}} p(x) \frac{q(x) - q(\sigma_j x)}{x_j} \, W_{\alpha+1,\gamma}(x) \dd x,
\end{equation*}
which, substituted into \eqref{IBP-adj}, results in (cf.\ \eqref{parameter-lowering-operator})
\begin{equation}\label{almost-adj}
\int_{X_{\delta,\varepsilon}} \mathcal{D}\psp{\gamma}_j p(x) \, q(x) \, W_{\alpha+1,\gamma}(x) \dd x
= b_{\delta,\varepsilon} + \int_{X_{\delta,\varepsilon}} p(x) \, \mathcal{D}\psp{\alpha,\gamma;\star}_j q(x) \, W_{\alpha,\gamma}(x) \dd x.
\end{equation}
As $W_{\alpha+1,\gamma}$ vanishes on $\partial X_{\delta,\varepsilon} \cap \sphere^{d-1}$ and $\nu_j$ vanishes almost everywhere on each of the sets $\{ x \in \partial X_{\delta,\varepsilon} \mid \abs{x_i} = \varepsilon \}$ for $i \in \{1, \dotsc, d\} \setminus \{j\}$, the boundary integral in \eqref{proper-IBP-adj}, \eqref{IBP-adj} and \eqref{almost-adj} can be written as
\begin{multline*}
b_{\delta,\varepsilon} = \int_{\left\{x \in \partial X_{\delta,\varepsilon} \mid \abs{x_j} = \delta\right\}} p(x) \, q(x) \,W_{\alpha+1,\gamma}(x) \operatorname{sign}(x_j) \dd S(x)\\
= \int_{\left\{x \in \partial X_{\delta,\varepsilon} \mid \abs{x_j} = \delta\right\}} \frac{p(x) \, q(x)}{x_j} \, W_{\alpha+1,\gamma+e_j}(x) \dd S(x).
\end{multline*}
Since $pq$ is $\sigma_j$-odd, we infer from \autoref{pro:Hadamard} that $x \mapsto p(x) q(x) / x_j = \rho_j(pq)/2$ belongs to $\CC(\overline{B^d})$.
Also, as $\alpha + 1 > 0$, $(1-\norm{x}^2)^{\alpha+1} \leq 1$ for all $x$ in the integration domain above.
Additionally, said integration domain is contained in $\{x \in [-1,1]^d \mid \abs{x_j} = \delta\}$.
Thus,
\begin{equation*}
\abs{b_{\delta,\varepsilon}}
\leq \delta^{\gamma_j+1} \max_{x \in \overline{B^d}} \abs{ \frac{p(x) q(x)}{x_j} } \prod_{\substack{i=1\\i\neq j}}^d \int_{[-1,1]} \abs{x_i}^{\gamma_i} \dd x_i.
\end{equation*}
Then, as $\gamma_i > -1$ for $i \in \{1, \dotsc, d\}$, for every fixed $\varepsilon$, $\lim_{\delta \to 0^+} b_{\varepsilon,\delta} = 0$.
Then, \ref{it:adjoint-relation} follows from \eqref{almost-adj} by first taking the limit as $\delta \to 0^+$ (which makes the boundary integral disappear) and then the limit as $\varepsilon \to 0^+$ (the volume integrals over $X_{\delta,\epsilon}$ converging to the corresponding ones over $B^d$ by the dominated convergence theorem)

Given $r_k \in \mathcal V_k^{\alpha+1,\gamma}$, by \eqref{Dunkl-star-inclusions}, $\mathcal{D}\psp{\alpha,\gamma;\star}_j r_k \in \poly_{k+1}^d $, and, on account of part \ref{it:adjoint-relation}, the latter is $\LL_{\alpha,\gamma}^2$-orthogonal to $\poly_k^d$, whence part \ref{it:parameter-lowering}.
An analogous argument accounts for part \ref{it:parameter-raising}.

Given $u \in \CC^1(\overline{B^d})$, by part \ref{it:parameter-raising}, $\mathcal{D}\psp{\gamma}_j \proj\sp{\alpha,\gamma}_k(u) \in \mathcal{V}\sp{\alpha+1,\gamma}_{k-1}$.
Part \ref{diff-shift} then comes about from the fact that for all $r \in \mathcal{V}\sp{\alpha+1,\gamma}_{k-1}$,
\begin{multline*}
\langle \mathcal{D}\psp{\gamma}_j\proj\sp{\alpha,\gamma}_k(u), r \rangle_{\alpha+1,\gamma}
\stackrel{\text{\ref{it:adjoint-relation}}}{=} \langle \proj\sp{\alpha,\gamma}_k(u), \mathcal{D}\psp{\alpha,\gamma;\star}_j r \rangle_{\alpha,\gamma}\\
\stackrel{\text{\ref{it:parameter-lowering}}}{=} \langle u, \mathcal{D}\psp{\alpha,\gamma;\star}_j r \rangle_{\alpha,\gamma}
\stackrel{\text{\ref{it:adjoint-relation}}}{=} \langle \mathcal{D}\psp{\gamma}_j u, r \rangle_{\alpha+1,\gamma}.
\end{multline*}
\end{proof}
\end{proposition}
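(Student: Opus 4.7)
The plan is to tackle the four parts in order, as parts \ref{it:parameter-lowering}--\ref{diff-shift} will follow essentially formally once the adjoint identity of part \ref{it:adjoint-relation} is established.

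The heart of the matter is part \ref{it:adjoint-relation}. My approach is integration by parts in the $j$-th variable against the weight $W_{\alpha+1,\gamma}$, with the obstacle being the possible non-smoothness of the weight on $\sphere^{d-1}$ and, more dangerously, across the coordinate hyperplanes $\{x_i = 0\}$ when some $\gamma_i$ is negative. I would therefore work on the truncated region $X_{\delta,\varepsilon} := \{ x \in B^d \mid \abs{x_j} > \delta \ \wedge\ \abs{x_i} > \varepsilon \text{ for } i \neq j\}$, apply classical integration by parts there, and take the iterated limits $\delta \to 0^+$ and then $\varepsilon \to 0^+$. A direct computation shows that $\partial_j W_{\alpha+1,\gamma}$ splits into a $-2(\alpha+1) x_j W_{\alpha,\gamma}$ piece, which furnishes the second summand in the definition \eqref{parameter-lowering-operator} of $\mathcal{D}\psp{\alpha,\gamma;\star}_j$, and a $(\gamma_j/x_j) W_{\alpha+1,\gamma}$ piece, which after merging with the difference part of $\mathcal{D}\psp{\gamma}_j$ through the change of variables $x \mapsto \sigma_j x$ (which preserves both $X_{\delta,\varepsilon}$ and $W_{\alpha,\gamma}$) reconstructs the first summand.

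To control the boundary terms in the limit, I would first exploit the $\sigma_j$-parity-flipping properties of $\mathcal{D}\psp{\gamma}_j$ and $\mathcal{D}\psp{\alpha,\gamma;\star}_j$ expressed in \eqref{Dunkl-sym-skew} and \eqref{Dunkl-star-sym-skew}, together with part \ref{it:odd-has-zero-mean} of \autoref{pro:flip}, to reduce to the case in which $pq$ is $\sigma_j$-odd; in the complementary case both sides of the identity vanish trivially since each integrand is $\sigma_j$-odd. Under this parity reduction, \autoref{pro:Hadamard} implies that $pq/x_j$ extends continuously to $\overline{B^d}$, so the surviving boundary contribution on $\{\abs{x_j} = \delta\}$ is bounded by $\delta^{\gamma_j+1}$ times a constant and tends to $0$ since $\gamma_j > -1$. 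The boundary contributions on $\{\abs{x_i}=\varepsilon\}$ for $i \neq j$ vanish identically because $\nu_j = 0$ there, the $\sphere^{d-1}$ contribution vanishes because $W_{\alpha+1,\gamma}$ vanishes there (as $\alpha+1 > 0$), and the subsequent $\varepsilon \to 0^+$ limit of the interior integrals is handled by dominated convergence, using the local integrability of $\abs{x_i}^{\gamma_i}$.

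Parts \ref{it:parameter-lowering} and \ref{it:parameter-raising} then follow by combining part \ref{it:adjoint-relation} with the degree-shifting inclusions \eqref{Dunkl-inclusions} and \eqref{Dunkl-star-inclusions}: for instance, for \ref{it:parameter-lowering}, testing $\mathcal{D}\psp{\alpha,\gamma;\star}_j r_k$ against any $q \in \poly^d_k$ and invoking \ref{it:adjoint-relation} relocates the derivative onto $q$, producing $\langle \mathcal{D}\psp{\gamma}_j q, r_k \rangle_{\alpha+1,\gamma}$, which vanishes because $\mathcal{D}\psp{\gamma}_j q \in \poly^d_{k-1}$ and $r_k \in \mathcal{V}\sp{\alpha+1,\gamma}_k$; \ref{it:parameter-raising} is symmetric. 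Finally, for the commutation identity \ref{diff-shift}, part \ref{it:parameter-raising} places $\mathcal{D}\psp{\gamma}_j \proj\sp{\alpha,\gamma}_k(u)$ inside $\mathcal{V}\sp{\alpha+1,\gamma}_{k-1}$, and against any test $r$ in that space the adjoint relation is applied twice, with part \ref{it:parameter-lowering} guaranteeing $\mathcal{D}\psp{\alpha,\gamma;\star}_j r \in \mathcal{V}\sp{\alpha,\gamma}_k$ so that the projector $\proj\sp{\alpha,\gamma}_k$ in front of $u$ can be removed. The principal difficulty lies in the boundary-term analysis in part \ref{it:adjoint-relation}; everything else is formal manipulation driven by the adjoint relation and the degree-shifting properties.
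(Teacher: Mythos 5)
Your proposal is correct and follows essentially the same route as the paper's proof: integration by parts over the truncated region $X_{\delta,\varepsilon}$, the identical splitting of $\partial_j W_{\alpha+1,\gamma}$ and merging of the $\gamma_j/x_j$ term with the difference part of $\mathcal{D}\psp{\gamma}_j$ via the change of variables $\sigma_j$, the same parity reduction through \eqref{Dunkl-sym-skew}, \eqref{Dunkl-star-sym-skew} and part \ref{it:odd-has-zero-mean} of \autoref{pro:flip}, the same appeal to \autoref{pro:Hadamard} to get the $\delta^{\gamma_j+1}$ bound on the surviving boundary term, and the same formal deduction of parts \ref{it:parameter-lowering}--\ref{diff-shift} from the adjoint relation and the degree-shifting inclusions. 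No gaps.
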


Given $\gamma \in \Real^d$ we introduce the differential-difference operators $\mathcal{D}\psp{\gamma}_{i,j}$, $i, j \in \{1, \dotsc, d\}$, by
\begin{equation}\label{angular-Dunkl}
\mathcal{D}\psp{\gamma}_{i,j} := x_i \, \mathcal{D}\psp{\gamma}_j - x_j \, \mathcal{D}\psp{\gamma}_i.
\end{equation}
Under this definition, the $\mathcal{D}\psp{\gamma}_{i,i}$ operators are simply the null operator.
If $\gamma = 0$ and $i<j$, the $\mathcal{D}\psp{\gamma}_{i,j}$ operators are angular derivatives \cite[Sec.~1.8]{DaiXu:2013}.

The following proposition shows that this operators is minus its adjoint in a certain sense, that this operator is parameter- and degree-invariant and a commutation relation involving this operator and projectors onto the same orthogonal polynomial spaces.

\begin{proposition}\label{pro:angular-Dunkl-adjoint}
Let $\alpha \in (-1,\infty)$, $\gamma \in (-1,\infty)^d$, $i,j \in \{1,\dotsc,d\}$.
\begin{enumerate}
\item\label{it:angular-adjoint} Let $p,q \in \CC^1(\overline{B^d})$.
Then, $\langle \mathcal{D}\psp{\gamma}_{i,j} p , q \rangle_{\alpha,\gamma} = - \langle p , \mathcal{D}\psp{\gamma}_{i,j} q \rangle_{\alpha,\gamma}$.
\item\label{it:angular-parameter-preserving} Let $p_k \in \mathcal{V}\sp{\alpha,\gamma}_k$.
Then, $\mathcal{D}\psp{\gamma}_{i,j} p_k \in \mathcal{V}\sp{\alpha,\gamma}_k$.
\item\label{it:angular-proj-commute} Let $u \in \CC^1(\overline{B^d})$.
Then, $\mathcal{D}\psp{\gamma}_{i,j} \proj\sp{\alpha,\gamma}_k(u) = \proj\sp{\alpha,\gamma}_k(\mathcal{D}\psp{\gamma}_{i,j} u)$.
\end{enumerate}
\begin{proof}
In the non-trivial case $i \neq j$, we infer from the commutation relations \eqref{Dunkl-sym-skew} and \eqref{xi-sym-skew} and the definition \eqref{angular-Dunkl} that the operator $\mathcal{D}\psp{\gamma}_{i,j}$ flips both the $\sigma_i$-parity and the $\sigma_j$ parity of each term in the four-way decomposition \eqref{twofold-parity-decomposition} of $p$.
Then, by part \ref{it:odd-has-zero-mean} of \autoref{pro:flip},
\begin{multline*}
\langle \mathcal{D}\psp{\gamma}_{i,j} p , q \rangle_{\alpha,\gamma}
= \langle \mathcal{D}\psp{\gamma}_{i,j}(\Sym_i \Sym_j p), \Skew_i \Skew_j q \rangle_{\alpha,\gamma}\\
+ \langle \mathcal{D}\psp{\gamma}_{i,j}(\Sym_i \Skew_j p), \Skew_i \Sym_j q \rangle_{\alpha,\gamma}\\
+ \langle \mathcal{D}\psp{\gamma}_{i,j}(\Skew_i \Sym_j p), \Sym_i \Skew_j q \rangle_{\alpha,\gamma}\\
+ \langle \mathcal{D}\psp{\gamma}_{i,j}(\Skew_i \Skew_j p), \Sym_i \Sym_j q \rangle_{\alpha,\gamma}.
\end{multline*}
Thus, it is enough to consider the special cases in which $p$ and $q$ are simultaneously of opposite $\sigma_i$- and $\sigma_j$-parity.
Those cases, in turn, are covered by the supposition that $p q$ is simultaneously $\sigma_i$-odd and $\sigma_j$-odd, which we adopt from now on.

By direct computation it is rapidly checked that,
\begin{multline}\label{angular-expansion}
\langle \mathcal{D}\psp{\gamma}_{i,j} p, q \rangle_{\alpha,\gamma} + \langle p, \mathcal{D}\psp{\gamma}_{i,j} q \rangle_{\alpha,\gamma}
= \langle \mathcal{D}\psp{0}_{i,j} p, q \rangle_{\alpha,\gamma} + \langle p, \mathcal{D}\psp{0}_{i,j} q \rangle_{\alpha,\gamma}\\
+ \int_{B^d} \left( \frac{\gamma_j}{2} x_i \frac{p(x) - p(\sigma_j x)}{x_j} - \frac{\gamma_i}{2} x_j \frac{p(x) - p(\sigma_i x)}{x_i} \right) q(x) W_{\alpha,\gamma}(x) \dd x\\
+ \int_{B^d} p(x) \left( \frac{\gamma_j}{2} x_i \frac{q(x) - q(\sigma_j x)}{x_j} - \frac{\gamma_i}{2} x_j \frac{q(x) - q(\sigma_i x)}{x_i} \right) W_{\alpha,\gamma}(x) \dd x.
\end{multline}
As the purely differential operator $\mathcal{D}\psp{0}_{i,j} = x_i \partial_j - x_j \partial_i$ satisfies the relation $\mathcal{D}\psp{0}_{i,j}(pq) = \mathcal{D}\psp{0}_{i,j}(p) \, q + p \, \mathcal{D}\psp{0}_{i,j}(q)$ and vanishes on radial functions,
\begin{multline}
\langle \mathcal{D}\psp{0}_{i,j} p, q \rangle_{\alpha,\gamma} + \langle p, \mathcal{D}\psp{0}_{i,j}(q) \rangle_{\alpha,\gamma}\\
= \int_{B^d} \div \left( p(x) q(x) (1-\norm{x}^2)^\alpha (x_i e_j - x_j e_i) \right) \prod_{k=1}^d \abs{x_k}^{\gamma_k} \dd x.
\end{multline}
Let us define, for $\varepsilon > 0$ and $0 < r < 1$, the set $X_{r,\varepsilon} := \{ x \in r B^d \mid (\forall \, k \in \{1,\dotsc,d\}) \ \abs{x_k} > \varepsilon \}$.
By the Lebesgue dominated convergence theorem and integration by parts,
\begin{multline}\label{div-IBP}
\langle \mathcal{D}\psp{0}_{i,j} p, q \rangle_{\alpha,\gamma} + \langle p, \mathcal{D}\psp{0}_{i,j}(q) \rangle_{\alpha,\gamma}\\
= \lim_{\substack{r\to 1^-\\ \varepsilon\to 0^+}} \Bigg( \int_{\partial X_{r,\varepsilon}} p(x)q(x) W_{\alpha,\gamma}(x) (x_i e_j - x_j e_i) \cdot \nu(x) \dd S(x)\\
- \underbrace{\int_{X_{r,\varepsilon}} p(x) q(x) W_{\alpha,\gamma}(x) (x_i e_j - x_j e_i) \cdot \sum_{l=1}^d \left( x_l^{-1} \gamma_l \, e_l \right) \dd x}_{:= v_{r,\varepsilon}} \Bigg),
\end{multline}
where $\nu$ is the outer unit normal vector field defined almost anywhere (with respect to the surface measure we have denoted by $S$) on $\partial X_{r,\varepsilon}$.
For $k \in \{1, \dotsc, d\}$, let us define the subsurfaces $A_{r,\varepsilon,k} := \{ x \in \partial X_{r,\varepsilon} \mid \abs{x_k} = \varepsilon\}$.
Then, the union $(r \sphere^{d-1} \cap \partial X_{r,\varepsilon}) \cup \bigcup_{k=1}^d A_{r,\varepsilon,k}$ is a decomposition of $\partial X_{r,\varepsilon}$ in sets whose pairwise intersections have zero $S$-measure.
Now, for $S$-almost every $x \in r \sphere^{d-1} \cap \partial X_{r,\varepsilon}$, $\nu(x) = r^{-1} x$, which is orthogonal to $x_i e_j - x_j e_i$, and for $k \in \{1, \dotsc, d\}$, for $S$-almost every $x \in A_{r,\varepsilon,k}$, $\nu(x) = -\operatorname{sign}(x_k) \, e_k$, which is again orthogonal to $x_i e_j - x_j e_i$ if $k \notin \{i,j\}$.
Hence, on defining
\begin{equation*}
\begin{aligned}
I_{r,\varepsilon,j} & := -\int_{A_{r,\varepsilon,j}} p(x) q(x) x_i \operatorname{sign}(x_j) W_{\alpha,\gamma}(x) \dd S(x),\\
I_{r,\varepsilon,i} & := \int_{A_{r,\varepsilon,i}} p(x) q(x) x_j \operatorname{sign}(x_i) W_{\alpha,\gamma}(x) \dd S(x),
\end{aligned}
\end{equation*}
we can express \eqref{div-IBP} as
\begin{equation}\label{div-IBP-2}
\langle \mathcal{D}\psp{0}_{i,j} p, q \rangle_{\alpha,\gamma} + \langle p, \mathcal{D}\psp{0}_{i,j} q \rangle_{\alpha,\gamma}
= \lim_{\substack{r\to 1^-\\ \varepsilon\to 0^+}} \left( I_{r,\varepsilon,j} + I_{r,\varepsilon,i} - v_{r,\varepsilon} \right).
\end{equation}

As $pq$ is $\sigma_j$-odd, by \autoref{pro:Hadamard}, $x \mapsto p(x) q(x) / x_j = \rho_j(pq)/2$ belongs to $\CC(\overline{B^d})$.
Also, for all $x \in A_{r,\varepsilon,j}$, $\norm{x} \leq r < 1$, which in turn implies that $(1-\norm{x}^2)^\alpha$ is bounded by $(1-r^2)^\alpha$ if $\alpha < 0$ and by $1$ if $\alpha \geq 0$.
Further, $A_{r,\varepsilon,j}$ is contained in $\{ x \in [-1,1]^d \mid \abs{x_j} = \varepsilon \}$.
Thus,
\begin{equation*}
\abs{I_{r,\varepsilon,j}} \leq \varepsilon^{\gamma_j+1} \sup_{x \in \overline{B^d}} \abs{\frac{p(x) q(x)}{x_j}} \, r \, \begin{cases} (1-r^2)^\alpha & \text{if } \alpha < 0\\ 1 & \text{if } \alpha \geq 0 \end{cases} \times \prod_{\substack{k=1\\k\neq j}}^d \int_{[-1,1]} \abs{x_k}^{\gamma_k} \dd x_k.
\end{equation*}
As all the entries of $\gamma$ are greater than $-1$, the integrals over $[-1,1]$ above are finite, so we can conclude that, for all $r \in (0,1)$, $\lim_{\varepsilon \to 0^+} I_{r,\varepsilon,j} = 0$.
The same argument holds for $I_{r,\varepsilon,i}$, so for all $r \in (0,1)$, $\lim_{\varepsilon \to 0^+} I_{r,\varepsilon,i} = 0$.

By expanding the dot product in the integral in $v_{r,\varepsilon}$ (cf.\ \eqref{div-IBP}), judiciously expanding, say, $p = \Sym_i(p) + \Skew_i(p)$ or $p = \Sym_j(p) + \Skew_j(p)$ and changing variable through $\sigma_i$ or $\sigma_j$ where necessary to make $\Sym_i(p)$ and $\Sym_j(p)$ disappear and $\Skew_i(q)$ and $\Skew_j(q)$ appear, we find that
\begin{multline}\label{ang-expand-IBP-volume}
v_{r,\varepsilon} = \int_{X_{r,\varepsilon}} \left( \frac{\gamma_j}{2} x_i \frac{p(x) - p(\sigma_j x)}{x_j} - \frac{\gamma_i}{2} x_j \frac{p(x) - p(\sigma_i x)}{x_i} \right) q(x) W_{\alpha,\gamma}(x) \dd x\\
+ \int_{X_{r,\varepsilon}} p(x) \left( \frac{\gamma_j}{2} x_i \frac{q(x) - q(\sigma_j x)}{x_j} - \frac{\gamma_i}{2} x_j \frac{q(x) - q(\sigma_i x)}{x_i} \right) W_{\alpha,\gamma}(x) \dd x.
\end{multline}
Therefore, substituting \eqref{ang-expand-IBP-volume} into \eqref{div-IBP-2} and the result, in turn, into \eqref{angular-expansion}, yields \ref{it:angular-adjoint}.

Let $p_k \in \mathcal{V}\sp{\alpha,\gamma}_k$.
By \eqref{Dunkl-inclusions}, $\mathcal{D}\psp{\gamma}_{i,j} p_k \in \poly_k^d $, and, on account of part \ref{it:angular-adjoint}, the latter is $\LL_{\alpha,\gamma}^2$-orthogonal to $\poly_{k-1}^d$, whence part \ref{it:angular-parameter-preserving}.

Given $u \in \CC^1(\overline{B^d})$, by part \ref{it:angular-parameter-preserving}, $\mathcal{D}\psp{\gamma}_{i,j} \proj\sp{\alpha,\gamma}_k(u) \in \mathcal{V}\sp{\alpha,\gamma}_k$.
Part \ref{it:angular-proj-commute} then follows from the fact that for all $r \in \mathcal{V}\sp{\alpha,\gamma}_k$,
\begin{multline*}
\langle \mathcal{D}\psp{\gamma}_{i,j} \proj\sp{\alpha,\gamma}_k(u), r \rangle_{\alpha,\gamma}
\stackrel{\text{\ref{it:angular-adjoint}}}{=} - \langle \proj\sp{\alpha,\gamma}_k(u), \mathcal{D}\psp{\gamma}_{i,j} r \rangle_{\alpha,\gamma}\\
\stackrel{\text{\ref{it:angular-parameter-preserving}}}{=} - \langle u, \mathcal{D}\psp{\gamma}_{i,j} r \rangle_{\alpha,\gamma}
\stackrel{\text{\ref{it:angular-adjoint}}}{=} \langle \mathcal{D}\psp{\gamma}_{i,j} u, r \rangle_{\alpha,\gamma}.
\end{multline*}
\end{proof}
\end{proposition}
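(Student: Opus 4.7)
My plan is to prove the three parts in order, with part (i) being the substantive one and parts (ii), (iii) following by general nonsense of the same kind used in Propositions \ref{pro:id-shift} and \ref{pro:diff-shift}.

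For part (i), the case $i = j$ is trivial since $\mathcal{D}\psp{\gamma}_{i,i} \equiv 0$, so assume $i \neq j$. My plan is to decompose $\mathcal{D}\psp{\gamma}_{i,j} = \mathcal{D}\psp{0}_{i,j} + R_{i,j}$, where $\mathcal{D}\psp{0}_{i,j} = x_i \partial_j - x_j \partial_i$ is the purely differential part and $R_{i,j}$ collects the two reflection-quotient terms coming from the definition \eqref{parameter-raising-operator} of the Dunkl operators, namely $R_{i,j}(f) = \frac{\gamma_j}{2} x_i \rho_j(f) - \frac{\gamma_i}{2} x_j \rho_i(f)$. Before integrating anything, I would use the four-way decomposition \eqref{twofold-parity-decomposition} together with the fact that, by \eqref{Dunkl-sym-skew} and \eqref{xi-sym-skew}, the operator $\mathcal{D}\psp{\gamma}_{i,j}$ flips both $\sigma_i$- and $\sigma_j$-parities, to reduce via part \ref{it:odd-has-zero-mean} of \autoref{pro:flip} to the case where $p q$ is simultaneously $\sigma_i$- and $\sigma_j$-odd; this is precisely the parity arrangement that guarantees the finiteness and continuity of the quotients $p q / x_i$ and $p q / x_j$ via \autoref{pro:Hadamard}.

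The core of the argument is integration by parts for the differential part on a truncated domain $X_{r,\varepsilon} := \{x \in r B^d : \abs{x_k} > \varepsilon \text{ for all } k\}$ that avoids both the sphere (where $W_{\alpha,\gamma}$ may blow up when $\alpha < 0$) and the coordinate hyperplanes (where $W_{\alpha,\gamma}$ may blow up when some $\gamma_k < 0$). Applying the divergence theorem to $pq W_{\alpha,\gamma} (x_i e_j - x_j e_i)$, the volume term produces $\langle \mathcal{D}\psp{0}_{i,j} p, q\rangle + \langle p, \mathcal{D}\psp{0}_{i,j} q\rangle$ plus a term arising from differentiating $\prod_k \abs{x_k}^{\gamma_k}$, which I expect to combine exactly with the reflection contributions from $\langle R_{i,j} p, q\rangle + \langle p, R_{i,j} q\rangle$ after invoking the identity $(p(x) + p(\sigma_k x))/x_k = -\sigma_k$-symmetrization style substitution (using $W_{\alpha,\gamma}$'s $\sigma_k$-invariance). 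The boundary contribution on $r\sphere^{d-1} \cap \partial X_{r,\varepsilon}$ annihilates because the outer normal $r^{-1}x$ is $L^2$-orthogonal to $x_i e_j - x_j e_i$ (this is the key geometric miracle for angular operators), and the contributions on the coordinate faces $\{\abs{x_k} = \varepsilon\}$ for $k \notin \{i,j\}$ vanish for the same reason. The only boundary terms that survive are on $\{\abs{x_i} = \varepsilon\}$ and $\{\abs{x_j} = \varepsilon\}$; these I control using \autoref{pro:Hadamard} to bound $pq/x_i$ and $pq/x_j$, multiplied by $\varepsilon^{\gamma_k + 1}$, which tends to zero as $\varepsilon \to 0^+$ since $\gamma_k > -1$. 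Taking $\varepsilon \to 0^+$ and $r \to 1^-$ via dominated convergence concludes (i).

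The main obstacle I anticipate is the careful bookkeeping of the reflection terms and making sure the boundary integrals on the coordinate-hyperplane faces truly vanish; this requires the $\sigma_i$-$\sigma_j$ parity reduction to be done carefully so that every single-factor quotient one encounters is in fact a Hadamard-type bounded function. Once (i) is in hand, part (ii) is immediate: the products $x_i \mathcal{D}\psp{\gamma}_j p_k$ and $x_j \mathcal{D}\psp{\gamma}_i p_k$ lie in $\poly^d_k$ by the inclusions \eqref{Dunkl-inclusions}, and for any $q \in \poly^d_{k-1}$ part (i) gives $\langle \mathcal{D}\psp{\gamma}_{i,j} p_k, q\rangle_{\alpha,\gamma} = -\langle p_k, \mathcal{D}\psp{\gamma}_{i,j} q\rangle_{\alpha,\gamma} = 0$ since $\mathcal{D}\psp{\gamma}_{i,j} q \in \poly^d_{k-1}$ as well. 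Part (iii) then follows the template of part \ref{diff-shift} of \autoref{pro:diff-shift}: by (ii) the left-hand side lies in $\mathcal{V}\sp{\alpha,\gamma}_k$, and testing against an arbitrary $r \in \mathcal{V}\sp{\alpha,\gamma}_k$ one chains (i), (ii) and the defining property of $\proj\sp{\alpha,\gamma}_k$ to obtain $\langle \mathcal{D}\psp{\gamma}_{i,j} \proj\sp{\alpha,\gamma}_k(u), r\rangle_{\alpha,\gamma} = \langle \mathcal{D}\psp{\gamma}_{i,j} u, r\rangle_{\alpha,\gamma}$, as required.
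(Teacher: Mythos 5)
Your proposal is correct and follows essentially the same route as the paper's proof: the same parity reduction to the case where $pq$ is simultaneously $\sigma_i$- and $\sigma_j$-odd, the same split of $\mathcal{D}\psp{\gamma}_{i,j}$ into $\mathcal{D}\psp{0}_{i,j}$ plus the reflection-quotient remainder, the same truncated domain $X_{r,\varepsilon}$ with the spherical and off-index coordinate-face boundary terms annihilated by orthogonality of the normal to $x_i e_j - x_j e_i$, the surviving faces controlled by \autoref{pro:Hadamard} and the factor $\varepsilon^{\gamma_k+1}$, and the weight-derivative volume term cancelling against the reflection contributions; parts (ii) and (iii) are handled identically. (Only a cosmetic quibble: the normal is Euclidean-orthogonal, not ``$L^2$-orthogonal'', to $x_i e_j - x_j e_i$.)
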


\section{Sturm--Liouville problems and approximation results}\label{sec:SL-approx}

In rough terms, we will infer from the regularity of a function being approximated the weighted summability of the squared norms of its projectors onto a sequence of orthogonal polynomial spaces.
In turn, this will lead to information about the approximation quality of the truncation projection $S\psp{\alpha,\gamma}_N$.
In this endeavor, the characterization of orthogonal polynomial spaces as eigenspaces of a Sturm--Liouville-type operator will be essential.

From \cite[Th.~8.1.3]{DunklXu:2014}, if $\alpha > -1$ and $\gamma \in (-1,\infty)^d$, every $p_n \in \mathcal{V}^{\alpha,\gamma}_n$ satisfies
\begin{equation}\label{eigenvalue-problem-strong}
\mathcal{L}\sp{\alpha,\gamma}(p_n) := \left(-\Delta_h + (x \cdot \nabla)^2 + 2 \lambda^{\alpha,\gamma} x \cdot \nabla \right) p_n
= n(n+2\lambda^{\alpha,\gamma}) p_n,
\end{equation}
where
\begin{equation}\label{h-Laplacian-lambda}
\Delta_h = \sum_{i=1}^d ( \mathcal{D}\psp{\gamma}_i )^2
\qquad\text{and}\qquad
\lambda^{\alpha,\gamma} = \alpha + \frac{1}{2} \sum_{i=1}^d \gamma_i + \frac{d}{2}.
\end{equation}

We will now put the operator $\mathcal{L}\sp{\alpha,\gamma}$ of \eqref{eigenvalue-problem-strong} into a form that we can test, treat with integration-by-parts substitutes (part \ref{it:adjoint-relation} of \autoref{pro:diff-shift} and part \ref{it:angular-adjoint} of \autoref{pro:angular-Dunkl-adjoint}) and turn into a transparently self-adjoint weak form.

Taking into account the second characterization in \eqref{parameter-lowering-operator} defining $\mathcal{D}\psp{\alpha,\gamma;\star}_j$, it is readily checked that
% Tested with Mathematica
\begin{equation}\label{sum-Dunkl-adjoint-Dunkl}
	\sum_{i=1}^d \mathcal{D}\psp{\alpha,\gamma;\star}_i (\mathcal{D}\psp{\gamma}_i p)
	= -(1-\norm{x}^2) \Delta_h p + 2(\alpha+1) x \cdot \nabla p + 2(\alpha+1) \sum_{i=1}^d \gamma_i \Skew_i(p).
\end{equation}
Also, from the definition \eqref{angular-Dunkl} and \eqref{Dunkl-xi-commutator}, for all $i, j \in \{1, \dotsc, d\}$ with $i \neq j$,
% Tested with Mathematica
\begin{multline}\label{gang-squared}
(\mathcal{D}\psp{\gamma}_{i,j})^2
= (x_i^2 (\mathcal{D}\psp{\gamma}_j)^2 + x_j^2 (\mathcal{D}\psp{\gamma}_i)^2) - 2 x_i x_j \, \mathcal{D}\psp{\gamma}_i \mathcal{D}\psp{\gamma}_j\\
- (x_i \, \mathcal{D}\psp{\gamma}_i + x_j \, \mathcal{D}\psp{\gamma}_j) - (\gamma_i x_j \sigma^*_i \mathcal{D}\psp{\gamma}_j + \gamma_j x_i \sigma^*_j \mathcal{D}\psp{\gamma}_i).
\end{multline}
Then, as a direct consequence of \eqref{gang-squared}, we can write
\begin{multline}\label{dunkl-laplace-beltrami-expanding}
\sum_{1 \leq i < j \leq d} (\mathcal{D}\psp{\gamma}_{i,j})^2
= \frac{1}{2} \sum_{\substack{1 \leq i, j \leq d\\i \neq j}} (\mathcal{D}\psp{\gamma}_{i,j})^2\\
= \norm{x}^2 \Delta_h - \sum_{1 \leq i, j \leq d} x_i x_j \, \mathcal{D}\psp{\gamma}_i \mathcal{D}\psp{\gamma}_j - (d-1) \sum_{1 \leq i \leq d} x_i \, \mathcal{D}\psp{\gamma}_i - \sum_{\substack{1 \leq i, j \leq d\\i \neq j}} \gamma_i x_j \sigma^*_i \mathcal{D}\psp{\gamma}_j.
\end{multline}
Considering the easily verifiable identities
\begin{equation}
(x \cdot \nabla)^2 = \sum_{1 \leq i,j \leq d} x_i x_j \partial_i \partial_j + (x \cdot \nabla),
\end{equation}
\begin{equation}
x_i^2 (\mathcal{D}\psp{\gamma}_i)^2 = x_i^2 \partial_i^2 + \gamma_i x_i \partial_i - \gamma_i \Skew_i
\end{equation}
and
\begin{equation}
(x_i \, \mathcal{D}\psp{\gamma}_i)(x_j \, \mathcal{D}\psp{\gamma}_j) = x_i x_j \partial_i \partial _j + (\gamma_j x_i \partial_i \Skew_j + \gamma_i x_j \partial_j \Skew_i ) + \gamma_i \gamma_j \Skew_i \Skew_j,
\end{equation}
for $i \neq j$; we can readily write
\begin{multline}\label{expanding-xixjTiTj}
\sum_{1 \leq i, j \leq d} x_i x_j \, \mathcal{D}\psp{\gamma}_i \mathcal{D}\psp{\gamma}_j
= \sum_{1 \leq i \leq d} x_i^2 (\mathcal{D}\psp{\gamma}_i)^2 + \sum_{\substack{1 \leq i,j \leq d\\i \neq j}} x_i x_j \mathcal{D}\psp{\gamma}_i \mathcal{D}\psp{\gamma}_j\\
= (x \cdot \nabla)^2 - (x \cdot \nabla) + \sum_{1 \leq i \leq d} \gamma_i x_i \partial_i
+ 2 \sum_{\substack{1 \leq i,j \leq d\\i \neq j}} \gamma_i x_j \partial_j \Skew_i\\
- \sum_{1 \leq i \leq d} \gamma_i \Skew_i + \sum_{\substack{1 \leq i,j \leq d\\ i \neq j}} \gamma_i \gamma_j \Skew_i \Skew_j.
\end{multline}
Then, replacing \eqref{expanding-xixjTiTj} in \eqref{dunkl-laplace-beltrami-expanding} and using the fact that $x_j \, \mathcal{D}\psp{\gamma}_j = x_j \partial_j + \gamma_j \Skew_j$, we get
\begin{multline}\label{dunkl-laplace-beltrami-expanding-2}
\sum_{1 \leq i < j \leq d} (\mathcal{D}\psp{\gamma}_{i,j})^2
= \norm{x}^2 \Delta_h - (x \cdot \nabla)^2 - (d-2)(x \cdot \nabla) - (d-2) \sum_{1 \leq i \leq d} \gamma_i \Skew_i\\
- \sum_{\substack{1 \leq i,j \leq d\\i \neq j}} \gamma_i \sigma^*_i x_j \partial_j - \sum_{\substack{1 \leq i,j \leq d\\i \neq j}} \gamma_i \gamma_j \sigma^*_i \Skew_j - \sum_{1 \leq i \leq d} \gamma_i x_i \partial_i\\
- 2 \sum_{\substack{1 \leq i,j \leq d\\i \neq j}} \gamma_i x_j \partial_j \Skew_i - \sum_{\substack{1 \leq i,j \leq d\\i \neq j}} \gamma_i \gamma_j \Skew_i \Skew_j.
\end{multline}
Lastly, considering the identity $\sum_{1\leq i \leq d} \gamma_i x_i \partial_i = (\sum_{1 \leq i \leq d} \gamma_i ) (x \cdot \nabla) - \sum_{\substack{1\leq i,j \leq d\\i \neq j}} \gamma_i x_j \partial_j$, adding and substracting the term $(\sum_{1 \leq i \leq d} \gamma_i) \sum_{i \leq i \leq d} \gamma_i \Skew_i$, considering the identity $\Skew_j = \operatorname{Id} - \sigma^*_j - \Skew_j$, and simplifying, we can readily obtain
% Tested with Mathematica
\begin{multline}\label{dunkl-laplace-beltrami-newform}
\sum_{1 \leq i < j \leq d} (\mathcal{D}\psp{\gamma}_{i,j})^2
= \norm{x}^2 \Delta_h - (x \cdot \nabla)^2 - \left(d-2+\sum_{i=1}^d \gamma_i\right) (x \cdot \nabla)\\
- \left(d-2+\sum_{i=1}^d \gamma_i\right) \sum_{i=1}^d \gamma_i \Skew_i + \sum_{i=1}^d \sum_{j=1}^d \gamma_i \gamma_j \Skew_i \Skew_j.
\end{multline}
Thus, substracting \eqref{dunkl-laplace-beltrami-newform} from \eqref{sum-Dunkl-adjoint-Dunkl} to then note the appearance of the operator $\mathcal{L}\sp{\alpha,\gamma}$ of \eqref{eigenvalue-problem-strong} we can conclude that it can also be expressed as
\begin{multline}\label{L-BF-form}
\mathcal{L}\sp{\alpha,\gamma}(p) = \sum_{i=1}^d \mathcal{D}\psp{\alpha,\gamma;\star}_i(\mathcal{D}\psp{\gamma}_i p) - \sum_{1 \leq i < j \leq d} (\mathcal{D}\psp{\gamma}_{i,j})^2 p\\
- 2 \lambda^{\alpha,\gamma} \sum_{i=1}^d \gamma_i \Skew_i(p) + \sum_{i=1}^d \sum_{j=1}^d \gamma_i \gamma_j \Skew_i(\Skew_j(p)).
\end{multline}

Using part \ref{it:odd-has-zero-mean} of \autoref{pro:flip}, part \ref{it:adjoint-relation} of \autoref{pro:diff-shift} and part \ref{it:angular-adjoint} of \autoref{pro:angular-Dunkl-adjoint}, we find that
\begin{equation}\label{L-self-adjoint}
\left(\forall\,p\in\CC^2(\overline{B^d})\right)\ \left(\forall\,q\in\CC^1(\overline{B^d})\right) \quad
\langle \mathcal{L}\sp{\alpha,\gamma}(p), q \rangle_{\alpha,\gamma}
= B(p,q),
\end{equation}
where the symmetric bilinear form $B \colon \CC^1(\overline{B^d}) \times \CC^1(\overline{B^d}) \to \Real$ is defined by
\begin{multline}\label{Sturm-Liouville-bilinear-form}
B(u,v)
:= \sum_{i=1}^d \langle \mathcal{D}\psp{\gamma}_i u, \mathcal{D}\psp{\gamma}_i v \rangle_{\alpha+1,\gamma}
+ \sum_{1 \leq i < j \leq d} \langle \mathcal{D}\psp{\gamma}_{i,j} u, \mathcal{D}\psp{\gamma}_{i,j} v \rangle_{\alpha,\gamma}\\
- 2 \lambda^{\alpha,\gamma} \sum_{i=1}^d \gamma_i \langle \Skew_i(u), \Skew_i(v) \rangle_{\alpha,\gamma}\\
+ \sum_{i=1}^d \sum_{j=1}^d \gamma_i \gamma_j \langle \Skew_i(\Skew_j(u)), \Skew_i(\Skew_j(v)) \rangle_{\alpha,\gamma}.
\end{multline}
Through \eqref{L-self-adjoint} the eigenvalue (Sturm--Liouville) problem \eqref{eigenvalue-problem-strong} satisfied by the $\LL^2_{\alpha,\gamma}$-orthogonal polynomials can be expressed in the weak form
\begin{equation}\label{weak-Sturm-Liouville}
\left( \forall \, p_n \in \mathcal{V}\sp{\alpha,\gamma}_n \right) \
\left( \forall \,q \in \CC^1({\overline{B^d}}) \right) \quad
B(p_n, q) = n(n+2\lambda^{\alpha,\gamma}) \langle p_n, q \rangle_{\alpha,\gamma},
\end{equation}

Directly from the definition \eqref{Sturm-Liouville-bilinear-form} and standard inequalities follows the bound
\begin{equation}\label{brute-B-bound}
\left(\forall\,u,v\in\CC^1(\overline{B^d})\right) \quad
\abs{B(u,v)} \leq C_B \norm{u}_{\alpha,\gamma;1} \norm{v}_{\alpha,\gamma;1}
\end{equation}
for some $C_B = C_B(\alpha,\gamma) > 0$.
Given any polynomial $p \in \poly^d$, it follows from \eqref{weak-Sturm-Liouville} and \eqref{parseval-L2} that
\begin{equation*}
B(p,p) = \sum_{n=0}^{\operatorname{degree}(p)} n(n+2\lambda^{\alpha,\gamma}) \norm{\proj\sp{\alpha,\gamma}_n(p)}_{\alpha,\gamma}^2
\geq \inf_{n \in \natural_0}\left( n(n+2\lambda^{\alpha,\gamma}) \right) \norm{p}_{\alpha,\gamma}^2.
\end{equation*}
From the definition of $\lambda^{\alpha,\gamma}$ in \eqref{h-Laplacian-lambda} and the fact that $\alpha, \gamma_1, \dotsc, \gamma_d > -1$ it follows that the above infimum is $\min(0, 1+2\lambda^{\alpha,\gamma})$.
Also, because of the bound \eqref{brute-B-bound} and the density of polynomials in $\HH^1_{\alpha,\gamma} \supseteq \CC^1(\overline{B^d})$ (cf.\ \autoref{pro:polynomials-dense}), the above inequality can be extended to $\CC^1(\overline{B^d})$ functions.
Thus, choosing any $K > \max(0, -1-2\lambda^{\alpha,\gamma})$, the shifted bilinear form $\tilde B \colon \CC^1(\overline{B^d}) \times \CC^1(\overline{B^d}) \to \Real$, defined by
\begin{equation}\label{shifted-B}
\tilde B(p,q) := B(p,q) + K \langle p , q \rangle_{\alpha,\gamma},
\end{equation}
is an inner product in $\CC^1(\overline{B^d})$;
we denote the induced norm by $\norm{\cdot}_{\tilde B}$.
This allows for defining an \emph{ad hoc} function space in very much the same vein of \autoref{def:DunklSobolevSpace}.

\begin{definition}\label{def:ad-hoc}
We define $\HH_{\tilde B}$ as the topological completion of $(\CC^1(\overline{B^d}), \norm{\cdot}_{\tilde B})$.
\end{definition}

\begin{proposition}\label{pro:continuousEmbedding}
There holds the inclusion $\HH^1_{\alpha,\gamma} \subseteq \HH_{\tilde B}$ and
\begin{equation*}
(\forall\,u\in\HH^1_{\alpha,\gamma}) \quad \norm{u}_{\tilde B} \leq (C_B+K)^{1/2} \norm{u}_{\alpha,\gamma;1};
\end{equation*}
that is, $\HH^1_{\alpha,\gamma}$ is continuously embedded in $\HH_{\tilde B}$.
\begin{proof}
From \autoref{def:DunklSobolevSpace}, every $u \in \HH^1_{\alpha,\gamma}$ is (a class of equivalence of) a Cauchy sequence $(u_n)_{n \in \natural}$ of $\CC^1(\overline{B^d})$ functions with respect to the norm $\norm{\cdot}_{\alpha,\gamma;1}$ of \eqref{DunklSobolevInnerProduct}.
By \eqref{brute-B-bound}, $\norm{u_m - u_n}_{\tilde B} \leq (C_B+K)^{1/2} \norm{u_m - u_n}_{\alpha,\gamma;1} \xrightarrow{m,n \to \infty} 0$, so $u \in \HH_{\tilde B}$ according to \autoref{def:ad-hoc}, and $\norm{u}_{\tilde B} = \lim_{n \to \infty} \norm{u_n}_{\tilde B} \leq (C_B+K)^{1/2} \lim_{n \to \infty} \norm{u_n}_{\alpha,\gamma;1} = (C_B+K)^{1/2} \norm{u}_{\alpha,\gamma;1}$.
\end{proof}
\end{proposition}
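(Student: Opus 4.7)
My plan rests on the observation that both $\HH^1_{\alpha,\gamma}$ and $\HH_{\tilde B}$ are constructed as topological completions of the common dense subspace $\CC^1(\overline{B^d})$ but under two different norms, and that the bound \eqref{brute-B-bound} already compares these two norms on that subspace. Concretely, for any $v\in\CC^1(\overline{B^d})$, combining the definition \eqref{shifted-B} of $\tilde B$ with \eqref{brute-B-bound} and the estimate $\norm{v}_{\alpha,\gamma}\leq \norm{v}_{\alpha,\gamma;1}$ yields
\begin{equation*}
\norm{v}_{\tilde B}^2 = B(v,v) + K\,\norm{v}_{\alpha,\gamma}^2
\leq (C_B + K)\,\norm{v}_{\alpha,\gamma;1}^2.
\end{equation*}
This is the inequality that will drive everything else.

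Next, I would translate this pointwise bound into a bound between the two completions. Given $u\in\HH^1_{\alpha,\gamma}$, by \autoref{def:DunklSobolevSpace} it is represented by a Cauchy sequence $(u_n)_{n\in\natural}\subset \CC^1(\overline{B^d})$ with respect to $\norm{\cdot}_{\alpha,\gamma;1}$. Applying the above inequality to $u_m-u_n$ shows $(u_n)$ is Cauchy with respect to $\norm{\cdot}_{\tilde B}$ as well, so by \autoref{def:ad-hoc} it represents an element of $\HH_{\tilde B}$. A standard triangle-inequality argument shows this element is independent of the chosen representative (two $\norm{\cdot}_{\alpha,\gamma;1}$-equivalent Cauchy sequences differ by a sequence tending to $0$ in $\norm{\cdot}_{\alpha,\gamma;1}$, hence, by the same inequality, in $\norm{\cdot}_{\tilde B}$), which makes the assignment $u\mapsto [(u_n)]\in\HH_{\tilde B}$ well defined. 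The norm estimate then comes from passing to the limit: $\norm{u}_{\tilde B}=\lim_n \norm{u_n}_{\tilde B}\leq (C_B+K)^{1/2}\lim_n\norm{u_n}_{\alpha,\gamma;1}=(C_B+K)^{1/2}\norm{u}_{\alpha,\gamma;1}$.

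The only delicate point, and the reason one might hesitate to bluntly write $\HH^1_{\alpha,\gamma}\subseteq \HH_{\tilde B}$, is injectivity of the induced map: one must know that if the image in $\HH_{\tilde B}$ is zero then the element of $\HH^1_{\alpha,\gamma}$ is zero. Here, however, no such issue arises because elements of both completions are themselves equivalence classes of Cauchy sequences, and the injectivity is not used in the statement beyond the identification of $\CC^1(\overline{B^d})$ as a common dense subset; the content of the proposition is just the continuity of the inclusion on that common core, which the displayed norm bound supplies. In short, the argument is a routine completion-theoretic wrap-up of \eqref{brute-B-bound}, and I expect no real obstacle.
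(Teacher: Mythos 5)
Your proposal is correct and follows essentially the same route as the paper: both reduce the claim to the elementary bound $\norm{v}_{\tilde B}^2 = B(v,v) + K\norm{v}_{\alpha,\gamma}^2 \leq (C_B+K)\norm{v}_{\alpha,\gamma;1}^2$ on $\CC^1(\overline{B^d})$ via \eqref{brute-B-bound}, then transfer it to the completions through Cauchy sequences and a passage to the limit. Your extra remarks on independence of the representative are a harmless elaboration of what the paper leaves implicit.
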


In the sequence of results \autoref{lem:Parseval-HB}, \autoref{lem:regularitySummability} and \autoref{cor:L2-truncation-error} below, we will exploit the Sturm--Liouville-type equations satisfied by our orthogonal polynomial spaces, both in its strong ($\mathcal{L}\sp{\alpha,\gamma}$-based) and weak ($B$ and $\tilde B$-based) forms, to prove that Dunkl--Sobolev regularity implies convergence rates of our truncation projector, with the error measured in $\LL^2_{\alpha,\gamma}$.
See \cite[Lem.~2.2, Lem.~2.3 and Cor.~2.4]{Figueroa:2017b} for the corresponding results in the $\gamma = 0$ case.

\begin{lemma}\label{lem:Parseval-HB}
Let $\alpha \in (-1,\infty)$ and $\gamma \in (-1,\infty)^d$.
For all $u \in \HH_{\tilde B}$, the series $\sum_{n=0}^\infty \proj\sp{\alpha,\gamma}_n(u)$ (cf.\ \eqref{spanning-consequence}) converges in $\HH_{\tilde B}$ as well.
There also holds the Parseval identity
\begin{equation*}
\left( \forall \, u \in \HH_{\tilde B} \right) \quad \norm{u}_{\tilde B}^2
= \sum_{n = 0}^\infty \left( n(n+2\lambda^{\alpha,\gamma}) + K \right) \norm{ \proj\sp{\alpha,\gamma}_n (u) }_{\alpha,\gamma}^2.
\end{equation*}
\begin{proof}
By density (cf.\ \autoref{def:ad-hoc}), \eqref{weak-Sturm-Liouville} extends to $q \in \HH_{\tilde B}$.
Adding $K \langle p_n, q \rangle_{\alpha,\gamma}$ to both sides we obtain
\begin{equation*}
(\forall \, p_n \in \mathcal{V}\sp{\alpha,\gamma}_n)\ (\forall \, q \in \HH_{\tilde B}) \quad
\tilde B(p_n,q) = \left( n(n+2\lambda^{\alpha,\gamma}) + K \right) \langle p_n , q \rangle_{\alpha,\gamma}.
\end{equation*}
Polynomials are dense in $\HH_{\tilde B}$.
Indeed, if $s \in \HH_{\tilde B}$ is $\HH_{\tilde B}$-orthogonal to $\poly^d$, by the above equality and the fact that $n(n+2\lambda^{\alpha,\gamma}) + K > 0$ for all $n \in \natural_0$, it follows that $s$ is $\LL_{\alpha,\gamma}^2$-orthogonal to $\poly^d$ as well; i.e, $s = 0$.
Now, as the $\mathcal{V}\psp{\alpha,\gamma}_n$ are finite-dimensional, there exists a Hilbert basis of $\LL^2_{\alpha,\gamma}$ consisting of $\LL^2_{\alpha,\gamma}$-orthonormal polynomials.
Such a basis can be renormalized to obtain a Hilbert basis of the closure of polynomials in $\HH_{\tilde B}$; i.e., $\HH_{\tilde B}$ itself.
The desired results then stem from the basic properties of Hilbert bases; see, e.g., \cite[Corollary~5.10]{Brezis}.
\end{proof}
\end{lemma}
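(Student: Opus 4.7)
The plan is to upgrade the weak Sturm--Liouville identity so that the orthogonal polynomial spaces $\mathcal{V}\sp{\alpha,\gamma}_n$ are simultaneously $\LL^2_{\alpha,\gamma}$-orthogonal and $\tilde B$-orthogonal. After rescaling any $\LL^2_{\alpha,\gamma}$-orthonormal basis of each $\mathcal{V}\sp{\alpha,\gamma}_n$ by $\mu_n^{-1/2}$, with $\mu_n := n(n+2\lambda^{\alpha,\gamma})+K>0$, I expect this to yield a $\tilde B$-orthonormal family in $\HH_{\tilde B}$; once paired with density of polynomials in $\HH_{\tilde B}$, the family becomes a Hilbert basis, and the standard Parseval identity delivers the lemma in one stroke.

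First I would extend \eqref{weak-Sturm-Liouville}, after adding $K \langle p_n, q \rangle_{\alpha,\gamma}$ to both sides, from $q \in \CC^1(\overline{B^d})$ to $q \in \HH_{\tilde B}$, obtaining
\[
\tilde B(p_n, q) = \mu_n \langle p_n, q \rangle_{\alpha,\gamma},
\qquad p_n \in \mathcal{V}\sp{\alpha,\gamma}_n,\ q \in \HH_{\tilde B}.
\]
The left-hand side is $\norm{\cdot}_{\tilde B}$-continuous in $q$ by definition. The right-hand side is as well because the choice $K > \max(0, -1-2\lambda^{\alpha,\gamma})$ yields a coercivity estimate $\tilde B(u,u) \geq c \norm{u}_{\alpha,\gamma}^2$ for $u \in \CC^1(\overline{B^d})$ with $c>0$, which makes the natural map $\HH_{\tilde B} \to \LL^2_{\alpha,\gamma}$ continuous, so that $\langle p_n, \cdot \rangle_{\alpha,\gamma}$ extends $\norm{\cdot}_{\tilde B}$-continuously by Cauchy--Schwarz.

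Testing the extended identity with $q = p_m \in \mathcal{V}\sp{\alpha,\gamma}_m$ for $m \neq n$ yields the cross-orthogonality $\tilde B(p_n, p_m) = 0$, while $q = p_n$ yields $\tilde B(p_n, p_n) = \mu_n \norm{p_n}_{\alpha,\gamma}^2$. For the density of $\poly^d$ in $\HH_{\tilde B}$ I would follow the strategy of \autoref{pro:polynomials-dense}: the bilinear form definition \eqref{Sturm-Liouville-bilinear-form} combined with \eqref{Dunkl-bounded-by-derivatives} immediately gives $\norm{\cdot}_{\tilde B} \lesssim \norm{\cdot}_{\CC^1(\overline{B^d})}$, so the Whitney extension and polynomial approximation arguments of that proof apply \emph{mutatis mutandis} to any $\CC^1(\overline{B^d})$ function, and hence, by a triangle inequality through a $\tilde B$-Cauchy representative, to any element of $\HH_{\tilde B}$.

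Assembling $\LL^2_{\alpha,\gamma}$-orthonormal bases $\{p_{n,j}\}_j$ of each finite-dimensional $\mathcal{V}\sp{\alpha,\gamma}_n$ and rescaling by $\mu_n^{-1/2}$ thus produces a $\tilde B$-orthonormal Hilbert basis of $\HH_{\tilde B}$, since its closed linear span contains the (already dense) polynomials. Applying the standard Parseval identity (cf.\ \cite[Corollary~5.10]{Brezis}) with $\tilde B(u, \mu_n^{-1/2} p_{n,j}) = \mu_n^{1/2} \langle u, p_{n,j} \rangle_{\alpha,\gamma}$ then yields both the $\HH_{\tilde B}$-convergence of $\sum_n \proj\sp{\alpha,\gamma}_n(u)$ and the announced formula simultaneously. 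The most delicate step is the extension of the weak eigenvalue identity to $\HH_{\tilde B}$ in step one: it hinges on the lower bound on $K$, which is precisely what makes $\langle p_n, \cdot \rangle_{\alpha,\gamma}$ dominated by $\norm{\cdot}_{\tilde B}$ and hence extendable by density.
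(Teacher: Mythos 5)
Your proposal is correct and follows essentially the same architecture as the paper's proof: extend the weak Sturm--Liouville identity to $q \in \HH_{\tilde B}$, establish density of polynomials in $\HH_{\tilde B}$, rescale $\LL^2_{\alpha,\gamma}$-orthonormal bases of the $\mathcal{V}\sp{\alpha,\gamma}_n$ by $\mu_n^{-1/2}$ to obtain a $\tilde B$-orthonormal Hilbert basis, and conclude by the standard Parseval identity. The one point where you diverge is the density step: the paper argues that any $s \in \HH_{\tilde B}$ that is $\tilde B$-orthogonal to $\poly^d$ is, via the extended eigenvalue identity and $\mu_n > 0$, also $\LL^2_{\alpha,\gamma}$-orthogonal to $\poly^d$ and hence zero, whereas you approximate directly, passing through a $\CC^1(\overline{B^d})$ representative and then a polynomial via the bound $\norm{\cdot}_{\tilde B} \lesssim \norm{\cdot}_{\alpha,\gamma;1}$ (i.e., \autoref{pro:continuousEmbedding} combined with \autoref{pro:polynomials-dense}). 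Your variant is equally valid and has the small advantage of not relying on the injectivity of the canonical map $\HH_{\tilde B} \to \LL^2_{\alpha,\gamma}$, which the paper's ``i.e., $s=0$'' step implicitly uses; your explicit justification of the continuity of $q \mapsto \langle p_n, q\rangle_{\alpha,\gamma}$ on $\HH_{\tilde B}$ via the coercivity granted by the choice of $K$ is also a welcome elaboration of the paper's terse ``by density.''
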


\begin{lemma}\label{lem:regularitySummability}
Let $\alpha \in (-1,\infty)$, $\gamma \in (-1,\infty)^d$ and $l \in \natural_0$.
Then, there exists $C = C(\alpha,\gamma,l) > 0$ such that
\begin{equation*}
\left( \forall \, u \in \HH^l_{\alpha,\gamma} \right) \quad \sum_{n = 0}^\infty \left( n(n+2\lambda^{\alpha,\gamma}) + K \right)^l \norm{ \proj\sp{\alpha,\gamma}_n(u) }_{\alpha,\gamma}^2 \leq C \norm{u}_{\alpha,\gamma;l}^2.
\end{equation*}
\begin{proof}
The $l = 0$ case is simply \eqref{parseval-L2}.
From \autoref{pro:continuousEmbedding} and \autoref{lem:Parseval-HB}, for all $u \in \HH^1_{\alpha,\gamma}$,
\begin{equation}\label{1stParsevalEst}
\sum_{n = 0}^\infty \left( n(n+2\lambda^{\alpha,\gamma}) + K \right) \norm{ \proj\sp{\alpha,\gamma}_n(u) }_{\alpha,\gamma}^2
= \norm{u}_{\tilde B}^2
\leq (C_B + K) \norm{u}_{\alpha,\gamma;1}^2,
\end{equation}
which accounts for the $l = 1$ case.

Particularizing \eqref{L-self-adjoint} to $p \in \CC^2(\overline{B^d})$ and $q \in \poly^d$ and using the symmetry of the bilinear form $B$ and the inner product of $\LL^2_{\alpha,\gamma}$, we find that
\begin{equation}\label{L-self-adjoint-2}
(\forall\,p\in\CC^2(\overline{B^d})) \ (\forall\,q\in\poly^d) \quad
\langle \mathcal{L}\sp{\alpha,\gamma}(p), q \rangle_{\alpha,\gamma}
= \langle p, \mathcal{L}\sp{\alpha,\gamma}(q) \rangle_{\alpha,\gamma}.
\end{equation}
Now, by virtue of the bound \eqref{Dunkl-bounded-by-derivatives} and the definitions \eqref{parameter-lowering-operator} and \eqref{angular-Dunkl}, the operators $\mathcal{D}\psp{\gamma}_j$, $\mathcal{D}\psp{\alpha,\gamma;\star}_j$ and $\mathcal{D}\psp{\gamma}_{i,j}$ are bounded operators between $\CC^m(\overline{B^d})$ and $\CC^{m-1}(\overline{B^d})$, $m \geq 1$.
From \autoref{def:DunklSobolevSpace} they extend to bounded operators between $\HH^m_{\alpha,\gamma}$ and $\HH^{m-1}_{\alpha,\gamma}$.
Using these extended first-order operators in the definition of $\mathcal{L}\sp{\alpha,\gamma}$ in \eqref{L-BF-form}, the resulting extended $\mathcal{L}\sp{\alpha,\gamma}$ and $\mathcal{L}\sp{\alpha,\gamma} + K \, I$ operators are bounded maps between $\HH^m_{\alpha,\gamma}$ to $\HH^{m-2}_{\alpha,\gamma}$, $m \geq 2$.
The $m = 2$ case allows for extending \eqref{L-self-adjoint-2} to
\begin{equation}\label{L-self-adjoint-3}
(\forall\,u\in\HH^2_{\alpha,\gamma}) \ (\forall\,q\in\poly^d) \quad
\langle \mathcal{L}\sp{\alpha,\gamma}(u), q \rangle_{\alpha,\gamma}
= \langle u, \mathcal{L}\sp{\alpha,\gamma}(q) \rangle_{\alpha,\gamma}.
\end{equation}
Then, for all $u \in \HH^2_{\alpha,\gamma}$ and $q \in \mathcal{V}\sp{\alpha,\gamma}_n$,
\begin{multline*}
\langle \proj\sp{\alpha,\gamma}_n(\mathcal{L}\sp{\alpha,\gamma}_n(u)), q \rangle_{\alpha,\gamma}
= \langle \mathcal{L}\sp{\alpha,\gamma}_n(u), q \rangle_{\alpha,\gamma}
\stackrel{\eqref{L-self-adjoint-3}}{=} \langle u, \mathcal{L}\sp{\alpha,\gamma}_n(q) \rangle_{\alpha,\gamma}\\
\stackrel{\eqref{eigenvalue-problem-strong}}{=} n(n+2\lambda^{\alpha,\gamma}) \langle u, q \rangle_{\alpha,\gamma}
= n(n+2\lambda^{\alpha,\gamma}) \langle \proj\sp{\alpha,\gamma}_n(u), q \rangle_{\alpha,\gamma},
\end{multline*}
whence
\begin{equation}\label{L-proj-ew-pump}
(\forall\,u\in\HH^2_{\alpha,\gamma}) \quad
\proj\sp{\alpha,\gamma}_n(\mathcal{L}\sp{\alpha,\gamma}(u)) = n(n+2\lambda^{\alpha,\gamma}) \proj\sp{\alpha,\gamma}_n(u).
\end{equation}
Therefore, if $l \geq 2$ is even, our desired result stems from
\begin{multline*}
\sum_{n=0}^\infty \left( n(n+2\lambda\sp{\alpha,\gamma}) + K \right)^l \norm{\proj\sp{\alpha,\gamma}_n(u)}_{\alpha,\gamma}^2\\
\stackrel{\eqref{L-proj-ew-pump}}{=} \sum_{n=0}^\infty \norm{\proj\sp{\alpha,\gamma}_n\left((\mathcal{L}\sp{\alpha,\gamma}+K\,I)^{l/2}(u)\right)}_{\alpha,\gamma}^2
= \norm{(\mathcal{L}\sp{\alpha,\gamma}+K\,I)^{l/2}(u)}_{\alpha,\gamma}^2\\
\leq \norm{(\mathcal{L}\sp{\alpha,\gamma}+K\,I)^{l/2}}_{\mathcal{L}(\HH^l_{\alpha,\gamma},\LL^2_{\alpha,\gamma})}^2 \norm{u}_{\alpha,\gamma;l}^2.
\end{multline*}
Finally, if $l \geq 3$ is odd,
\begin{multline*}
\sum_{n=0}^\infty \left( n(n+2\lambda\sp{\alpha,\gamma}) + K \right)^l \norm{\proj\sp{\alpha,\gamma}_n(u)}_{\alpha,\gamma}^2\\
\stackrel{\eqref{L-proj-ew-pump}}{=} \sum_{n=0}^\infty \left( n(n+2\lambda\sp{\alpha,\gamma}) + K \right) \norm{\proj\sp{\alpha,\gamma}_n\left((\mathcal{L}\sp{\alpha,\gamma}+K\,I)^{(l-1)/2}(u)\right)}_{\alpha,\gamma}^2\\
\stackrel{\eqref{1stParsevalEst}}{=} (C_B + K) \norm{(\mathcal{L}\sp{\alpha,\gamma}+K\,I)^{(l-1)/2}(u)}_{\alpha,\gamma;1}^2\\
\leq (C_B + K) \norm{(\mathcal{L}\sp{\alpha,\gamma}+K\,I)^{(l-1)/2}}_{\mathcal{L}(\HH^l_{\alpha,\gamma},\HH^1_{\alpha,\gamma})}^2 \norm{u}_{\alpha,\gamma;l}^2.
\end{multline*}
\end{proof}
\end{lemma}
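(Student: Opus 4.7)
The plan is to induct on $l$, using the Sturm--Liouville operator $\mathcal{L}\sp{\alpha,\gamma}$ as an eigenvalue-pumping device: each application extracts (roughly) two powers of the factor $n(n+2\lambda^{\alpha,\gamma})+K$ through the projectors $\proj\sp{\alpha,\gamma}_n$, while trading in two orders of Dunkl regularity. The base cases are already available: $l=0$ is Parseval \eqref{parseval-L2}, and $l=1$ follows at once by combining \autoref{lem:Parseval-HB} (which equates the sum with $\norm{u}_{\tilde B}^2$) with the continuous embedding $\HH^1_{\alpha,\gamma}\hookrightarrow\HH_{\tilde B}$ of \autoref{pro:continuousEmbedding}.

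For the inductive step I would first upgrade the weak self-adjointness \eqref{L-self-adjoint} together with the strong eigenvalue relation \eqref{eigenvalue-problem-strong} into the spectral identity
\begin{equation*}
\proj\sp{\alpha,\gamma}_n\!\left(\mathcal{L}\sp{\alpha,\gamma}(u)\right) = n(n+2\lambda^{\alpha,\gamma})\,\proj\sp{\alpha,\gamma}_n(u),
\end{equation*}
initially for $u\in\CC^2(\overline{B^d})$ (by testing against $q\in\mathcal{V}\sp{\alpha,\gamma}_n$ and swapping $\mathcal{L}\sp{\alpha,\gamma}$ onto the polynomial side via \eqref{L-self-adjoint}) and then, by density of $\CC^2(\overline{B^d})$ and the fact that each first-order operator appearing in the expression \eqref{L-BF-form} for $\mathcal{L}\sp{\alpha,\gamma}$ is bounded $\HH^m_{\alpha,\gamma}\to\HH^{m-1}_{\alpha,\gamma}$ (a consequence of \autoref{pro:Hadamard} and \eqref{Dunkl-bounded-by-derivatives}), to all $u\in\HH^2_{\alpha,\gamma}$. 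Once this identity is available, for even $l\geq 2$ I would iterate $(\mathcal{L}\sp{\alpha,\gamma}+K\,I)$ a total of $l/2$ times, obtaining
\begin{equation*}
\sum_{n=0}^\infty \!\bigl(n(n+2\lambda^{\alpha,\gamma})+K\bigr)^l\,\norm{\proj\sp{\alpha,\gamma}_n(u)}_{\alpha,\gamma}^2 = \norm{(\mathcal{L}\sp{\alpha,\gamma}+K\,I)^{l/2}(u)}_{\alpha,\gamma}^2
\end{equation*}
via Parseval, and controlling the right-hand side by $\norm{u}_{\alpha,\gamma;l}^2$ through the bounded composition $\HH^l_{\alpha,\gamma}\to\LL^2_{\alpha,\gamma}$. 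For odd $l\geq 3$ I would apply $(l-1)/2$ powers first to land in $\HH^1_{\alpha,\gamma}$ and then invoke the $l=1$ estimate to absorb the final factor.

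The main obstacle I anticipate is the density/extension step: the eigenvalue-pumping identity is essentially algebraic on polynomials, but I need it on all of $\HH^2_{\alpha,\gamma}$ in order to push the argument past $l=1$. Verifying that $\mathcal{L}\sp{\alpha,\gamma}$ defined through \eqref{L-BF-form} really does extend to a bounded map $\HH^m_{\alpha,\gamma}\to\HH^{m-2}_{\alpha,\gamma}$ requires collecting the mapping properties of $\mathcal{D}\psp{\gamma}_j$, $\mathcal{D}\psp{\alpha,\gamma;\star}_j$ and $\mathcal{D}\psp{\gamma}_{i,j}$ and checking that the lower-order terms involving $\Skew_i$ are harmless; this is a bit bookkeeping-heavy but entirely mechanical. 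Everything else is a clean application of Parseval combined with the spectral identity.
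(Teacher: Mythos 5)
Your proposal matches the paper's proof essentially step for step: the same base cases, the same spectral identity $\proj\sp{\alpha,\gamma}_n(\mathcal{L}\sp{\alpha,\gamma}(u)) = n(n+2\lambda^{\alpha,\gamma})\proj\sp{\alpha,\gamma}_n(u)$ extended to $\HH^2_{\alpha,\gamma}$ by density and boundedness of the first-order operators, and the same even/odd split iterating $(\mathcal{L}\sp{\alpha,\gamma}+K\,I)$ with Parseval (even case) or the $l=1$ estimate (odd case). The approach is correct and identical to the paper's.
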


\begin{corollary}\label{cor:L2-truncation-error}
For all $\alpha \in (-1,\infty)$, $d \in \natural$, $\gamma \in (-1,\infty)^d$ and $l \in \natural_0$, there exists $C = C(\alpha,\gamma,l)$ such that
\begin{equation*}
(\forall \, N \in \natural_0) \, (\forall \, u \in \HH^l_{\alpha,\gamma}) \quad \norm{u - S\sp{\alpha,\gamma}_N (u)}_{\alpha,\gamma} \leq C (N + 1)^{-l} \norm{u}_{\alpha,\gamma;l}.
\end{equation*}
\begin{proof}
This is a direct consequence of the Parseval identity \eqref{parseval-L2}, \autoref{lem:regularitySummability} and the fact that $n(n+2\lambda^{\alpha,\gamma})+K$ depends quadratically on $n$.
\end{proof}
\end{corollary}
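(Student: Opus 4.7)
The plan is to deduce this directly from the two ingredients the author flags: Parseval's identity \eqref{parseval-L2}, and the weighted summability bound of \autoref{lem:regularitySummability}. By the orthogonal decomposition of $\LL^2_{\alpha,\gamma}$ into the spaces $\mathcal{V}\sp{\alpha,\gamma}_n$ and the characterization of $S\sp{\alpha,\gamma}_N$ in \eqref{spanning-consequence}, the truncation error admits the Parseval representation
\begin{equation*}
\norm{u - S\sp{\alpha,\gamma}_N(u)}_{\alpha,\gamma}^2 = \sum_{n=N+1}^\infty \norm{\proj\sp{\alpha,\gamma}_n(u)}_{\alpha,\gamma}^2.
\end{equation*}

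The next step is to insert and extract the eigenvalue factor $n(n+2\lambda\sp{\alpha,\gamma}) + K$ to the $l$-th power. Since the polynomial $t \mapsto t(t+2\lambda\sp{\alpha,\gamma}) + K$ has positive leading coefficient, is strictly positive on $\natural_0$ (this is precisely how $K$ was chosen below \eqref{shifted-B}), and grows quadratically at infinity, there is a constant $c = c(\alpha,\gamma,K) > 0$ such that
\begin{equation*}
(\forall \, n \in \natural_0) \quad n(n+2\lambda\sp{\alpha,\gamma}) + K \geq c \, (n+1)^2.
\end{equation*}
In particular, for $n \geq N+1$ we have $\bigl(n(n+2\lambda\sp{\alpha,\gamma}) + K\bigr)^l \geq c^l (N+1)^{2l}$, which lets us majorize each tail summand by the corresponding term of the series appearing in \autoref{lem:regularitySummability}:
\begin{equation*}
\norm{\proj\sp{\alpha,\gamma}_n(u)}_{\alpha,\gamma}^2 \leq \frac{1}{c^l \, (N+1)^{2l}} \bigl(n(n+2\lambda\sp{\alpha,\gamma}) + K\bigr)^l \norm{\proj\sp{\alpha,\gamma}_n(u)}_{\alpha,\gamma}^2.
\end{equation*}

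Summing over $n \geq N+1$ and invoking \autoref{lem:regularitySummability} (extending the sum back to $n \geq 0$ is harmless since every term is nonnegative) gives
\begin{equation*}
\norm{u - S\sp{\alpha,\gamma}_N(u)}_{\alpha,\gamma}^2 \leq \frac{1}{c^l \, (N+1)^{2l}} \sum_{n=0}^\infty \bigl(n(n+2\lambda\sp{\alpha,\gamma}) + K\bigr)^l \norm{\proj\sp{\alpha,\gamma}_n(u)}_{\alpha,\gamma}^2 \leq \frac{C}{(N+1)^{2l}} \norm{u}_{\alpha,\gamma;l}^2,
\end{equation*}
and taking square roots yields the claim with constant depending only on $\alpha$, $\gamma$, and $l$ (noting that $K$ and $c$ were fixed in terms of those parameters). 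There is no real obstacle here; the only thing to verify is the elementary quadratic lower bound on $n(n+2\lambda\sp{\alpha,\gamma}) + K$, which is immediate from the choice of $K$ made in \eqref{shifted-B} and the positivity of its leading coefficient.
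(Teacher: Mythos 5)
Your proposal is correct and is precisely the argument the paper compresses into its one-line proof: Parseval for the tail $\sum_{n>N}\norm{\proj\sp{\alpha,\gamma}_n(u)}_{\alpha,\gamma}^2$, the elementary lower bound $n(n+2\lambda^{\alpha,\gamma})+K\geq c\,(n+1)^2$ (valid by the choice of $K$ after \eqref{shifted-B} and the quadratic growth), and \autoref{lem:regularitySummability}. Nothing further is needed.
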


\autoref{approx-ineq} below allows for quantifying the $\LL^2_{\alpha,\gamma}$ norm of a member of $\mathcal{V}\sp{\alpha+1}_k$ with respect to its $\LL^2_{\alpha+1,\gamma}$ norm, thus containing the seed of the quantification of the price to be paid in our main result \autoref{thm:main} because of the mismatch of the orthogonal projector there and the norm the approximation error is measured with; its third part is a Dunkl variant of the Markov brothers' inequality.
However, we need the following technical proposition first.

\begin{proposition}\label{pro:difference-bound}
Let $\alpha \in (-1,\infty)$ and $\gamma \in (-1,\infty)^d$.
Then, there exists $M_{\alpha,\gamma} > 0 $ such that
\begin{multline*}
(\forall\,p\in\LL^2_{\alpha,\gamma}) \quad
- 2 \lambda^{\alpha,\gamma} \sum_{i=1}^d \gamma_i \norm{\Skew_i(p)}_{\alpha,\gamma}^2\\
+ \sum_{i=1}^d \sum_{j=1}^d \gamma_i \gamma_j \norm{\Skew_i(\Skew_j(p))}_{\alpha,\gamma}^2
\geq - M_{\alpha,\gamma} \norm{p}_{\alpha,\gamma}^2.
\end{multline*}
\begin{proof}
This comes from the fact that the $\Skew_j$ operators are bounded in $\LL^2_{\alpha,\gamma}$.
\end{proof}
\end{proposition}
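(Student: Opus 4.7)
The plan is to reduce everything to the elementary observation that each $\Skew_j$ is a contraction on $\LL^2_{\alpha,\gamma}$.

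First I would note that the weight $W_{\alpha,\gamma}$ is $\sigma_j$-invariant for every $j$, so the change of variable $x \mapsto \sigma_j x$ is an isometry of $\LL^2_{\alpha,\gamma}$; consequently $\norm{\sigma_j^* p}_{\alpha,\gamma} = \norm{p}_{\alpha,\gamma}$ for all $p \in \LL^2_{\alpha,\gamma}$. Combined with the definition $\Skew_j(p) = (p - \sigma_j^* p)/2$ in \eqref{sym-and-skew} and the triangle inequality, this yields $\norm{\Skew_j(p)}_{\alpha,\gamma} \leq \norm{p}_{\alpha,\gamma}$. Applying this successively gives $\norm{\Skew_i(\Skew_j(p))}_{\alpha,\gamma} \leq \norm{\Skew_j(p)}_{\alpha,\gamma} \leq \norm{p}_{\alpha,\gamma}$ for all $i, j \in \{1, \dotsc, d\}$.

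Next I would estimate each of the two sums in absolute value. For the linear sum, the triangle inequality gives
\begin{equation*}
\Bigl| 2 \lambda^{\alpha,\gamma} \sum_{i=1}^d \gamma_i \norm{\Skew_i(p)}_{\alpha,\gamma}^2 \Bigr| \leq 2 \abs{\lambda^{\alpha,\gamma}} \Bigl( \sum_{i=1}^d \abs{\gamma_i} \Bigr) \norm{p}_{\alpha,\gamma}^2,
\end{equation*}
and likewise for the bilinear sum,
\begin{equation*}
\Bigl| \sum_{i=1}^d \sum_{j=1}^d \gamma_i \gamma_j \norm{\Skew_i(\Skew_j(p))}_{\alpha,\gamma}^2 \Bigr| \leq \Bigl( \sum_{i=1}^d \abs{\gamma_i} \Bigr)^2 \norm{p}_{\alpha,\gamma}^2.
\end{equation*}
Setting
\begin{equation*}
M_{\alpha,\gamma} := 2 \abs{\lambda^{\alpha,\gamma}} \sum_{i=1}^d \abs{\gamma_i} + \Bigl( \sum_{i=1}^d \abs{\gamma_i} \Bigr)^2
\end{equation*}
then delivers the stated lower bound immediately by combining the two inequalities.

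There is no real obstacle here: the signs of the $\gamma_i$ are uncontrolled, so one cannot hope for a nonnegativity statement, but discarding signs via the triangle inequality and $\LL^2_{\alpha,\gamma}$-contractivity of the $\Skew_j$ is exactly enough to extract a one-sided bound of the form claimed.
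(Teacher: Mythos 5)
Your proof is correct and takes essentially the same route as the paper, whose proof simply invokes the boundedness of the $\Skew_j$ operators on $\LL^2_{\alpha,\gamma}$; your explicit constant $M_{\alpha,\gamma} = 2\abs{\lambda^{\alpha,\gamma}}\sum_{i=1}^d\abs{\gamma_i} + \bigl(\sum_{i=1}^d\abs{\gamma_i}\bigr)^2$ is exactly the quantity $\tilde C$ that reappears later in the proof of \autoref{pro:H1-equivalence}. The only cosmetic point is that when $\gamma = 0$ your constant vanishes while the statement asks for $M_{\alpha,\gamma} > 0$, but since enlarging the constant only weakens the inequality this is immaterial.
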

\begin{proposition}\label{approx-ineq}
Let $\alpha \in (-1,\infty)$ and $\gamma \in (-1,\infty)^d$.
\begin{enumerate}
\item\label{extendedOrthogonality} For all $p,q \in \mathcal V\sp{\alpha+1,\gamma}_k$,
\begin{equation*}
\langle p , q \rangle_{\alpha,\gamma}=\left(\frac{k+d/2+\sum_{j=1}^d \gamma_j/2}{\alpha+1}+1 \right) \langle p , q \rangle_{\alpha+1,\gamma}.
\end{equation*}
\item\label{grad-estimation} Let $k \in \natural_0$.
Then, for all $r \in \mathcal{V}\sp{\alpha,\gamma}_k$,
\begin{equation*}
\norm[n]{\mathcal{D}\psp{\gamma} r}_{\alpha,\gamma} \leq \left( \frac{(k (k+2\lambda^{\alpha,\gamma}) + M_{\alpha,\gamma})(k + \lambda\sp{\alpha,\gamma})}{\alpha+1} \right)^{1/2} \norm{r}_{\alpha,\gamma},
\end{equation*}
where $M_{\alpha,\gamma} > 0$ is that of \autoref{pro:difference-bound}.
If $r$ is, additionally, a radial function, this inequality turns into an equality by replacing $M_{\alpha,\gamma}$ with $0$.
\item\label{markov-ineq} There exists a constant $C = C(\alpha,\gamma) > 0$ such that, for all $n \in \natural_0$ and $p \in \poly^d_n$,
\begin{equation*}
\norm[n]{\mathcal{D}\psp{\gamma} p}_{\alpha,\gamma} \leq C n^2 \norm{p}_{\alpha,\gamma}.
\end{equation*}
\end{enumerate}
\begin{proof}
On homogeneous polynomials of degree $k$, $k \in \natural_0$, there holds $x \cdot \nabla = k\,I$.
As a first consequence, $x \cdot \nabla$ maps $\poly^d_n$ into itself, for every $n \in \natural_0$.

Let $p,q \in \mathcal{V}\sp{\alpha+1,\gamma}_k$.
As every member of $\mathcal{V}\sp{\alpha+1,\gamma}_k$ is a linear combination of homogeneous polynomials of degree ranging from $0$ to $k$, there exists a homogeneous polynomial $s_p$ of degree $k$ such that $p - s_p \in \poly^d_{k-1}$ and hence $x \cdot \nabla p - x \cdot \nabla s_p \in \poly^d_{k-1}$.
Thus,
\begin{equation}\label{nabla_k}
\langle x \cdot \nabla p , q \rangle_{\alpha+1,\gamma} = \langle x \cdot \nabla s_p , q \rangle_{\alpha+1,\gamma} = k \langle s_p , q \rangle_{\alpha+1,\gamma} = k \langle p , q \rangle_{\alpha+1,\gamma}.
\end{equation}
Using the fact that $\div(x) = d$ and \eqref{nabla_k} (which is still valid if the roles of $p$ and $q$ are interchanged),
\begin{multline}\label{div_as_T}
(2k+d) \langle p , q \rangle_{\alpha+1,\gamma}
= \langle x \cdot \nabla p , q \rangle_{\alpha+1,\gamma} + \langle p , x \cdot \nabla q \rangle_{\alpha+1,\gamma} + d \langle p , q \rangle_{\alpha+1,\gamma}\\
= \int_{B^d} \div{\left( p(x)q(x)x \right)} W_{\alpha+1,\gamma}(x) \dd x.
\end{multline}
Now,
\begin{multline*}
\int_{B^d} \div{\left( p(x)q(x)x \right)} \, W_{\alpha+1,\gamma}(x) \dd x + \sum_{j=1}^d \gamma_j \langle p , q \rangle_{\alpha+1,\gamma }
= \sum_{j=1}^d \langle \mathcal{D}\psp{\gamma}_j (x_j pq) , 1 \rangle_{\alpha+1,\gamma}\\
= \sum_{j=1}^d \langle x_j p q , \mathcal{D}\psp{\alpha,\gamma;\star}_j(1) \rangle_{\alpha,\gamma}
= 2(\alpha+1)\int_{B^d} p(x)q(x) \norm{x}^2 W_{\alpha,\gamma}(x) \dd x,
\end{multline*}
where the first equality comes from the definition \eqref{parameter-raising-operator} and part \ref{it:odd-has-zero-mean} of \autoref{pro:flip}, the second from part \ref{it:adjoint-relation} of \autoref{pro:diff-shift} and the third from the definition \eqref{parameter-lowering-operator}.
Substituting this into \eqref{div_as_T}, yields
\begin{equation*}
(2k+d) \langle p , q \rangle_{\alpha+1,\gamma} = 2(\alpha+1)\int_{B^d} p(x)q(x) \norm{x}^2 W_{\alpha,\gamma}(x) \dd x - \sum_{j=1}^d \gamma_j \langle p , q \rangle_{\alpha+1,\gamma}.
\end{equation*}
Part \ref{extendedOrthogonality} then follows from the fact that $W_{\alpha,\gamma}(x)=\norm{x}^2 W_{\alpha,\gamma}(x) + W_{\alpha+1,\gamma}(x)$.

Part \ref{grad-estimation} is obviously true if $k = 0$; otherwise, from part \ref{it:parameter-raising} of \autoref{pro:diff-shift} and part \ref{extendedOrthogonality} above,
\begin{equation}\label{extendedOrthogonality-aux}
(\forall\,r \in \mathcal{V}\sp{\alpha,\gamma}_k) \quad
\norm[n]{\mathcal{D}\psp{\gamma} r}_{\alpha,\gamma}^2
= \frac{k + \lambda\sp{\alpha,\gamma}}{\alpha + 1} \norm[n]{\mathcal{D}\psp{\gamma} r}_{\alpha+1,\gamma}^2.
\end{equation}
On the other hand, from \eqref{Sturm-Liouville-bilinear-form} and \eqref{weak-Sturm-Liouville} (with $p_n$ and $q$ there both set as $r$),
\begin{multline*}
\norm[n]{\mathcal{D}\sp{\gamma} r}_{\alpha+1,\gamma}^2
+ \sum_{1\leq i<j\leq d} \norm[n]{\mathcal{D}\sp{\gamma}_{i,j} r}_{\alpha,\gamma}^2
- 2 \lambda^{\alpha,\gamma} \sum_{i=1}^d \gamma_i \norm{\Skew_i(r)}_{\alpha,\gamma}^2\\
+ \sum_{i=1}^d \sum_{j=1}^d \gamma_i \gamma_j \norm{\Skew_i(\Skew_j(r))}_{\alpha,\gamma}^2
+ M_{\alpha,\gamma} \norm{r}_{\alpha,\gamma}^2\\
= \left(k(k+2\lambda^{\alpha,\gamma}) + M_{\alpha,\gamma}\right) \norm{r}_{\alpha,\gamma}^2.
\end{multline*}
Per \autoref{pro:difference-bound}, dropping the second, third, fourth and fifth terms from the left-hand side of the above equality, the remaining first term will be bounded from above by the right-hand side.
Combining the resulting inequality with \eqref{extendedOrthogonality-aux} and taking square roots results in the generic case of part \ref{grad-estimation}.
If $r$ is radial, the second, third and fourth terms on the left-hand side above vanish, and $M_{\alpha,\gamma}$ can be canceled from both sides; what now remains an equality can also be combined with \eqref{extendedOrthogonality-aux}.

Given $n \in \natural_0$ and $p \in \poly^d_n$, from \eqref{spanning-consequence}, part \ref{grad-estimation} above, and the Cauchy--Schwarz inequality,
\begin{align*}
\norm[n]{\mathcal{D}\psp{\gamma} p}_{\alpha,\gamma}
& \leq \sum_{k=0}^n \norm[n]{\mathcal{D}\psp{\gamma} \proj\sp{\alpha,\gamma}_k (p)}_{\alpha,\gamma}\\
& \leq \left( \sum_{k=0}^n \frac{(k (k+2\lambda\sp{\alpha,\gamma}) + M_{\alpha,\gamma})(k + \lambda\sp{\alpha,\gamma})}{\alpha+1} \right)^{1/2} \left( \sum_{k=0}^n \norm{\proj\sp{\alpha,\gamma}_k (p)}_{\alpha,\gamma}^2 \right)^{1/2}\\
& = \left( \frac{(n+1)(n + 2 \lambda\sp{\alpha,\gamma}) (n^2 + 2 \lambda\sp{\alpha,\gamma}n + n +2 M_{\alpha,\gamma})}{4 (\alpha+1)} \right)^{1/2} \norm{p}_{\alpha,\gamma}.
\end{align*}
Part \ref{markov-ineq} then follows after realizing that there exists a positive constant $C$ depending on $\alpha$ and $\gamma$ only such that $\frac{(n+1)(n + 2 \lambda\sp{\alpha,\gamma}) (n^2 + 2 \lambda\sp{\alpha,\gamma}n + n +2 M_{\alpha,\gamma})}{4 (\alpha+1)} \leq C^2 n^4$ for all $n \in \natural_0$.
\end{proof}
\end{proposition}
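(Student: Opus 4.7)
\textbf{Part \ref{extendedOrthogonality}.} My starting point is the pointwise identity $W_{\alpha,\gamma}(x) = W_{\alpha+1,\gamma}(x) + \norm{x}^2 W_{\alpha,\gamma}(x)$, which gives
\begin{equation*}
\langle p, q \rangle_{\alpha,\gamma} = \langle p, q \rangle_{\alpha+1,\gamma} + \int_{B^d} p(x) q(x) \norm{x}^2 W_{\alpha,\gamma}(x) \dd x,
\end{equation*}
so the task reduces to evaluating the last integral. I would decompose $p$ (and analogously $q$) into homogeneous components, apply Euler's identity $(x\cdot\nabla) h_j = j h_j$, and use the $\LL^2_{\alpha+1,\gamma}$-orthogonality of $p,q$ to $\poly^d_{k-1}$ (together with the fact that $x \cdot \nabla$ preserves $\poly^d_{k-1}$) to deduce $\langle x\cdot\nabla p, q \rangle_{\alpha+1,\gamma} = k \langle p,q \rangle_{\alpha+1,\gamma}$, and symmetrically for $q$. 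Adding these and using $\div(pqx) = (x\cdot\nabla)(pq) + d\,pq$ yields $(2k+d)\langle p,q \rangle_{\alpha+1,\gamma} = \int_{B^d} \div(pqx) W_{\alpha+1,\gamma} \dd x$. The right-hand side can then be rewritten as $\sum_j \langle \mathcal{D}\psp{\gamma}_j(x_j pq), 1 \rangle_{\alpha+1,\gamma}$ minus a correction from the $\rho_j$ terms, which a short $\sigma_j$ change of variables identifies as $\sum_j \gamma_j \langle p,q \rangle_{\alpha+1,\gamma}$; applying the adjoint relation in part \ref{it:adjoint-relation} of \autoref{pro:diff-shift} together with $\mathcal{D}\psp{\alpha,\gamma;\star}_j(1) = 2(\alpha+1) x_j$ then converts the sum into $2(\alpha+1) \int p q \norm{x}^2 W_{\alpha,\gamma} \dd x$. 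Solving for the target integral and collecting terms produces the claimed identity.

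\textbf{Part \ref{grad-estimation}.} I would begin by invoking part \ref{it:parameter-raising} of \autoref{pro:diff-shift} to ensure $\mathcal{D}\psp{\gamma}_j r \in \mathcal{V}\psp{\alpha+1,\gamma}_{k-1}$, so that part \ref{extendedOrthogonality} applied componentwise and summed over $j$ gives
\begin{equation*}
\norm[n]{\mathcal{D}\psp{\gamma} r}_{\alpha,\gamma}^2 = \frac{k+\lambda\sp{\alpha,\gamma}}{\alpha+1} \norm[n]{\mathcal{D}\psp{\gamma} r}_{\alpha+1,\gamma}^2.
\end{equation*}
To control the right-hand side, I would test the weak Sturm--Liouville equation \eqref{weak-Sturm-Liouville} against $q = r$, yielding $B(r,r) = k(k+2\lambda\sp{\alpha,\gamma}) \norm{r}_{\alpha,\gamma}^2$; unpacking \eqref{Sturm-Liouville-bilinear-form}, I drop the nonnegative angular contributions $\sum_{i<j} \norm{\mathcal{D}\psp{\gamma}_{i,j} r}_{\alpha,\gamma}^2$ and apply the lower bound from \autoref{pro:difference-bound} to the Skew-based terms, arriving at $\norm{\mathcal{D}\psp{\gamma} r}_{\alpha+1,\gamma}^2 \leq (k(k+2\lambda\sp{\alpha,\gamma}) + M_{\alpha,\gamma})\norm{r}_{\alpha,\gamma}^2$. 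Composing this with the previous display gives the main estimate. For radial $r$, $\Skew_j(r) = 0$ and $\mathcal{D}\psp{\gamma}_{i,j} r = 0$, so all dropped terms vanish identically and the inequality becomes an equality with $M_{\alpha,\gamma}$ replaced by $0$.

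\textbf{Part \ref{markov-ineq}.} For a general $p \in \poly^d_n$, the natural route is to use the decomposition \eqref{spanning-consequence} to write $p = \sum_{k=0}^n \proj\sp{\alpha,\gamma}_k(p)$, apply the triangle inequality, bound each $\norm{\mathcal{D}\psp{\gamma} \proj\sp{\alpha,\gamma}_k(p)}_{\alpha,\gamma}$ via part \ref{grad-estimation} (whose constant grows like $k^{3/2}$), and close with Cauchy--Schwarz combined with Parseval \eqref{parseval-L2} to obtain $\norm[n]{\mathcal{D}\psp{\gamma} p}_{\alpha,\gamma} \leq (\sum_{k=0}^n C_k^2)^{1/2} \norm{p}_{\alpha,\gamma}$; since $\sum_{k=0}^n k^3 = O(n^4)$, this yields the desired $O(n^2)$ Markov factor. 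The main technical hurdle lies in part \ref{extendedOrthogonality}: passing from $\int \div(pqx) W_{\alpha+1,\gamma}$ to an integral against $\norm{x}^2 W_{\alpha,\gamma}$ forces the introduction of the Dunkl operators $\mathcal{D}\psp{\gamma}_j$ inside the integral, and the associated $\rho_j$ correction terms must be tracked carefully (including the change of variables under $\sigma_j$) so that the final constant matches $(k + d/2 + \sum_j \gamma_j/2)/(\alpha+1) + 1$; any mishandled factor of $\gamma_j$ or sign propagates directly into the result.
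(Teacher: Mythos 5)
Your proposal is correct and follows essentially the same route as the paper's proof: the Euler-identity computation of $\langle x\cdot\nabla p, q\rangle_{\alpha+1,\gamma}$, the divergence identity converted through $\sum_j \langle \mathcal{D}\psp{\gamma}_j(x_j pq),1\rangle_{\alpha+1,\gamma}$ and the adjoint relation for part \ref{extendedOrthogonality}; the combination of part \ref{it:parameter-raising} of \autoref{pro:diff-shift}, part \ref{extendedOrthogonality}, the weak Sturm--Liouville form tested against $r$ and \autoref{pro:difference-bound} for part \ref{grad-estimation}; and the triangle inequality plus Cauchy--Schwarz and Parseval for part \ref{markov-ineq}. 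The only cosmetic difference is that you begin part \ref{extendedOrthogonality} from the weight decomposition $W_{\alpha,\gamma}=W_{\alpha+1,\gamma}+\norm{x}^2 W_{\alpha,\gamma}$ whereas the paper invokes it at the end.
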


Now we prove a lemma with the core of the main result, a bridging corollary and then, finally, the main result itself.

\begin{lemma}\label{lem:T-proj-commute}
Let $\alpha \in (-1,\infty)$, $\gamma \in (-1,\infty)^d$ and $l \in \natural$.
Then, there exists $C = C(\alpha,\gamma,l) > 0$ such that for all $u \in \HH^l_{\alpha,\gamma}$, $n \in \natural$ and $j \in \{1,\dotsc,d\}$,
\begin{equation*}
\norm{ \mathcal{D}\psp{\gamma}_j S\sp{\alpha,\gamma}_n (u) - S\sp{\alpha,\gamma}_n (\mathcal{D}\psp{\gamma}_j u) }_{\alpha,\gamma}
\leq C \, n^{3/2 - l} \norm{ \mathcal{D}\psp{\gamma}_j u }_{\alpha,\gamma;l-1}.
\end{equation*}
\begin{proof}
Let us first assume that $u \in \CC^l (\overline{B^d})$.
Combining part \ref{it:id-shift} of \autoref{pro:id-shift} and \ref{diff-shift} of \autoref{pro:diff-shift}, we obtain
\begin{multline}\label{T-proj-commute}
\mathcal{D}\psp{\gamma}_j \proj\sp{\alpha,\gamma}_{k+1} (u) - \proj\sp{\alpha,\gamma}_k (\mathcal{D}\psp{\gamma}_j u)\\
= \proj\sp{\alpha+1,\gamma}_k \circ \proj\sp{\alpha,\gamma}_{k+2} (\mathcal{D}\psp{\gamma}_j u) - \proj\sp{\alpha+1,\gamma}_{k-2} \circ \proj\sp{\alpha,\gamma}_k (\mathcal{D}\psp{\gamma}_j u)
\end{multline}
Using \eqref{spanning-consequence} to express $S\sp{\alpha,\gamma}_n$ in terms of the $\proj\sp{\alpha,\gamma}_k$, using \eqref{T-proj-commute}, noticing that a telescoping sum results and using part \ref{it:unnamed} of \autoref{pro:id-shift} to expand an appearance of $\proj\sp{\alpha,\gamma}_n (\mathcal{D}\psp{\gamma}_j u) \in \mathcal{V}\sp{\alpha,\gamma}$,
\begin{multline}\label{commute-diff}
\mathcal{D}\psp{\gamma}_j S\sp{\alpha,\gamma}_n (u) - S\sp{\alpha,\gamma}_n (\mathcal{D}\psp{\gamma}_j u)
= \sum_{k = 0}^n \mathcal{D}\psp{\gamma}_j \proj\sp{\alpha,\gamma}_k(u) - \sum_{k = 0}^n \proj\sp{\alpha,\gamma}_k (\mathcal{D}\psp{\gamma}_j u)\\
= \sum_{k = 0}^{n-1} \left( \mathcal{D}\psp{\gamma}_j \proj\sp{\alpha,\gamma}_{k+1} (u) - \proj\sp{\alpha,\gamma}_k (\mathcal{D}\psp{\gamma}_j u) \right) - \proj\sp{\alpha,\gamma}_n (\mathcal{D}\psp{\gamma}_j u)\\
= \proj\sp{\alpha+1,\gamma}_{n-2} \circ \proj\sp{\alpha,\gamma}_n (\mathcal{D}\psp{\gamma}_j u) + \proj\sp{\alpha+1,\gamma}_{n-1} \circ \proj\sp{\alpha,\gamma}_{n+1} (\mathcal{D}\psp{\gamma}_j u) - \proj\sp{\alpha,\gamma}_n (\mathcal{D}\psp{\gamma}_j u)\\
= \proj\sp{\alpha+1,\gamma}_{n-1} \circ \proj\sp{\alpha,\gamma}_{n+1} (\mathcal{D}\psp{\gamma}_j u) - \proj\sp{\alpha+1,\gamma}_n \circ \proj\sp{\alpha,\gamma}_n (\mathcal{D}\psp{\gamma}_j u).
\end{multline}
Now, by part \ref{extendedOrthogonality} of \autoref{approx-ineq}, the fact that $\norm[n]{\proj\sp{\alpha+1,\gamma}_{n-1}}_{\mathcal{L} (\LL^2_{\alpha+1,\gamma})} \leq 1$ and the fact that $\norm{\cdot}_{\alpha+1,\gamma} \leq \norm{\cdot}_{\alpha,\gamma}$ in $\LL^2_{\alpha,\gamma}$ (because $W_{\alpha+1,\gamma} \leq W_{\alpha,\gamma}$) we have that, for all $n \geq 1$,
\begin{multline}\label{proj-bound-1}
\norm[n]{\proj\sp{\alpha+1,\gamma}_{n-1} \circ \proj\sp{\alpha,\gamma}_{n+1} (\mathcal{D}\psp{\gamma}_j u)}_{\alpha,\gamma}^2\\
\leq \frac{n + d/2 + \sum_{j=1}^d \gamma_j / 2 + \alpha}{\alpha + 1} \norm[n]{\proj\sp{\alpha,\gamma}_{n+1} (\mathcal{D}\psp{\gamma}_j u)}_{\alpha,\gamma}^2.
\end{multline}
Analogous arguments show that, for all $n \in \natural$,
\begin{multline}\label{proj-bound-2}
\norm[n]{\proj\sp{\alpha+1,\gamma}_n \circ \proj\sp{\alpha,\gamma}_n (\mathcal{D}\psp{\gamma}_j u)}_{\alpha,\gamma}^2\\
\leq \frac{n + 1 + d/2 + \sum_{j=1}^d \gamma_j / 2 + \alpha}{\alpha + 1} \norm[n]{\proj\sp{\alpha,\gamma}_n (\mathcal{D}\psp{\gamma}_j u)}_{\alpha,\gamma}^2.
\end{multline}
Taking the squared $\LL^2_{\alpha,\gamma}$ norm of both ends of \eqref{commute-diff}, exploiting the $\LL_{\alpha,\gamma}^2$ orthogonality of $\mathcal{V}\sp{\alpha+1,\gamma}_{n-1}$ and $\mathcal{V}\sp{\alpha+1,\gamma}_n$ (a consequence of the parity relation \eqref{parity}) and the bounds \eqref{proj-bound-1} and \eqref{proj-bound-2} we observe that
\begin{multline*}
\norm[n]{\mathcal{D}\psp{\gamma}_j S\sp{\alpha,\gamma}_n (u) - S\sp{\alpha,\gamma}_n (\mathcal{D}\psp{\gamma}_j u)}_{\alpha,\gamma}^2\\
\leq \frac{n + 1 +d/2 + \sum_{j=1}^d \gamma_j / 2 + \alpha}{\alpha + 1} \norm[n]{\mathcal{D}\psp{\gamma}_j u - S\sp{\alpha,\gamma}_{n-1} (\mathcal{D}\psp{\gamma}_j u)}_{\alpha,\gamma}^2.
\end{multline*}
As $\mathcal{D}\psp{\gamma}_j u \in \CC^{l-1} (\overline{B^d})$ (cf.\ \autoref{pro:Hadamard}), we can appeal to \autoref{cor:L2-truncation-error} to obtain the desired result for $u \in \CC^l (\overline{B^d})$ after realizing that there exists a constant $\tilde C$ depending only on $\alpha$, $\gamma$ and $l$ such that $\frac{n + 1 +d/2 + \sum_{j=1}^d \gamma_j / 2 + \alpha}{\alpha + 1} (n^{- (l - 1)})^2 \leq \tilde C \, n^{3 - 2 l}$ for all $n \in \natural$.
The general result then follows via density of $\CC^l (\overline{B^d})$ in $\HH^l_{\alpha,\gamma}$ (\autoref{def:DunklSobolevSpace}).
\end{proof}
\end{lemma}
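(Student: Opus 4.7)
The plan is to make the commutator $\mathcal{D}\psp{\gamma}_j S\sp{\alpha,\gamma}_n - S\sp{\alpha,\gamma}_n \mathcal{D}\psp{\gamma}_j$ explicit as a two-term sum of iterated orthogonal projections applied to $\mathcal{D}\psp{\gamma}_j u$, and then bound that sum via the $\LL^2_{\alpha,\gamma}$-approximation result of \autoref{cor:L2-truncation-error} applied to $\mathcal{D}\psp{\gamma}_j u \in \HH^{l-1}_{\alpha,\gamma}$.

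By density (\autoref{def:DunklSobolevSpace}) it is enough to work with $u \in \CC^l(\overline{B^d})$. Combining part \ref{diff-shift} of \autoref{pro:diff-shift} with part \ref{it:id-shift} of \autoref{pro:id-shift} yields a per-$k$ identity
\begin{equation*}
\mathcal{D}\psp{\gamma}_j \proj\sp{\alpha,\gamma}_{k+1}(u) - \proj\sp{\alpha,\gamma}_k(\mathcal{D}\psp{\gamma}_j u) = \proj\sp{\alpha+1,\gamma}_k \circ \proj\sp{\alpha,\gamma}_{k+2}(\mathcal{D}\psp{\gamma}_j u) - \proj\sp{\alpha+1,\gamma}_{k-2} \circ \proj\sp{\alpha,\gamma}_k(\mathcal{D}\psp{\gamma}_j u).
\end{equation*}
Summing for $k = 0, \dotsc, n-1$, the left side collapses to $\mathcal{D}\psp{\gamma}_j S\sp{\alpha,\gamma}_n(u) - S\sp{\alpha,\gamma}_{n-1}(\mathcal{D}\psp{\gamma}_j u)$, while the right side, being a step-two telescoping sum, leaves only the boundary contributions $\proj\sp{\alpha+1,\gamma}_{n-1} \circ \proj\sp{\alpha,\gamma}_{n+1}(\mathcal{D}\psp{\gamma}_j u) + \proj\sp{\alpha+1,\gamma}_{n-2} \circ \proj\sp{\alpha,\gamma}_n(\mathcal{D}\psp{\gamma}_j u)$. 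Replacing $S\sp{\alpha,\gamma}_{n-1}$ with $S\sp{\alpha,\gamma}_n$ on the left contributes an extra $-\proj\sp{\alpha,\gamma}_n(\mathcal{D}\psp{\gamma}_j u)$ which, by part \ref{it:unnamed} of \autoref{pro:id-shift}, cancels the $\proj\sp{\alpha+1,\gamma}_{n-2} \circ \proj\sp{\alpha,\gamma}_n$ contribution and leaves the clean identity
\begin{equation*}
\mathcal{D}\psp{\gamma}_j S\sp{\alpha,\gamma}_n(u) - S\sp{\alpha,\gamma}_n(\mathcal{D}\psp{\gamma}_j u) = \proj\sp{\alpha+1,\gamma}_{n-1} \circ \proj\sp{\alpha,\gamma}_{n+1}(\mathcal{D}\psp{\gamma}_j u) - \proj\sp{\alpha+1,\gamma}_n \circ \proj\sp{\alpha,\gamma}_n(\mathcal{D}\psp{\gamma}_j u).
\end{equation*}

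Squaring the $\LL^2_{\alpha,\gamma}$ norm, the two right-hand terms lie in $\mathcal{V}\sp{\alpha+1,\gamma}_{n-1}$ and $\mathcal{V}\sp{\alpha+1,\gamma}_n$, respectively, and thus have opposite parity by \eqref{parity}; as $W_{\alpha,\gamma}$ is centrally symmetric they are $\LL^2_{\alpha,\gamma}$-orthogonal. For each term, part \ref{extendedOrthogonality} of \autoref{approx-ineq} converts its squared $\LL^2_{\alpha,\gamma}$ norm into a squared $\LL^2_{\alpha+1,\gamma}$ norm at the cost of a factor $O(n)$; dropping the outer projection (an $\LL^2_{\alpha+1,\gamma}$-contraction) and using $\norm{\cdot}_{\alpha+1,\gamma} \leq \norm{\cdot}_{\alpha,\gamma}$ (since $W_{\alpha+1,\gamma} \leq W_{\alpha,\gamma}$) controls the sum by an $O(n)$ multiple of $\norm{\mathcal{D}\psp{\gamma}_j u - S\sp{\alpha,\gamma}_{n-1}(\mathcal{D}\psp{\gamma}_j u)}_{\alpha,\gamma}^2$ via Parseval. \autoref{cor:L2-truncation-error} applied to $\mathcal{D}\psp{\gamma}_j u \in \HH^{l-1}_{\alpha,\gamma}$ with regularity $l-1$ then supplies an $O(n^{-2(l-1)}) \norm{\mathcal{D}\psp{\gamma}_j u}_{\alpha,\gamma;l-1}^2$, so the squared commutator is $O(n^{3-2l}) \norm{\mathcal{D}\psp{\gamma}_j u}_{\alpha,\gamma;l-1}^2$; taking square roots and promoting to $\HH^l_{\alpha,\gamma}$ by density concludes the argument.

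The main obstacle is the bookkeeping in the telescoping: because the two summands in the per-$k$ identity are offset by two rather than one, the sum contributes two boundary terms instead of one, and the final rewrite through part \ref{it:unnamed} of \autoref{pro:id-shift} is needed to produce two factors supported in spaces of opposite parity. The resulting $\LL^2_{\alpha,\gamma}$-orthogonality is what prevents the $O(n)$ parameter-shift factor from \autoref{approx-ineq} from degrading the final exponent beyond $3-2l$.
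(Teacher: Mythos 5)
Your proposal is correct and follows essentially the same route as the paper's proof: the same per-$k$ identity from parts \ref{it:id-shift} of \autoref{pro:id-shift} and \ref{diff-shift} of \autoref{pro:diff-shift}, the same step-two telescoping with the boundary term absorbed via part \ref{it:unnamed} of \autoref{pro:id-shift}, the same parity-based orthogonality of the two residual terms, and the same use of part \ref{extendedOrthogonality} of \autoref{approx-ineq} together with \autoref{cor:L2-truncation-error} applied to $\mathcal{D}\psp{\gamma}_j u$. The bookkeeping you flag as the main obstacle is handled exactly as in the paper.
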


\begin{corollary}\label{grad-projector-commute}
Let $\alpha \in (-1,\infty)$, $\gamma \in (-1,\infty)^d$ and $r , l \in \natural$ with $r \leq l$.
Then, there exists $C = C (\alpha,\gamma,l,r) > 0$ such that, for all $u \in \HH^l_{\alpha,\gamma}$ and $n \in \natural$,
\begin{equation*}
\norm{(\mathcal{D}\psp{\gamma})^r S\sp{\alpha,\gamma}_n (u) - S\sp{\alpha,\gamma}_n ((\mathcal{D}\psp{\gamma})^r u)}_{\alpha,\gamma}
\leq C \, n^{2 r - 1/2 - l} \norm{u}_{\alpha,\gamma;l}.
\end{equation*}
\begin{proof}
Let us first note that iterating part \ref{markov-ineq} of \autoref{approx-ineq} we find that for all $r \in \natural$ there exists $C > 0$ depending on $\alpha$, $\gamma$, and $r$ such that
\begin{equation}\label{iterated-markov}
(\forall \, n \in \natural_0) \ (\forall \, p \in \poly_n^d) \quad \norm[n]{(\mathcal{D}\psp{\gamma})^r p}_{\alpha,\gamma} \leq C \, n^{2 r} \norm{p}_{\alpha,\gamma}.
\end{equation}
We will now operate by induction on $r$.
Taking the square root of the sum with respect to $j$ of the square of both sides of the inequality in \autoref{lem:T-proj-commute} the case $r = 1$ follows almost immediately.
Let us suppose now that our desired result holds for some $r \in \{1,\dotsc,l\}$ and that $r +1 \leq l$.
Then, for all $j \in \{1,\dotsc,d\}$, by the triangle inequality,
\begin{multline*}
\norm{(\mathcal{D}\psp{\gamma})^r \mathcal{D}\psp{\gamma}_j S\sp{\alpha,\gamma}_n (u) - S\sp{\alpha,\gamma}_n ((\mathcal{D}\psp{\gamma})^r \mathcal{D}\psp{\gamma}_j u)}_{\alpha,\gamma}\\
\leq \norm{(\mathcal{D}\psp{\gamma})^r \mathcal{D}\psp{\gamma}_j S\sp{\alpha,\gamma}_n (u) - (\mathcal{D}\psp{\gamma})^r S\sp{\alpha,\gamma}_n (\mathcal{D}\psp{\gamma}_j u )}_{\alpha,\gamma}\\
+ \norm{(\mathcal{D}\psp{\gamma})^r S\sp{\alpha,\gamma}_n (\mathcal{D}\psp{\gamma}_j u) - S\sp{\alpha,\gamma}_n ((\mathcal{D}\psp{\gamma})^r \mathcal{D}\psp{\gamma}_j u)}_{\alpha,\gamma}.
\end{multline*}
By \eqref{iterated-markov} and \autoref{lem:T-proj-commute}, the first term is bounded by an appropriate constant times $n^{2r} n^{3/2 - l} \norm[n]{\mathcal{D}\psp{\gamma}_j u}_{\alpha,\gamma;l-1}$.
By the induction hypothesis and the fact that $\mathcal{D}\psp{\gamma}_j u \in \HH^{l-1}_{\alpha,\gamma}$, the second term is bounded by an appropriate constant times $n^{2r - 1/2 - (l-1)} \norm[n]{\mathcal{D}\psp{\gamma}_j u}_{\alpha,\gamma;l-1}$.
Then, the desired result in the $r+1$ case follows from summing up with respect to $j$ and standard inequalities connecting vector $1$- and $2$-norms.
\end{proof}
\end{corollary}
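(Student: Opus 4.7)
The plan is to prove the estimate by induction on $r$, exactly reusing the commutator bound from \autoref{lem:T-proj-commute} (case $r=1$) and upgrading it by a telescoping triangle inequality.

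Before starting the induction I would record the \emph{iterated Markov inequality}: for each $r \in \natural$ there exists $C = C(\alpha,\gamma,r) > 0$ such that, for every $n \in \natural_0$ and every $p \in \poly^d_n$, $\norm{(\mathcal{D}\psp{\gamma})^r p}_{\alpha,\gamma} \leq C \, n^{2r} \norm{p}_{\alpha,\gamma}$. This follows from iterating part \ref{markov-ineq} of \autoref{approx-ineq} together with the degree-lowering inclusion $\mathcal{D}\psp{\gamma}_j(\poly^d_n) \subseteq \poly^d_{n-1}$ from \eqref{Dunkl-inclusions}, applied componentwise to the Dunkl-gradient tensor.

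The base case $r = 1$ is then essentially \autoref{lem:T-proj-commute}: applying that lemma to each component $j \in \{1,\dotsc,d\}$, squaring and summing gives the target bound $n^{2 \cdot 1 - 1/2 - l} \norm{u}_{\alpha,\gamma;l}$, since $\sum_j \norm{\mathcal{D}\psp{\gamma}_j u}_{\alpha,\gamma;l-1}^2 \leq C \norm{u}_{\alpha,\gamma;l}^2$ by the definition \eqref{DunklSobolevInnerProduct}. For the inductive step, suppose the estimate holds for some $r$ with $r+1 \leq l$. For each $j \in \{1,\dotsc,d\}$ I would split
\begin{multline*}
(\mathcal{D}\psp{\gamma})^r \mathcal{D}\psp{\gamma}_j S\sp{\alpha,\gamma}_n(u) - S\sp{\alpha,\gamma}_n\!\left((\mathcal{D}\psp{\gamma})^r \mathcal{D}\psp{\gamma}_j u\right)\\
= (\mathcal{D}\psp{\gamma})^r\!\left[\mathcal{D}\psp{\gamma}_j S\sp{\alpha,\gamma}_n(u) - S\sp{\alpha,\gamma}_n(\mathcal{D}\psp{\gamma}_j u)\right]
+ \left[(\mathcal{D}\psp{\gamma})^r S\sp{\alpha,\gamma}_n(\mathcal{D}\psp{\gamma}_j u) - S\sp{\alpha,\gamma}_n\!\left((\mathcal{D}\psp{\gamma})^r \mathcal{D}\psp{\gamma}_j u\right)\right]
\end{multline*}
and estimate each bracket separately. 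The first bracket is a polynomial of degree at most $n$ (since $\mathcal{D}\psp{\gamma}_j S\sp{\alpha,\gamma}_n(u), S\sp{\alpha,\gamma}_n(\mathcal{D}\psp{\gamma}_j u) \in \poly^d_n$), so the iterated Markov inequality converts its $(\mathcal{D}\psp{\gamma})^r$-image's norm into $n^{2r}$ times the $\LL^2_{\alpha,\gamma}$ norm of the bracket, which by \autoref{lem:T-proj-commute} is at most $C n^{3/2-l} \norm{\mathcal{D}\psp{\gamma}_j u}_{\alpha,\gamma;l-1}$; the combined exponent is $2r + 3/2 - l = 2(r+1) - 1/2 - l$, matching the target. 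The second bracket is handled directly by the inductive hypothesis applied to $\mathcal{D}\psp{\gamma}_j u \in \HH^{l-1}_{\alpha,\gamma}$, producing the smaller exponent $2r - 1/2 - (l-1) = 2r + 1/2 - l$, which is absorbed into the dominant scale.

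Finally, summing the squared bounds over $j \in \{1,\dotsc,d\}$ and taking square roots (using the elementary comparison between the $1$- and $2$-norms of a $d$-vector) gives the estimate at level $r+1$, with the right-hand side controlled by $\sum_j \norm{\mathcal{D}\psp{\gamma}_j u}_{\alpha,\gamma;l-1} \leq C \norm{u}_{\alpha,\gamma;l}$. There is no real obstacle here beyond careful bookkeeping of the exponents of $n$; the nontrivial analytic work is entirely concentrated in \autoref{lem:T-proj-commute} and the Markov-type inequality, which are both available as black boxes.
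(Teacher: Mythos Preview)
Your proposal is correct and follows essentially the same approach as the paper's proof: induction on $r$, with the base case given by \autoref{lem:T-proj-commute} and the inductive step handled by the same two-term splitting, bounding the first term via the iterated Markov inequality plus \autoref{lem:T-proj-commute} and the second via the induction hypothesis applied to $\mathcal{D}\psp{\gamma}_j u \in \HH^{l-1}_{\alpha,\gamma}$. Your explicit observation that the first bracket lies in $\poly^d_n$ (so that the iterated Markov bound is applicable) is a detail the paper leaves implicit.
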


\begin{proof}[Proof of \autoref{thm:main}]
For every $k \in \{1,\dotsc,r\}$,
\begin{multline*}
\norm{(\mathcal{D}\psp{\gamma})^k u - (\mathcal{D}\psp{\gamma})^k S\sp{\alpha,\gamma}_N (u)}_{\alpha,\gamma}^2\\
\leq 2 \norm{(\mathcal{D}\psp{\gamma})^k u - S\sp{\alpha,\gamma}_N ((\mathcal{D}\psp{\gamma})^k u)}_{\alpha,\gamma}^2 + 2 \norm{S\sp{\alpha,\gamma}_N ((\mathcal{D}\psp{\gamma})^k u) - (\mathcal{D}\psp{\gamma})^k S\sp{\alpha,\gamma}_N (u)}_{\alpha,\gamma}^2\\
\leq C_1 \, (N + 1)^{-2 (l- k)} \sum_{\abs{\beta} = k} \binom{k}{\beta} \norm[n]{\mathcal{D}\psp{\gamma}_\beta u}_{\alpha,\gamma;l-k}^2 + C_2 \, N^{4k - 1 - 2l} \norm{u}_{\alpha,\gamma;l}^2\\
\leq C_3 \, N^{4r - 1 - 2l} \norm{u}_{\alpha,\gamma;l}^2,
\end{multline*}
where we have used \autoref{cor:L2-truncation-error}, \autoref{grad-projector-commute} and $C_1$ and $C_2$ depend on $\alpha$, $\gamma$, $l$ and $k$ only and $C_3$ depends on $\alpha$, $\gamma$, $l$ and $r$ only.
Thus,
\begin{multline*}
\norm{u - S\sp{\alpha,\gamma}_N (u)}_{\alpha,\gamma;r}^2 \leq \left( C_4 \, (N + 1)^{-2l} + r \, C_3 \, N^{4r - 1 - 2l} \right) \norm{u}_{\alpha,\gamma;l}^2\\
\leq C_5 \, N^{4r - 1 - 2l} \norm{u}_{\alpha,\gamma;l}^2,
\end{multline*}
where we have again used \autoref{cor:L2-truncation-error}, $C_4$ depends on $\alpha$, $\gamma$, and $l$ only and $C_5$ depends on $\alpha$, $\gamma$, $l$ and $r$ only.
\end{proof}

\section{On the sharpness of the main result}\label{sec:sharpness}

We will say that our main result, \autoref{thm:main} is \emph{sharp} if the power on the truncation degree $N$ appearing there cannot be lowered.
We refer to \cite[Sec.~5]{Figueroa:2017a} for an account of sharpness results for previous incarnations of our main result, to which we should add that the one-dimensional, Jacobi-weighted variant of \cite[Th.~2.6]{Xu:2018} comes with its own proof of sharpness (for the cases in which, in our notation, $r = l$).

We will the sharpness of our main result for all dimensions $d \in \natural$, natural singularity parameters $\alpha > -1$ and $\gamma \in (-1,\infty)^d$, but restricted to $l = r = 1$.

We will find it easier to work with an alternative norm, equivalent to that of $\HH^1_{\alpha,\gamma}$, as proved in \autoref{pro:H1-equivalence} (see \cite[Lem.~2.6]{Figueroa:2017b} for the corresponding result in the $\gamma = 0$ case).
However, we first need to show that differentiable functions with vanishing Dunkl gradient are constant in $B^d$.

\begin{proposition}\label{pro:dunkl-kernel}
Let $\gamma > -1$, $L>0$, and $p \in \CC^1(-L,L)$ such that
\begin{equation}\label{dunkl-equals-0}
\mathcal{D}\psp{\gamma}_1 p = 0 \quad \text{in $(-L,L)$}.
\end{equation}
Then, $p$ is constant in $(-L,L)$.
\begin{proof}
As $\frac{\Skew p}{x}$ is a always an even function and so is $0$, directly from the definition \eqref{parameter-raising-operator} of $\mathcal{D}\psp{\gamma}_1$, it follows that $p'$ is an even function.
Therefore, $p$ can be expressed as the sum of a constant and an odd function, which also belongs to $\CC^1(-L,L)$.
Hence, $y := \left.\Skew(p)\right|_{(0,L)}$ satisfies the Cauchy--Euler differential equation
\begin{equation*}
x \, y'(x) + \gamma \, y(x) = 0,
\end{equation*}
whence it has the form
\begin{equation*}
y(x) = C \, x^{-\gamma}.
\end{equation*}
As $y$ extends to a $\CC^1(-1,1)$ function, $C$ has to vanish.
\end{proof}
\end{proposition}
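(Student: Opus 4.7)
The plan is to decompose $p$ into its even and odd parts about the origin, $p = \Sym(p) + \Skew(p)$, and handle each part separately using the structure of the one-dimensional Dunkl operator. Writing \eqref{parameter-raising-operator} in the form
\begin{equation*}
\mathcal{D}\psp{\gamma}_1 p(x) = p'(x) + \gamma \, \frac{\Skew(p)(x)}{x},
\end{equation*}
the second term is an even function of $x$ (odd over odd), so the hypothesis forces $p'$ to be even on $(-L,L)$. Since $\Sym(p)'$ is odd and $\Skew(p)'$ is even, evenness of $p'$ yields $\Sym(p)' \equiv 0$, so $\Sym(p)$ is a constant; the problem reduces to showing the odd remainder vanishes.

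For the odd part, set $y := \Skew(p)$. On the half-interval $(0,L)$ we have $\Skew(y) = y$, so the assumption $\mathcal{D}\psp{\gamma}_1 p = 0$ particularizes to the Cauchy--Euler equation
\begin{equation*}
x \, y'(x) + \gamma \, y(x) = 0, \qquad x \in (0,L),
\end{equation*}
whose general solution is $y(x) = C \, x^{-\gamma}$. The main (and only real) obstacle is to argue that the $\CC^1$ regularity at $0$ forces $C = 0$; this is where the assumption $\gamma > -1$ enters. For $\gamma \geq 0$, $x^{-\gamma}$ fails to even be continuous at $0$ (or fails oddness when $\gamma=0$) unless $C=0$; for $-1<\gamma<0$, $x^{-\gamma} \to 0$ at $0$ but its derivative blows up like $x^{-\gamma-1}$, contradicting continuity of $y'$ at $0$ unless again $C=0$. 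In every case, membership of $y$ in $\CC^1$ (as the odd part of $p \in \CC^1(-L,L)$) forces $C = 0$, hence $\Skew(p) \equiv 0$ and $p \equiv \Sym(p)$ is constant.
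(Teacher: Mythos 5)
Your proposal is correct and follows essentially the same route as the paper's own proof: evenness of $p'$ forces the even part of $p$ to be constant, and the odd part restricted to $(0,L)$ solves the Cauchy--Euler equation $x\,y'+\gamma\,y=0$, whose solutions $C\,x^{-\gamma}$ are incompatible with $\CC^1$ regularity (and oddness) at the origin unless $C=0$. Your case analysis on the sign of $\gamma$ merely spells out the final step that the paper states in one line.
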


\begin{proposition}\label{pro:DunklGrad-kernel}
Let $\gamma \in (-1,\infty)^d$ and $p \in \CC^1(B^d)$ such that
\begin{equation*}
\mathcal{D}\psp{\gamma} p = 0 \quad \text{in $B^d$}.
\end{equation*}
Then, $p$ is constant in $B^d$.
\begin{proof}
Given two points in $B^d$, they can be connected via a polygonal path consisting exclusively of segments that are parallel to a coordinate axis.
By applying \autoref{pro:dunkl-kernel} in every segment, it transpires that $p$ is constant along this polygonal path and, in particular, the evaluations of $p$ at the original two points coincide.
\end{proof}
\end{proposition}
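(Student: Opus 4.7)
The strategy is to reduce to the one-dimensional \autoref{pro:dunkl-kernel} by restricting $p$ to axis-parallel lines intersected with $B^d$, and then to link any two points of $B^d$ by a polygonal path whose edges are such segments. Because $B^d$ is reflection-invariant, each such intersection is automatically a symmetric open interval $(-L,L)$, which is precisely the hypothesis of the one-dimensional lemma.

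First I would fix $j\in\{1,\dotsc,d\}$ and $\hat x=(x_1,\dotsc,x_{j-1},x_{j+1},\dotsc,x_d)$ with $\sum_{i\neq j}x_i^2<1$, and set $L=\sqrt{1-\sum_{i\neq j}x_i^2}>0$. The slice map $\phi(t):=p(x_1,\dotsc,x_{j-1},t,x_{j+1},\dotsc,x_d)$ belongs to $\CC^1(-L,L)$, and from the definition \eqref{parameter-raising-operator} one checks directly that
\begin{equation*}
\mathcal{D}\psp{\gamma_j}_1\phi(t)=(\mathcal{D}\psp{\gamma}_j p)(x_1,\dotsc,t,\dotsc,x_d)=0\quad\text{for all } t\in(-L,L).
\end{equation*}
By \autoref{pro:dunkl-kernel}, $\phi$ is constant on $(-L,L)$, so $p$ is constant along every axis-parallel chord of $B^d$.

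Next I would connect any two given points $a,b\in B^d$ to the origin through axis-parallel steps, then chain them. Concretely, starting from $a=(a_1,\dotsc,a_d)$, define
\begin{equation*}
a\psp{0}:=a,\qquad a\psp{k}:=(0,\dotsc,0,a_{k+1},\dotsc,a_d)\quad(k=1,\dotsc,d),
\end{equation*}
so $a\psp{d}=0$. Each consecutive pair $a\psp{k-1}$ and $a\psp{k}$ differs only in the $k$-th coordinate; moreover the whole axis-parallel segment between them lies in $B^d$ because $\sum_{i\neq k}(a\psp{k})_i^2=\sum_{i>k}a_i^2\le\sum_{i=1}^d a_i^2<1$ and the segment is contained in the chord $\{(0,\dotsc,0,t,a_{k+1},\dotsc,a_d):t\in(-L_k,L_k)\}$. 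By the previous paragraph, $p(a\psp{k-1})=p(a\psp{k})$, hence $p(a)=p(0)$. Applying the same to $b$ gives $p(b)=p(0)$, so $p(a)=p(b)$ and $p$ is constant on $B^d$.

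I do not foresee a genuine obstacle: the one-dimensional kernel computation is done in \autoref{pro:dunkl-kernel}, the restriction to lines is mechanical, and the topological step only uses that $B^d$ is a star-shaped (in fact convex) symmetric domain with respect to $0$, which is exactly what lets every intermediate axis-parallel chord stay inside $B^d$. The mildest care needed is to notice that the chord through a point $x$ in the $j$-th direction is automatically symmetric about $0$, so the hypothesis of \autoref{pro:dunkl-kernel} is met without further work.
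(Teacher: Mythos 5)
Your proposal is correct and takes essentially the same route as the paper: reduce to \autoref{pro:dunkl-kernel} on axis-parallel chords (which are automatically symmetric intervals in the relevant coordinate, so the one-dimensional hypothesis is met) and chain two arbitrary points through a polygonal path of such segments. You merely make explicit two details the paper leaves implicit, namely the identification of the restricted $\mathcal{D}\psp{\gamma}_j$ with the one-dimensional Dunkl operator on the slice and the concrete choice of path through the origin.
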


\begin{proposition}\label{pro:H1-equivalence}
The following is an equivalent inner product for $(\CC^1(\overline{B^d}),\langle \cdot,\cdot \rangle_{\alpha,\gamma;1})$.
\begin{equation}\label{special-IP-m}
\langle u, v \rangle_{\alpha, \gamma; 1, \mathrm{P}} := \langle \mathcal{D}\psp{\gamma} u, \mathcal{D}\psp{\gamma} v \rangle_{\alpha,\gamma} + \langle S^{\alpha,\gamma}_0(u), S^{\alpha,\gamma}_0(v) \rangle_{\alpha,\gamma}.
\end{equation}
Therefore the topological completion of $(\CC^1(\overline{B^d}), \langle \cdot,\cdot \rangle_{\alpha,\gamma;1,\mathrm{P}})$ equals $\HH^1_{\alpha,\gamma}$, with the extension of $\langle \cdot,\cdot \rangle_{\alpha,\gamma;1,\mathrm{P}}$ to $\HH^1_{\alpha,\gamma}$ (cf.~\autoref{def:DunklSobolevSpace}) being an equivalent inner product.
\begin{proof}
$\langle \cdot,\cdot \rangle_{\alpha,\gamma;1,\mathrm{P}}$ being an inner product is a direct consequence of \autoref{pro:DunklGrad-kernel}.
Clearly, $\norm{\cdot}_{\alpha,\gamma;1,\mathrm{P}} \leq \norm{\cdot}_{\alpha,\gamma,1}$.

We will now prove the converse bound.
Let $u \in \CC^1(\overline{B^d})$.
Given $N \in \natural$, by Parseval's identity \eqref{parseval-L2},
\begin{equation}\label{parseval-decomposed}
\norm{u}_{\alpha,\gamma}^2
= \norm{S\sp{\alpha,\gamma}_N u}_{\alpha,\gamma}^2 + \sum_{n=N+1}^\infty \norm{ \proj\sp{\alpha,\gamma}_n(u) }_{\alpha,\gamma}^2.
\end{equation}
As $\poly^d_N$ is finite dimensional, there exists a positive constant $C > 0$, depending only on $N$, $\alpha$ and $\gamma$, such that
\begin{equation*}
\left( \forall \, p \in \poly^d_N \right) \quad \norm{p}_{\alpha,\gamma}^2 \leq C \left( \norm{S\sp{\alpha,\gamma}_0 p}_{\alpha,\gamma}^2 + \norm{\mathcal{D}\psp{\gamma} p}_{\alpha+1,\gamma}^2 \right).
\end{equation*}
In particular, with $p = S\sp{\alpha,\gamma}_N u$ and using part \ref{diff-shift} of \autoref{pro:diff-shift}, we have
\begin{multline}\label{finite-dimensional-bound}
\norm{S\sp{\alpha,\gamma}_N u}_{\alpha,\gamma}^2 \leq C \left( \norm{S\sp{\alpha,\gamma}_0 S\sp{\alpha,\gamma}_N u}_{\alpha,\gamma}^2 + \norm{\mathcal{D}\psp{\gamma} S\sp{\alpha,\gamma}_N u}_{\alpha+1,\gamma}^2 \right)\\
= C \left( \norm{S\sp{\alpha,\gamma}_0 u}_{\alpha,\gamma}^2 + \norm{S\sp{\alpha+1,\gamma}_{N-1} \mathcal{D}\psp{\gamma} u}_{\alpha+1,\gamma}^2 \right)
\leq C \left( \norm{S\sp{\alpha,\gamma}_0 u}_{\alpha,\gamma}^2 + \norm{\mathcal{D}\psp{\gamma} u}_{\alpha+1,\gamma}^2 \right) .
\end{multline}
In turn, as $\proj\sp{\alpha,\gamma}_n(u) \in \mathcal{V}\sp{\alpha,\gamma}_n$, by \eqref{weak-Sturm-Liouville}, \eqref{Sturm-Liouville-bilinear-form}, part \ref{diff-shift} of \autoref{pro:diff-shift}, part \ref{it:angular-proj-commute} of \autoref{pro:angular-Dunkl-adjoint} and taking into account that $\norm{\Skew_i \cdot}_{\alpha,\gamma} \leq \norm{\cdot}_{\alpha,\gamma}$ for all $i \in \{1,\dotsc,d\}$, we obtain
\begin{multline}\label{proj-bound}
n(n+2\lambda^{\alpha,\gamma}) \norm{ \proj\sp{\alpha,\gamma}_n(u) }_{\alpha,\gamma}^2
= B(\proj\sp{\alpha,\gamma}_n(u), \proj\sp{\alpha,\gamma}_n(u))\\
= \norm{\mathcal{D}\psp{\gamma} \proj\sp{\alpha,\gamma}_n(u)}_{\alpha+1,\gamma}^2 + \sum_{1 \leq i < j \leq d} \norm{\mathcal{D}\psp{\gamma}_{i,j} \proj\sp{\alpha,\gamma}_n(u)}_{\alpha,\gamma}^2\\
- 2\lambda^{\alpha,\gamma} \sum_{i=1}^d \gamma_i \norm{\Skew_i \proj\sp{\alpha,\gamma}_n(u)}_{\alpha,\gamma}^2 + \sum_{i,j=1}^d \gamma_i \gamma_j \norm{\Skew_i \Skew_j \proj\sp{\alpha,\gamma}_n(u)}_{\alpha,\gamma}^2\\
\leq \norm{\proj\sp{\alpha+1,\gamma}_{n-1} \mathcal{D}\psp{\gamma}(u)}_{\alpha+1,\gamma}^2 + \sum_{1 \leq i < j \leq d} \norm{\proj\sp{\alpha,\gamma}_n \mathcal{D}\psp{\gamma}_{i,j}(u)}_{\alpha,\gamma}^2 + \tilde C \norm{\proj\sp{\alpha,\gamma}_n(u)}_{\alpha,\gamma}^2,
\end{multline}
where $\tilde C = \tilde C(\alpha,\gamma) := 2\abs{\lambda^{\alpha,\gamma}} \sum_{i=1}^d \abs{\gamma_i} + \sum_{i,j=1}^d \abs{\gamma_i \gamma_j}$.
Let us now fix $N \in \natural$ to any value which ensures that $\tilde C < n(n+2\lambda^{\alpha,\gamma})$ for all $n > N$.
Then, combining \eqref{parseval-decomposed}, \eqref{finite-dimensional-bound} and \eqref{proj-bound} and using Parseval's identity \eqref{parseval-L2} again, we obtain
\begin{multline*}
\norm{u}_{\alpha,\gamma}^2
\leq C \left( \norm{S\sp{\alpha,\gamma}_0 u}_{\alpha,\gamma}^2 + \norm{\mathcal{D}\psp{\gamma} u}_{\alpha+1,\gamma}^2 \right)\\
+ \sup_{n > N} \frac{1}{n(n+2\lambda^{\alpha,\gamma})-\tilde C} \left[ \norm{\mathcal{D}\psp{\gamma} u}_{\alpha+1,\gamma}^2 + \sum_{1 \leq i < j \leq d} \norm{\mathcal{D}\psp{\gamma}_{i,j} u}_{\alpha,\gamma}^2 \right].
\end{multline*}
The result follows upon using the bounds $\norm{\cdot}_{\alpha+1,\gamma} \leq \norm{\cdot}_{\alpha,\gamma}$ and $\norm{\mathcal{D}\psp{\gamma}_{i,j} \cdot}_{\alpha,\gamma}^2 \leq 2 \norm{\mathcal{D}\psp{\gamma} \cdot}_{\alpha,\gamma}^2$.
\end{proof}
\end{proposition}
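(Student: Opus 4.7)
The plan is to verify that \eqref{special-IP-m} is an inner product on $\CC^1(\overline{B^d})$ (the only nontrivial axiom being positive definiteness) and then establish the two-sided bound between $\norm{\cdot}_{\alpha,\gamma;1,\mathrm{P}}$ and $\norm{\cdot}_{\alpha,\gamma;1}$; the equivalence of the completions is then immediate from the standard fact that completions of equivalent normed spaces coincide up to isomorphism of Hilbert spaces. For positive definiteness I would argue that if $\norm{u}_{\alpha,\gamma;1,\mathrm{P}} = 0$ for some $u \in \CC^1(\overline{B^d})$, then $\mathcal{D}\psp{\gamma} u \equiv 0$ in $B^d$, whence by \autoref{pro:DunklGrad-kernel} $u$ is constant; the vanishing of the other term then forces this constant to be $0$. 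The direction $\norm{u}_{\alpha,\gamma;1,\mathrm{P}} \leq \norm{u}_{\alpha,\gamma;1}$ is immediate, since $S\sp{\alpha,\gamma}_0$ is an orthogonal projector in $\LL^2_{\alpha,\gamma}$, so $\norm{S\sp{\alpha,\gamma}_0 u}_{\alpha,\gamma} \leq \norm{u}_{\alpha,\gamma}$.

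For the reverse bound, the idea is to write $u = \sum_{n=0}^\infty \proj\sp{\alpha,\gamma}_n(u)$ via Parseval's identity \eqref{parseval-L2} and split the sum at some fixed threshold $N \in \natural$, which will be chosen below. For the low-frequency part $S\sp{\alpha,\gamma}_N u \in \poly^d_N$, I would invoke the finite-dimensionality of $\poly^d_N$ to conclude that the seminorm $p \mapsto \bigl(\norm{S\sp{\alpha,\gamma}_0 p}_{\alpha,\gamma}^2 + \norm{\mathcal{D}\psp{\gamma} p}_{\alpha+1,\gamma}^2\bigr)^{1/2}$ is in fact a norm equivalent to $\norm{\cdot}_{\alpha,\gamma}$; indeed its vanishing on $p \in \poly^d_N$ forces $p$ to be a constant (by \autoref{pro:DunklGrad-kernel}) whose zeroth projection is zero. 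Then, to pass back to $u$, I would use the commutation in part \ref{diff-shift} of \autoref{pro:diff-shift} to slide the truncation projector past $\mathcal{D}\psp{\gamma}$ and then drop it using that $\proj\sp{\alpha+1,\gamma}_{\leq N-1}$ has operator norm at most $1$ on $\LL^2_{\alpha+1,\gamma}$.

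For the high-frequency tail $\sum_{n > N} \norm{\proj\sp{\alpha,\gamma}_n(u)}_{\alpha,\gamma}^2$, I would apply the weak Sturm--Liouville identity \eqref{weak-Sturm-Liouville} to $p_n := \proj\sp{\alpha,\gamma}_n(u) \in \mathcal{V}\sp{\alpha,\gamma}_n$, expanding $B(p_n, p_n)$ via \eqref{Sturm-Liouville-bilinear-form} and bounding the possibly sign-indefinite $\Skew$-involving terms in absolute value by a fixed constant $\tilde C = \tilde C(\alpha,\gamma)$ times $\norm{p_n}_{\alpha,\gamma}^2$, using the obvious $\LL^2_{\alpha,\gamma}$-boundedness of the $\Skew_i$ operators. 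This yields
\begin{equation*}
\bigl(n(n+2\lambda^{\alpha,\gamma}) - \tilde C\bigr)\norm{p_n}_{\alpha,\gamma}^2
\leq \norm{\mathcal{D}\psp{\gamma} p_n}_{\alpha+1,\gamma}^2 + \sum_{1 \leq i < j \leq d} \norm{\mathcal{D}\psp{\gamma}_{i,j} p_n}_{\alpha,\gamma}^2.
\end{equation*}
Fixing $N$ so large that $n(n+2\lambda^{\alpha,\gamma}) - \tilde C \geq 1$ for all $n > N$, using part \ref{diff-shift} of \autoref{pro:diff-shift} and part \ref{it:angular-proj-commute} of \autoref{pro:angular-Dunkl-adjoint} to replace $\mathcal{D}\psp{\gamma} p_n$ and $\mathcal{D}\psp{\gamma}_{i,j} p_n$ by projections of $\mathcal{D}\psp{\gamma} u$ and $\mathcal{D}\psp{\gamma}_{i,j} u$, respectively, and then summing over $n > N$ and applying Parseval one more time, the tail gets controlled by $\norm{\mathcal{D}\psp{\gamma} u}_{\alpha+1,\gamma}^2 + \sum_{i<j}\norm{\mathcal{D}\psp{\gamma}_{i,j} u}_{\alpha,\gamma}^2$.

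The main obstacle will be the presence of the angular Dunkl operators $\mathcal{D}\psp{\gamma}_{i,j}$, which are not among the terms that appear in $\norm{\cdot}_{\alpha,\gamma;1,\mathrm{P}}$. To dispose of them I would use the definition \eqref{angular-Dunkl}: $\mathcal{D}\psp{\gamma}_{i,j} u = x_i \mathcal{D}\psp{\gamma}_j u - x_j \mathcal{D}\psp{\gamma}_i u$, together with $\abs{x_i}, \abs{x_j} \leq 1$ in $B^d$, to conclude $\norm{\mathcal{D}\psp{\gamma}_{i,j} u}_{\alpha,\gamma}^2 \leq 2\norm{\mathcal{D}\psp{\gamma} u}_{\alpha,\gamma}^2$; combined with $\norm{\cdot}_{\alpha+1,\gamma} \leq \norm{\cdot}_{\alpha,\gamma}$ (because $W_{\alpha+1,\gamma} \leq W_{\alpha,\gamma}$), this reduces every right-hand side above to a bounded multiple of $\norm{S\sp{\alpha,\gamma}_0 u}_{\alpha,\gamma}^2 + \norm{\mathcal{D}\psp{\gamma} u}_{\alpha,\gamma}^2 = \norm{u}_{\alpha,\gamma;1,\mathrm{P}}^2$, completing the proof.
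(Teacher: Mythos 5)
Your proposal is correct and follows essentially the same route as the paper's proof: positive definiteness via \autoref{pro:DunklGrad-kernel}, a Parseval split at a threshold $N$, a finite-dimensionality argument for the low modes combined with the commutation in part \ref{diff-shift} of \autoref{pro:diff-shift}, the weak Sturm--Liouville identity with the $\Skew$-terms absorbed into a constant $\tilde C$ for the tail, and the final disposal of the angular operators via $\norm{\mathcal{D}\psp{\gamma}_{i,j} u}_{\alpha,\gamma}^2 \leq 2 \norm{\mathcal{D}\psp{\gamma} u}_{\alpha,\gamma}^2$. The only (welcome) addition is that you spell out why the low-mode seminorm is actually a norm on $\poly^d_N$, which the paper leaves implicit.
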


We can now prove our sharpness result.

\begin{thm}\label{thm:sharpness}
For all $\alpha > -1$ and $\gamma \in (-1,\infty)^d$, \autoref{thm:main} is sharp in the case $l=r=1$.
\begin{proof}
Let $P\sp{(\alpha,\beta)}_n$ denote the Jacobi polynomial of parameter $(\alpha,\beta)$ and degree $n$ \cite[Ch.~IV]{Szego:1975}.
From \cite[Eqs.~(4.21.7) and (4.3.3)]{Szego:1975} and \cite[Eq.~(6.4.21)]{AAR:1999},
\begin{equation}\label{jacobi-diff}
{P\sp{(\alpha,\beta)}_n}'(x) = \frac{n+\alpha+\beta+1}{2} P\sp{(\alpha+1,\beta+1)}_{n-1}(x),
\end{equation}
\begin{equation}\label{jacobi-norm}
\begin{aligned}
h\sp{(\alpha,\beta)}_n & := \int_{-1}^1 \abs{P\sp{(\alpha,\beta)}_n(x)}^2 (1-x)^\alpha (1+x)^\beta \dd x\\
& = \frac{2^{\alpha+\beta+1}}{2n+\alpha+\beta+1} \frac{\Gamma(n+\alpha+1)\Gamma(n+\beta+1)}{\Gamma(n+1)\Gamma(n+\alpha+\beta+1)},
\end{aligned}
\end{equation}
\begin{equation}\label{jacobi-idshift}
P\sp{(\alpha,\beta)}_n(x) = \frac{n+\alpha+\beta+1}{2n+\alpha+\beta+1} P\sp{(\alpha+1,\beta)}_n(x) - \frac{n+\beta}{2n+\alpha+\beta+1} P\sp{(\alpha+1,\beta)}_{n-1}(x);
\end{equation}
the last expression in \eqref{jacobi-norm} must be modified if $n=0$.
Let us adopt the abbreviation $s(\gamma) = \sum_{j=1}^d \gamma_j$.
Given $n \in \natural$, we define $t_{\alpha,\gamma,n} \in \poly^d_{2n}$ by
\begin{multline}\label{sharp-poly}
t_{\alpha,\gamma,n}(x) := \frac{2n+2\lambda^{\alpha,\gamma}-2}{4n+2\lambda^{\alpha,\gamma}-2} P\sp{(\alpha,\frac{1}{2}s(\gamma) + \frac{d-2}{2})}_n (2\norm{x}^2-1)\\
- \frac{2n+s(\gamma)+d-2}{4n+2\lambda^{\alpha,\gamma}-2} P\sp{(\alpha,\frac{1}{2}s(\gamma) + \frac{d-2}{2})}_{n-1} (2\norm{x}^2-1).
\end{multline}
From \cite[Prop.~8.1.5]{DunklXu:2014}, we learn that the first term defining $t_{\alpha,\gamma,n}$ in \eqref{sharp-poly} is a member of $\mathcal{V}\sp{\alpha,\gamma}_{2n}$ and the second is a member of $\mathcal{V}\sp{\alpha,\gamma}_{2n-2}$.
Therefore
\begin{multline}\label{sharp-residue}
R_{\alpha,\gamma,n}(x) := t_{\alpha,\gamma,n} - S\sp{\alpha,\gamma}_{2n-1}(t_{\alpha,\gamma,n})(x)\\
= \frac{2n+2\lambda^{\alpha,\gamma}-2}{4n+2\lambda^{\alpha,\gamma}-2} P\sp{(\alpha,\frac{1}{2}s(\gamma) + \frac{d-2}{2})}_n (2\norm{x}^2-1).
\end{multline}
As $R_{\alpha,\gamma,n}$ is a radial member of $\mathcal{V}\sp{\alpha,\gamma}_{2n}$, from part \ref{grad-estimation} of \autoref{approx-ineq},
\begin{equation}\label{sharp-residue-grad-norm}
\norm{\mathcal{D}\psp{\gamma} R_{\alpha,\gamma,n}}_{\alpha,\gamma}^2 = \frac{2n(2n+2\lambda^{\alpha,\gamma})(2n+\lambda^{\alpha,\gamma})}{\alpha+1} \norm{R_{\alpha,\gamma,n}}_{\alpha,\gamma}^2.
\end{equation}
Also,
\begin{equation}\label{sharp-residue-norm}
\begin{aligned}
\norm{R_{\alpha,\gamma,n}}_{\alpha,\gamma}^2 & = \frac{(2n+2\lambda^{\alpha,\gamma}-2)^2}{(4n+2\lambda^{\alpha,\gamma}-2)^2} \int_{B^d} \abs{P\sp{(\alpha,\frac{1}{2}s(\gamma) + \frac{d-2}{2})}_n (2\norm{x}^2-1)}^2 W_{\alpha,\gamma}(x) \dd x\\
& = \frac{(2n+2\lambda^{\alpha,\gamma}-2)^2}{(4n+2\lambda^{\alpha,\gamma}-2)^2} 2^{-(2+\alpha+\frac{1}{2}s(\gamma) + \frac{d-2}{2})} h_n\sp{\left( \alpha, \frac{1}{2} s(\gamma) + \frac{d-2}{2} \right)} \abs{\sphere^{d-1}}_\gamma,
\end{aligned}
\end{equation}
where $\abs{\sphere^{d-1}}_\gamma := \int_{\sphere^{d-1}} W_{0,\gamma}(x) \dd S(x)$; the integral was computed by first switching to generalized spherical coordinates and then performing the change of variable $t = 2r^2-1$.
Given $j \in \{1,\dotsc,d\}$,
\begin{align*}
\mathcal{D}\psp{\gamma}_j t_{\alpha,\gamma,n}(x) & \stackrel{\eqref{jacobi-diff}}{=} \frac{2n+2\lambda^{\alpha,\gamma}-2}{4n+2\lambda^{\alpha,\gamma}-2} x_j \Bigg[ (2n+2\lambda^{\alpha,\gamma} ) P\sp{(\alpha+1,\frac{1}{2}s(\gamma) +\frac{d}{2})}_{n-1}(2\norm{x}^2-1)\\
& \quad -(2n+s(\gamma) +d-2) P\sp{(\alpha+1,\frac{1}{2}s(\gamma) +\frac{d}{2})}_{n-2}(2\norm{x}^2-1) \Bigg]\\
& \stackrel{\eqref{jacobi-idshift}}{=} (2n+2\lambda^{\alpha,\gamma}-2) \, x_j \, P\sp{(\alpha,\frac{1}{2}s(\gamma) +\frac{d}{2})}_{n-1}(2\norm{x}^2-1).
\end{align*}
Hence,
\begin{equation}\label{sharp-poly-grad-norm}
\begin{split}
\norm{\mathcal{D}\psp{\gamma} t_{\alpha,\gamma,n}}_{\alpha,\gamma}^2 & = (2n+2\lambda^{\alpha,\gamma}-2)^2 \int_{B^d} \norm{x}^2 \abs{P\sp{(\alpha,\frac{1}{2}s(\gamma) +\frac{d}{2})}_{n-1}(2\norm{x}^2-1)}^2 W_{\alpha,\gamma}(x) \dd x\\
& = (2n+2\lambda^{\alpha,\gamma}-2)^2 2^{-(2+\alpha+\frac{1}{2} s(\gamma) + \frac{d}{2})} h_{n-1}\sp{(\alpha,\frac{1}{2} s(\gamma)+\frac{d}{2})} \abs{\sphere^{d-1}}_\gamma,
\end{split}
\end{equation}
where the integral over $B^d$ was computed similarly to that in \eqref{sharp-residue-norm}.
Therefore, for $n \geq 2$,
\begin{multline}\label{sharp-ratio-grad-norm}
\frac{\norm{\mathcal{D}\psp{\gamma} R_{\alpha,\gamma,n}}_{\alpha,\gamma}^2}{\norm{\mathcal{D}\psp{\gamma} t_{\alpha,\gamma,n}}_{\alpha,\gamma}^2} \stackrel{\eqref{sharp-residue-grad-norm}}{=} \frac{2n(2n+2\lambda^{\alpha,\gamma})(2n+\lambda^{\alpha,\gamma})}{\alpha+1} \frac{\norm{R_{\alpha,\gamma,n}}_{\alpha,\gamma}^2}{\norm{\mathcal{D}\psp{\gamma} t_{\alpha,\gamma,n}}_{\alpha,\gamma}^2}\\
\stackrel{\eqref{sharp-residue-norm},\eqref{sharp-poly-grad-norm}}{=} \frac{2n(2n+2\lambda^{\alpha,\gamma})(2n+\lambda^{\alpha,\gamma})}{\alpha+1} \frac{2 h_n\sp{\left( \alpha, \frac{1}{2} s(\gamma) + \frac{d-2}{2} \right)}}{(4n+2\lambda^{\alpha,\gamma}-2)^2 h_{n-1}\sp{(\alpha,\frac{1}{2} s(\gamma)+\frac{d}{2})}}\\
\stackrel{\eqref{jacobi-norm}}{=} \frac{4n(2n+2\lambda^{\alpha,\gamma})(2n+\lambda^{\alpha,\gamma})}{(\alpha+1)(4n+2\lambda^{\alpha,\gamma}-2)^2} \frac{(2n+\lambda^{\alpha,\gamma}-1) \Gamma(n+\alpha+1)\Gamma(n)}{2(2n+\lambda^{\alpha,\gamma})\Gamma(n+\alpha)\Gamma(n+1)}\\
= \frac{(2n+2\lambda^{\alpha,\gamma})(n+\alpha)}{(\alpha+1)(4n+2\lambda^{\alpha,\gamma}-2)} \sim \frac{2n-1}{4(\alpha+1)} \quad \text{as $n \to \infty$},
\end{multline}
where we have exploited the identity $\Gamma(z+1) = z \Gamma(z)$ and we use $\sim$ to denote that the ratio of two expressions thus linked tends to $1$.
As $u \mapsto \norm{\mathcal{D}\psp{\gamma} u}_{\alpha,\gamma} + \norm{S\sp{\alpha,\gamma}_0(u)}_{\alpha,\gamma}$ is an equivalent norm for $\HH^1_{\alpha,\gamma}$ (cf.~\autoref{pro:H1-equivalence}) and both $t_{\alpha,\gamma,n}$ and $R_{\alpha,\gamma,n}$ are $\LL^2_{\alpha,\gamma}$-orthogonal to $\mathcal{V}\sp{\alpha,\gamma}_0$ if $n \geq 2$, we infer from \eqref{sharp-ratio-grad-norm} that there exists a positive constant $C$ depending on $d$, $\alpha$ and $\gamma$ only such that
\begin{equation*}
\lim_{n\to\infty} \frac{\norm{t_{\alpha,\gamma,n} - S\sp{\alpha,\gamma}_{2n-1}(t_{\alpha,\gamma,n})}_{\alpha,\gamma;1}}{\norm{t_{\alpha,\gamma,n}}_{\alpha,\gamma;1} (2n-1)^{1/2}} = C.
\end{equation*}
Thus, the $l=r=1$ instance of \autoref{thm:main} is sharp, because otherwise the left-hand side limit would vanish.
\end{proof}
\end{thm}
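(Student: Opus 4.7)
The plan is to exhibit, for each large $n$, an explicit polynomial $t_n \in \poly^d_{2n}$ of radial type such that the ratio $\norm{t_n - S\sp{\alpha,\gamma}_{2n-1}(t_n)}_{\alpha,\gamma;1} / \norm{t_n}_{\alpha,\gamma;1}$ grows like $n^{1/2}$, which matches the $N^{1/2}$ rate produced by \autoref{thm:main} in the $l = r = 1$ case (where $-1/2 + 2r - l = 1/2$) and therefore forbids lowering the exponent. Restricting to radial polynomials is the crucial simplification: the radial $\LL^2_{\alpha,\gamma}$-orthogonal polynomials are available in closed form as Jacobi polynomials composed with $2\norm{x}^2-1$ (see \cite[Prop.~8.1.5]{DunklXu:2014}), and part \ref{grad-estimation} of \autoref{approx-ineq} becomes an equality (with $M_{\alpha,\gamma}$ replaced by $0$) on radial members of $\mathcal{V}\sp{\alpha,\gamma}_k$, so every constant is trackable.

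More concretely, with $\beta = \frac{1}{2}\sum_{j=1}^d \gamma_j + \frac{d-2}{2}$, I would define $t_n$ as a specific linear combination of the two radial polynomials $x \mapsto P_n^{(\alpha,\beta)}(2\norm{x}^2-1)$ and $x \mapsto P_{n-1}^{(\alpha,\beta)}(2\norm{x}^2-1)$. These summands respectively lie in $\mathcal{V}\sp{\alpha,\gamma}_{2n}$ and $\mathcal{V}\sp{\alpha,\gamma}_{2n-2}$, so truncation at degree $2n-1$ only strips off the first one, and the residue $R_n := t_n - S\sp{\alpha,\gamma}_{2n-1}(t_n)$ is a purely radial element of $\mathcal{V}\sp{\alpha,\gamma}_{2n}$. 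The coefficients of the linear combination must be chosen via the Jacobi identity-shift relation so that, after the Jacobi differentiation formula is applied, each Dunkl derivative $\mathcal{D}\psp{\gamma}_j t_n$ collapses to a single product $c_n \, x_j \, P_{n-1}^{(\alpha,\beta+1)}(2\norm{x}^2 - 1)$ with an explicit $c_n$; this collapse is what makes the whole computation tractable.

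With the construction in hand, part \ref{grad-estimation} of \autoref{approx-ineq} gives $\norm{\mathcal{D}\psp{\gamma} R_n}_{\alpha,\gamma}^2$ as an explicit multiple of $\norm{R_n}_{\alpha,\gamma}^2$, and $\norm{R_n}_{\alpha,\gamma}^2$ reduces via the polar integration formula \eqref{polarIntegral} and the substitution $t = 2r^2-1$ to a tabulated one-dimensional Jacobi $\LL^2$ integral. The same polar reduction applied to the closed form of $\mathcal{D}\psp{\gamma}_j t_n$ yields $\norm{\mathcal{D}\psp{\gamma} t_n}_{\alpha,\gamma}^2$ as another Jacobi norm, differing in parameters and in a multiplicative factor. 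Taking the ratio $\norm{\mathcal{D}\psp{\gamma} R_n}_{\alpha,\gamma}^2 / \norm{\mathcal{D}\psp{\gamma} t_n}_{\alpha,\gamma}^2$ and simplifying with $\Gamma(z+1) = z\,\Gamma(z)$ I expect to obtain an asymptotic equivalence of order $n/(\alpha+1)$, so the Dunkl-gradient ratio grows like $n^{1/2}$.

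To translate this into the $\HH^1_{\alpha,\gamma}$ norm, I will invoke \autoref{pro:H1-equivalence}: for $n \geq 2$, both $t_n$ and $R_n$ live in $\bigoplus_{k \geq 2} \mathcal{V}\sp{\alpha,\gamma}_k$ and are therefore $\LL^2_{\alpha,\gamma}$-orthogonal to $\mathcal{V}\sp{\alpha,\gamma}_0$, so the $S\sp{\alpha,\gamma}_0$ term in the equivalent inner product drops from both numerator and denominator, and the $\HH^1_{\alpha,\gamma}$ ratio coincides, up to universal constants, with the Dunkl-gradient ratio already computed. Writing $N = 2n-1$ this produces a lower bound of order $N^{1/2}$ for the $\HH^1$ projection error, proving sharpness. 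The main obstacle will be the algebraic step in the second paragraph: the coefficients of $t_n$ must be calibrated so that the Jacobi identity-shift and differentiation identities dovetail exactly and make $\mathcal{D}\psp{\gamma}_j t_n$ collapse to a single Jacobi polynomial times $x_j$; without that collapse, $\norm{\mathcal{D}\psp{\gamma} t_n}_{\alpha,\gamma}^2$ would carry cross terms whose explicit evaluation seems out of reach with the tools at hand.
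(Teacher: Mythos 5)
Your proposal follows essentially the same route as the paper's own proof: the same radial test functions $t_{\alpha,\gamma,n}$ built from $P_n^{(\alpha,\beta)}(2\norm{x}^2-1)$ and $P_{n-1}^{(\alpha,\beta)}(2\norm{x}^2-1)$ with $\beta = \tfrac12 s(\gamma)+\tfrac{d-2}{2}$ and coefficients calibrated so that the Jacobi differentiation and identity-shift formulas make $\mathcal{D}\psp{\gamma}_j t_n$ collapse to a single term $c_n\,x_j\,P_{n-1}^{(\alpha,\beta+1)}(2\norm{x}^2-1)$, the same appeal to the radial equality case of part \ref{grad-estimation} of \autoref{approx-ineq}, the same polar-coordinate reduction to one-dimensional Jacobi norms, and the same final invocation of \autoref{pro:H1-equivalence}. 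The step you flag as the main obstacle (the exact calibration of the two coefficients) is precisely what the paper carries out in \eqref{sharp-poly}, and the asymptotics you anticipate, a squared ratio of order $n/(\alpha+1)$, agree with the paper's $\sim (2n-1)/(4(\alpha+1))$.
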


\subsection*{Conclusion}

We have proved our mismatched approximation result \autoref{thm:main} and its sharpness for special values of the regularity parameters of the function being approximated and the norm used to measure the error.
On the way, we developed a suite of auxiliary results connecting Dunkl operators and $\LL^2_{\alpha,\gamma}$-orthogonal polynomials.

Starting from this work, some avenues of further work that we detect are:
(1) Adapting our arguments to weights invariant with respect to other reflection groups.
(2) Find analogues of Dunkl operators that raise or lower components of $\gamma$ instead of $\alpha$.
(3) Explore how \autoref{thm:main} fares under polynomial-preserving mappings to other domains.

\bibliographystyle{amsplain}
\bibliography{opriw-refs}

\end{document}